\theoremstyle{plain} % definition 
\newtheorem{lemma}[equation]{Lemma} 
\newtheorem{proposition}[equation]{Proposition} 
\newtheorem{theorem}[equation]{Theorem} 
\newtheorem{corollary}[equation]{Corollary} 
\newtheorem{example}[equation]{Example}
\newtheorem*{TheoremA}{Theorem A}
\newtheorem*{TheoremB}{Theorem B}
\theoremstyle{definition}
\theoremstyle{remark}
\newtheorem{remark}[equation]{Remark}
\newcommand{\ContainC}[1]{\Bigl( #1 \Bigr)}
\newcommand{\RH}{RH} % Reverse Holder class macro
\newcommand{\f}{\frac}
\def\barint{\kern4pt
\raise3.4pt\hbox{\vrule height.8pt width5pt}%
\kern-9pt % -(4pt + 5pt)
\int}
\numberwithin{equation}{section}
\newcommand{\unit}{1\!\!1}
\newcommand{\norm}[1]{\ensuremath{\left\|#1\right\|}}
\newcommand{\abs}[1]{\ensuremath{\left\vert#1\right\vert}}
\newcommand{\R}{\mathbb{R}}
\newcommand{\avg}[1]{\langle #1 \rangle}
\newcommand{\bmo}{\textnormal{BMO}}
\newcommand{\BMO}{\textnormal{BMO}}
\newcommand{\bp}[1]{\widetilde{\psi}_{#1}}
\newcommand{\rh}{\textnormal{RH}}
\newcommand{\al}{\alpha}
\newcommand{\pp}[1]{\psi_{#1}}
\title[Quantitative Estimates in the Schr\"odinger Setting] {$A_p$ weights and Quantitative Estimates in the Schr\"odinger Setting}
 \subjclass[2010]{Primary:42B20,42B25  Secondary: 47F05}
\keywords{Schr\"odinger Operator, Weighted Inequalities, Fractional Integral Operator}
\author[J. Li]{Ji Li}
\address{J. Li, Department of Mathematics, Macquarie University, NSW, 2019, Australia} 
\email{ji.li@mq.edu.au}
\author[R. Rahm]{Robert Rahm}
\address{R. Rahm, Department of Mathematics, Washington University - St. Louis, St. Louis, MO 63130-4899 USA} 
\email{rahm@math.wustl.edu}
\author[B.D. Wick]{Brett D. Wick}
\address{B. D. Wick, Department of Mathematics, Washington University - St. Louis, St. Louis, MO 63130-4899 USA} 
\email{wick@math.wustl.edu}
\thanks{B.D. Wick's research is supported by National Science 
Foundation grant DMS \# 1560955.  J. Li's research supported in part by 
ARC DP 160100153 and a Macquarie University New Staff Grant.}
\begin{document}

\begin{abstract}
Suppose $L=-\Delta+V$ is a Schr\"odinger operator on $\mathbb{R}^n$ with a potential 
$V$ belonging to certain reverse H\"older class $RH_\sigma$ with $\sigma\geq n/2$. 
The aim of this paper is to study the $A_p$ weights associated to $L$, denoted 
by $A_p^L$, which is a larger class than the classical Muckenhoupt $A_p$ weights.   We first prove the quantitative $A_p^L$ bound 
for the maximal function and the maximal heat semigroup associated to $L$. Then we 
further provide the quantitative $A_{p,q}^L$ bound for the fractional integral 
operator associated to $L$. We point out that all these quantitative bounds are known 
before in terms of the classical $A_{p,q}$ constant. However, since $A_{p,q}\subset 
A_{p,q}^L$, the $A_{p,q}^L$ constants are smaller than $A_{p,q}$ constant. Hence, 
our results here provide a better quantitative constant for maximal functions 
and fractional integral operators associated to $L$. Next, we prove 
two--weight inequalities for the fractional integral operator; these 
have been unknown up to this point. 
Finally we 
also have a study on the ``exp--log'' link between $A_p^L$ and $BMO_L$ (the BMO space 
associated with $L$), and show that for $w\in A_p^L$, $\log w$ is in  $BMO_L$, and that
the reverse is not true in general.

%, since they are known to have quantitative 
%constants in terms of classical $A_p$ or $A_{p,q}$.

%and improve those previous related results. 
\end{abstract}

	\maketitle  
	
%%%%%%%%%%%%%%%%%%%%%%%%%%%%%% SECTION  SECTION SECTION
%%%%%%%%%%%%%%%%%%%%%%%%%%%%%% SECTION  SECTION SECTION 

%In the sequel we set
%$$ L=-\Delta+V \quad {\rm in}\ \mathbb{R}^n, \ n\geq3,$$
%where $V\geq0 $ and is locally integrable.

\section{Introduction and Statement of Main Results} \label{s:1}
\setcounter{equation}{0}

%Let $Lf(x):=-(\Delta f)(x) + V(x)f(x)$ be the Schr\"odinger operator with non--negative 
%potential $V$. The goal of this paper is to investigate weighted inequalities for 
%the operator $I_\alpha^{L}f(x):=(-L)^{-\frac{\alpha}{2}}f(x)$. In particular for a
%weight $w$ we would like to estimate $\norm{I_\alpha^L:L^p(w^p)\to L^q(w^q)}$ in terms of a 
%fractional Muckenhoupt characteristic of $w$ adapted to $L$. Recall that in the classical

The theory of Muckenhoupt $A_p$ weights plays an important role in harmonic analysis and partial 
differential equations. 
For example, it is well known that $A_p$ weights can be characterized equivalently via the boundedness of
Hardy--Littlewood maximal functions and the Hilbert transform, the Riesz transforms in higher dimension.
Moreover, $A_p$ weights  also connect to the BMO space via the exponential and logarithm mapping, i.e., 
if $w$ is an $A_p$ weight, then 
$\log w$ is in $\bmo$, conversely, if $\log w\in\bmo$ then there is a 
$\gamma>0$ and $p>1$ such that $w^{\gamma}\in A_p$. 
%
%We prove a version of this for $\bmo$ and $A_p$ adapted to $L$. 
%
%When $\alpha=0$, versions of Theorem \ref{T:osf} are true with 
%$I_\alpha^{-\Delta}$ replaced by various singular integral operators. 

In recent years, the sharp $A_p$ bound for Calder\'on--Zygmund operators has been obtained. 
The cases of the Hilbert and Riesz transforms were shown by Petermichl \cites{Pet2007,Pet2008},
the case of Haar shifts was proven by Lacey, Petermichl and Reguera \cite{LacPetReg2010},
for dyadic paraproducts by Beznosova \cite{Bez2008}, for the 
Bergman projection on the upper half plane by Pott--Reguera \cite{PotReg2013}  
and for general Calder\'on--Zygmund operators by Hyt\"onen \cite{Hyt2012}. 

%
%
%setting $L:=-\Delta$ and $I_\alpha^{-\Delta} f(x):=\int_{\R^n}f(y)\abs{x-y}^{\alpha - n}dy$. 
%The weighted theory for $I_\alpha^{-\Delta}$ is well--established. 
%
Besides the $A_p$ class, in \cite{MucWhe1971}
Muckenhoupt and Wheeden also introduced the fractional weight class $A^\alpha_{p,q}$ in $\mathbb R^n$ as follows: a non-negative locally integrable function 
$w$ is in $ A^\alpha_{p,q}$ if
% Muckenhoupt characteristic of a weight:
\begin{align*}
[w]_{A_{p,q}^\alpha}:=
\sup_{Q\textnormal{ a cube}}
  \left({1\over |Q|}\int_{Q}w(x)^q dx\right)\left({1\over |Q|}\int_{Q}w(x)^{-p'}dx\right)^{\frac{q}{p'}} <\infty,
\end{align*}
where $\frac{1}{p}-\frac{1}{q}=\frac{\alpha}{n}$.  When $\alpha=0$, then the class $A^\alpha_{p,q}$
becomes the classical $A_p$ weight.

They showed that 
$$\norm{I_\alpha:L^p(w^p)\to L^q(w^q)}<\infty$$ if and only if 
$[w]_{A_{p,q}^\alpha}<\infty$, where $I_\alpha$ is the standard fractional integral operators defined as
$$ I_\alpha f(x):=\int_{\R^n}f(y)\abs{x-y}^{\alpha - n}dy. $$

Later, a sharp version of this theorem was given 
by Lacey, Moen, P\'erez, and Torres \cite{LacMoePerTor2010} as follows.
\begin{TheoremA}[\cite{LacMoePerTor2010}]\label{T:osf}
Let $\frac{1}{p}-\frac{1}{q}=\frac{\alpha}{n}$ and let $w$ be in $A^\alpha_{p,q}$. There 
holds
\begin{align*}
\norm{I_\alpha: L^p(w^p)\to L^q(w^q)}
\lesssim [w]_{A_{p,q}^\alpha}^{(1-\frac{\alpha}{n})\max\{1,\frac{p'}{q}\}}
\end{align*}
and this result is sharp in the sense that there is a family of weights 
$\{w_\delta\}_{\delta\in\mathcal{A}}$ such that 
$$\norm{I_\alpha:L^p(w_\delta^p)\to L^q(w_\delta^q)}
\simeq [w_\delta]_{A_{p,q}^\alpha}^{(1-\frac{\alpha}{n})\max\{1,\frac{p'}{q}\}}.$$
\end{TheoremA}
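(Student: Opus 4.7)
The plan is to follow the dyadic-model philosophy now standard in this subject: pointwise dominate $I_\alpha$ by a sparse dyadic fractional operator, apply Sawyer's two-weight testing theorem, and extract the sharp exponent from the testing constants. First I would use the shifted-grid argument of Sawyer--Wheeden to reduce matters to the dyadic model
\[
I_\alpha^{\mathcal{D}}f(x)\;=\;\sum_{Q\in\mathcal{D}} |Q|^{\alpha/n}\,\langle f\rangle_Q\,\mathbf{1}_Q(x),
\]
and then, via a standard stopping-time (corona) argument, to a sparse sub-operator indexed on a sparse family $\mathcal{S}\subset\mathcal{D}$. It therefore suffices to prove the estimate for this sparse model with a constant independent of the choice of $\mathcal{D}$.

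Setting $\sigma:=w^{-p'}$, the target inequality is equivalent (after the substitution $f=\sigma g$) to the off-diagonal two-weight bound
\[
\bigl\|I_\alpha^{\mathcal{D}}(\sigma\,\cdot):L^p(\sigma)\to L^q(w^q)\bigr\|\;\lesssim\;[w]_{A_{p,q}^\alpha}^{(1-\alpha/n)\max\{1,p'/q\}}.
\]
Sawyer's two-weight theorem for fractional integrals characterises the left-hand norm by the localised testing constants
\[
\mathcal{T}:=\sup_Q \sigma(Q)^{-1/p}\,\bigl\|\mathbf{1}_Q\, I_\alpha^{\mathcal{D}}(\sigma\mathbf{1}_Q)\bigr\|_{L^q(w^q)},\qquad \mathcal{T}^*:=\sup_Q w^q(Q)^{-1/q'}\,\bigl\|\mathbf{1}_Q\, I_\alpha^{\mathcal{D}}(w^q\mathbf{1}_Q)\bigr\|_{L^{p'}(\sigma)},
\]
so the problem reduces to proving $\mathcal{T},\mathcal{T}^*\lesssim [w]_{A_{p,q}^\alpha}^{(1-\alpha/n)\max\{1,p'/q\}}$. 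For a fixed $Q$ I would expand $I_\alpha^{\mathcal{D}}(\sigma\mathbf{1}_Q)=\sum_{P\subseteq Q}|P|^{\alpha/n-1}\sigma(P)\mathbf{1}_P$, run a corona decomposition on the $\sigma$-averages inside $Q$, and then invoke a Carleson embedding theorem. Because $\tfrac{1}{p}-\tfrac{1}{q}=\tfrac{\alpha}{n}$, the very definition of $[w]_{A_{p,q}^\alpha}$ gives the pointwise inequality $\sigma(P)^{1/p'}w^q(P)^{1/q}\leq [w]_{A_{p,q}^\alpha}^{1/q}|P|^{1-\alpha/n}$, which feeds directly into each term of the resulting sum.

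The main obstacle is extracting the precise exponent $(1-\alpha/n)\max\{1,p'/q\}$ rather than a cruder power. The two regimes $p'\leq q$ and $p'>q$ require genuinely different arguments: in the former a single stopping family adapted to $\sigma$ suffices and one application of H\"older's inequality delivers the exponent $1-\alpha/n$; in the latter a parallel corona built simultaneously on $\sigma$ and $w^q$ is needed, together with a careful duality pairing, to produce the additional factor $p'/q$ in the exponent. For the sharpness claim I would test against the power weights $w_\delta(x)=|x|^{\delta}$ with $\delta$ chosen close to the critical exponent forcing $w_\delta\in A_{p,q}^\alpha$; computing $I_\alpha(w_\delta^{-p'}\mathbf{1}_{B(0,1)})$ explicitly shows that $[w_\delta]_{A_{p,q}^\alpha}$ blows up algebraically as $\delta$ approaches criticality while the operator norm retains a matching polynomial behaviour, confirming the exponent cannot be lowered.
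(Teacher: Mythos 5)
The statement you are proving is a cited theorem from Lacey--Moen--P\'erez--Torres, and this paper does not reprove it; but the paper does reconstruct its proof in Section \ref{s:fio} (Theorems \ref{T:re}, \ref{T:extrap}, Lemmas \ref{L:rsaw}, \ref{L:sharpweak}, \ref{L:bcextrap}) in order to adapt it to the restricted cube families $\mathcal{Q}_r$, and that reconstruction follows the original route: (1) Sawyer's self-adjoint testing theorem reduces the strong $(p,q)$ bound to two weak-type bounds for $I_\alpha$ and its formal adjoint; (2) a sharp off-diagonal extrapolation theorem (built on a Rubio de Francia iteration, Lemma \ref{L:rsa}) reduces all weak-type $(p,q)$ bounds to a single base case $(p_0,q_0)=(1,n/(n-\alpha))$; (3) the base case weak $(1,q_0)$ estimate with exponent $1-\alpha/n$ is proved directly. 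Your route is genuinely different after the first step: rather than extrapolating from a base case, you propose to dominate by a sparse dyadic model and then bound both Sawyer testing constants $\mathcal{T},\mathcal{T}^*$ directly via corona decompositions and Carleson embedding, in the style of the later work of Cruz-Uribe and Cruz-Uribe--Moen (cited in the paper as \cite{Cru2015,CruMoe2013}). This is a correct and now-standard alternative; it buys locality (everything happens on a single cube) and avoids the extrapolation machinery, which makes it more portable to settings where $C_c^\infty$ density or the Rubio de Francia iteration are awkward. The extrapolation route buys economy: one only needs a single endpoint weak-type estimate, which is why the paper prefers it when working over the reduced families $\mathcal{Q}_r$. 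The one caveat on your sketch is that the crux --- extracting exactly $(1-\alpha/n)\,p'/q$ in the regime $p'>q$ via a parallel corona on $\sigma$ and $w^q$ with the right duality pairing --- is where essentially all of the difficulty lives, and your proposal gestures at it rather than carrying it out; that step is precisely what the extrapolation theorem bypasses by converting the dual exponent into a change of characteristic $[w^{-1}]_{A_{q',p'}^\alpha}=[w]_{A_{p,q}^\alpha}^{p'/q}$.
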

They also showed the sharp weighted bound for the fractional maximal operator
(we remark that here and throughout the paper, for a measurable set $E$ 
we write $E(x)$ to mean the indicator function, i.e. $E(x)=\unit_{E}(x)$)
$$M_\alpha f(x):=\sup_{Q\textnormal{ a cube}}\frac{Q(x)}
{\abs{Q}^{1-\frac{\alpha}{n}}}\int_{Q}\abs{f(y)}dy.$$
%as follows.
\begin{TheoremB}[\cite{LacMoePerTor2010}]\label{T:osm}
Let $\frac{1}{p}-\frac{1}{q}=\frac{\alpha}{n}$ and let  $w$ be in $A^\alpha_{p,q}$. There 
holds
\begin{align*}
\norm{M_\alpha:L^p(w^p)\to L^q(w^q)}
\lesssim [w]_{A_{p,q}^\alpha}^{(1-\frac{\alpha}{n})\frac{p'}{q}}.
\end{align*}
\end{TheoremB}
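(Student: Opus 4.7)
My plan is to combine a dyadic Calder\'on--Zygmund stopping decomposition with the $A_{p,q}^\alpha$ hypothesis and a Carleson-embedding argument, following the sparse-domination strategy that has become standard for sharp weighted bounds.

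By the one-third shift trick, it suffices to bound the dyadic fractional maximal operator $M_\alpha^{\mathcal{D}}$ on a fixed dyadic grid $\mathcal{D}$. For $f\geq 0$ and $a > 2^{n+1}$, I form the level sets $\Omega_k := \{M_\alpha^{\mathcal{D}}f > a^k\}$ and extract the maximal dyadic cubes $\{Q_j^k\}$ composing $\Omega_k$, which satisfy $a^k < |Q_j^k|^{\alpha/n}\langle f\rangle_{Q_j^k}\leq 2^n a^k$. The sets $E_j^k := Q_j^k \setminus \Omega_{k+1}$ are pairwise disjoint; the fractional-specific point is that the stopping inequality yields $\sum_i |Q_i^{k+1}|^{1-\alpha/n} \leq (2^n/a)|Q_j^k|^{1-\alpha/n}$ for the children $Q_i^{k+1}\subset Q_j^k$, and since $1-\alpha/n\leq 1$ this improves to $\sum_i |Q_i^{k+1}|\leq (2^n/a)|Q_j^k|$, forcing the sparseness $|E_j^k|\geq |Q_j^k|/2$.

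A layer-cake argument then produces
\begin{equation*}
\|M_\alpha^{\mathcal{D}}f\|_{L^q(w^q)}^q \lesssim \sum_{k,j}\bigl(|Q_j^k|^{\alpha/n}\langle f\rangle_{Q_j^k}\bigr)^q w^q(E_j^k).
\end{equation*}
Set $\sigma := w^{-p'}$ and write $f = g\sigma$, so that $\|f\|_{L^p(w^p)}=\|g\|_{L^p(\sigma)}$. One application of H\"older's inequality inside each cube gives $\langle f\rangle_Q^q \leq |Q|^{-q}\sigma(Q)^{q/p'}\bigl(\int_Q g^p\sigma\bigr)^{q/p}$, and combining this with the $A_{p,q}^\alpha$ estimate $\sigma(Q)^{q/p'}w^q(Q)\leq [w]_{A_{p,q}^\alpha}|Q|^{1+q/p'}$, whose $|Q|$-powers cancel exactly under the scaling $\tfrac{1}{p}-\tfrac{1}{q}=\tfrac{\alpha}{n}$, produces
\begin{equation*}
\|M_\alpha^{\mathcal{D}}f\|_{L^q(w^q)}^q \lesssim [w]_{A_{p,q}^\alpha}\sum_{k,j}\Bigl(\int_{Q_j^k}g^p\,\sigma\Bigr)^{q/p}.
\end{equation*}

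The main obstacle is the final super-linear Carleson embedding: the remaining sparse sum must be controlled by $[w]_{A_{p,q}^\alpha}^{p'/q}\|g\|_{L^p(\sigma)}^q$, so that altogether the $q$-th power of the norm carries $[w]_{A_{p,q}^\alpha}^{1+p'/q} = [w]_{A_{p,q}^\alpha}^{(1-\alpha/n)p'}$, matching the sharp target after taking $q$-th roots. The strategy is to introduce principal $\sigma$-stopping cubes $\mathcal{F}$ for $g$ (where $\langle g\rangle_{\cdot,\sigma}$ approximately doubles), replace each $Q_j^k$ by its principal ancestor $\pi(Q_j^k)\in\mathcal{F}$, apply the super-additivity $\bigl(\sum_j a_j\bigr)^{q/p}\geq \sum_j a_j^{q/p}$ (valid since $q/p\geq 1$) to collapse the children under each $F$, and close with the classical Carleson embedding $\sum_{F\in\mathcal{F}}\langle g\rangle_{F,\sigma}^p\,\sigma(F)\lesssim \|g\|_{L^p(\sigma)}^p$. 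The delicate part is the $\sigma$-packing of the sparse family, which requires an $A_\infty$ constant for $\sigma$; bounding this by $[w]_{A_{p,q}^\alpha}^{p'/q}$ via a Hyt\"onen--P\'erez-type comparison, and arranging it to appear with exactly the exponent needed to hit the sharp constant, is the principal bookkeeping challenge of the proof.
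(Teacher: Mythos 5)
Your proposal takes a genuinely different route from the one the paper follows. The paper's argument for the analogous bound (stated to follow \cite{LacMoePerTor2010} directly) is a short pointwise one: write $u=w^q$, $\sigma=w^{-p'}$, $r=1+q/p'$, factor $\abs{Q}^{\alpha/n-1}\int_Q\abs{f}$ into a piece controlled by $[w]_{A^\alpha_{p,q}}^{(1-\alpha/n)p'/q}$ and a nested pair of weighted averages, and deduce the pointwise inequality $M_\alpha f(x)\lesssim[w]_{A^\alpha_{p,q}}^{(1-\alpha/n)p'/q}\,M_u\bigl\{M_\sigma^\alpha(f\sigma^{-1})^{q/r'}u^{-1}\bigr\}(x)^{r'/q}$; the universal, weight--free $L^{r'}(u)\to L^{r'}(u)$ bound for $M_u$ (Doob) and the universal $L^s(\sigma)\to L^{r'}(\sigma)$ bound for $M^\alpha_\sigma$ then finish. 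Your Calder\'on--Zygmund stopping decomposition with Carleson embedding is the sparse-style alternative, and the first half of your argument (sparseness of $E^k_j$, layer-cake, insertion of $[w]_{A^\alpha_{p,q}}$ with cancellation of the $\abs{Q}$-powers) is correct.

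The second half has a genuine gap, and it originates earlier than you locate it: the H\"older step $\langle f\rangle_Q^q\le\abs{Q}^{-q}\sigma(Q)^{q/p'}\bigl(\int_Q g^p\sigma\bigr)^{q/p}$ gives away the Jensen slack and is fatal. One should use the exact identity $\langle f\rangle_Q=\abs{Q}^{-1}\sigma(Q)\langle g\rangle_{Q,\sigma}$, so the reduced sum is $\sum_Q\sigma(Q)^{q/p}\langle g\rangle_{Q,\sigma}^q$ rather than $\sum_Q\sigma(Q)^{q/p}\langle g^p\rangle_{Q,\sigma}^{q/p}$. The latter cannot be bounded by $[w]_{A^\alpha_{p,q}}^{p'/q}\bigl(\int g^p\sigma\bigr)^{q/p}$: already with $w\equiv1$ and $g=\unit_{[0,\epsilon]}$, the sparse family of all dyadic subcubes of $[0,1]$ makes $\sum_Q\bigl(\int_Q g^p\sigma\bigr)^{q/p}\sim\epsilon^{q/p}\log(1/\epsilon)$, while $\bigl(\int g^p\sigma\bigr)^{q/p}\sim\epsilon^{q/p}$; no finite exponent on a bounded $A$-characteristic recovers the $\log$. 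Second, even with the exact identity, collapsing the children of each principal cube via super-additivity $\sum_j a_j^{q/p}\le(\sum_j a_j)^{q/p}$ accumulates $[\sigma]_{A_\infty}^{q/p}$, i.e.\ the exponent $(1-\alpha/n)p'/p$ on $[w]_{A^\alpha_{p,q}}$ rather than the sharp $(1-\alpha/n)p'/q$. The correct move is to pull $\sigma(Q)^{q/p-1}\le\sigma(F)^{q/p-1}$ out of the inner sum and add up only $\sigma(Q)$, which costs a single power of $[\sigma]_{A_\infty}\le[\sigma]_{A_{1+p'/q}}=[w]_{A^\alpha_{p,q}}^{p'/q}$; apply super-additivity only afterward, over the stopping cubes $\mathcal{F}$, to invoke the $L^p(\sigma)$ Carleson embedding. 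With those two corrections your route does recover the sharp exponent.
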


%Below, we will recall a class of weights similar to the $A_{p,q}^\alpha$ classes  but adapted to the operator $L$. We will prove versions of the Theorems \ref{T:osf} and \ref{T:osm} adapted to $L$. 

%Recall also that when $\alpha=0$ 
%we have $p=q$ and the $A^\alpha_{p,q}$ characteristic is the $A_p$ characteristic of 
%Muckenhoupt. It is well--known that if $w$ is an $A_p$ weight, then 
%$\log w$ is in $\bmo$. Converely, if $\log w\in\bmo$ then there is a 
%$\gamma>0$ and $p>1$ such that $w\in A_p$. 
%
%%We prove a version of this for $\bmo$ and $A_p$ adapted to $L$. 
%
%When $\alpha=0$, versions of Theorem \ref{T:osf} are true with 
%$I_\alpha^{-\Delta}$ replaced by various singular integral operators. The 
%cases of the Hilbert and Riesz transforms were proven by Petermichl \cites{Pet2007,Pet2008},
%the case of Haar shifts was proven by Lacey, Petermichl and Reguera \cite{LacPetReg2010},
%for dyadic paraproducts by Beznosova \cite{Bez2008}, for the 
%Bergman projection on the upper half plane by Pott--Reguera \cite{PotReg2013} (though 
%in their theorem one only needs to consider Carleson cubes) 
%and for general Calder\'on--Zygmund operators by Hyt\"onen \cite{Hyt2012}. 

%\bigskip

It is well-known that the $A_p$ weights, Hilbert (Riesz) transforms,  $A_{p,q}^\alpha$ classes,  the fractional integral operators, and the corresponding quantitative estimates mentioned above
are associated with the standard Laplacian $\Delta$ in $\mathbb R^n$. Changing
the differential operator from the standard Laplacian $\Delta$ to other second order differential operators $L$ introduces new challenges and directions to explore, see for example some of the well-known results in the past 15 years \cite{KalVer1999,DM,CD,DY1,DY2,HMar,HMay,HMM}.

A natural question arises when changing the standard Laplacian $\Delta$ to another 
second order differential operator $L$: can we have new $A_p$ weights and 
$A_{p,q}^\alpha$ classes adapted to $L$ such that the related maximal functions, 
singular integrals and fractional integral operators have the right quantitative 
estimates in terms of the new $A_p$ or $A^\alpha_{p,q}$? 

In this paper, we focus on the Schr\"odinger operator  $L=-\Delta+V$ in 
$\mathbb{R}^n$,  $n\geq3$, where the non-negative function $V$ is in the reverse 
H\"older class. There has already been much work done on one--weight 
inequalities for these operators. However, there has never been sharp 
estimates (or any sort of quantitative estimates) for these operators. For 
the first time, we are able to prove such estimates. 

Quantitative bounds for the classical operators from harmonic analysis 
(e.g. Hilbert transform, Riesz transforms, maximal functions) are a deep 
reflection of the regularity of the classical Laplacian. Operators of the 
form $L=-\Delta + V$ present many challenges because they lack the regularity 
that $-\Delta$ possesses. In particular, the presence of the 
(non--negative) potential $V$ makes $L$ non--local in the sense that it is 
not invariant under translations and dilations. Of course, many techniques, 
theorems, and heuristics from classical harmonic analysis are based on the 
assumption that the operators under question possess this regularity that 
$L$ lacks.

%
%
%is non-negative. We will assume that 
%there exists a $C>0$ and $\sigma \geq n / 2$ such that for all cubes $Q$ there holds 
%\begin{align}\label{RH}
%\left(\frac{1}{\abs{Q}}\int_{Q}V(y)^\sigma dy\right)^{\frac{1}{\sigma}}
%\leq \frac{C}{\abs{Q}}\int_{Q}V(y)dy,
%\end{align}
%that is, $V\in\rh_\sigma$ ($\rh$ means ``Reverse H\"older''). Associated to 
%$V$ we have the critical function $\rho$ introduced in \cite{Sh}, defined by 
%\begin{align}\label{rho}
%\rho(x):= \Big( \sup\Big\{ r>0:\ {1\over r^{n-2}}
%\int_{B(x,r)} V(y)dy \leq 1  \Big\} \Big)^{-1}.
%\end{align}
%
%

In this Schr\"odinger setting,  a new class of $A_p$ weights associated to $L$ was introduced in \cite{BGS}, see also \cite{Tang2011}, which is a larger class, properly containing the classical Muckenhoupt $A_p$ weights. To be more precise, 
given $p>1$ we define $A_p^{\infty} 
= \cup_{\theta\geq0} A_p^{\theta}$, where $A_p^{\theta}$ is the set of
weights $w$ such that:
\begin{align}\label{weight}
[w]_{A_{p}^{\theta}}:=
\sup_{Q\textnormal{ a cube}}
\bigg({1\over {\psi}_\theta(Q) |Q| } \int_Q w(y)dy\bigg) 
\bigg( {1\over {\psi}_\theta(Q) |Q|} 
\int_Q w(y)^{-\frac{p'}{p}}  dy   \bigg)^{\frac{p}{p'}} < \infty,
\end{align}
where for each $\theta > 0$, $\psi_\theta$ on the collection 
of cubes $\{Q\}$ (with sides parallel to the coordinate axes) is defined by
\begin{align}\label{psi}
\psi_{\theta}(Q) := \Big(1+\frac{\ell (Q)}{\rho(Q)}\Big)^{\theta},
\end{align}
with $\rho(Q) := \rho(c_Q)$, $c_Q$ is the center of $Q$ and $\ell (Q)$ is the side-length of $Q$,  $\rho(x)$ is the critical function associated to the potential function $V$ (we refer to Section 2.2 for a precise definition).

We also have the fractional weight class $A^{\alpha,\theta}_{p,q}$ associated to $L$ defined as follows. 
%To study this, we use a version of the $A_{p,q}$ classes of Muckenhoupt--Wheeden 
%adapted to $L$. 
Let $p>1$ and let $q$ be defined by 
$\frac{1}{p}-\frac{1}{q}=\frac{\alpha}{n}$. We define $A^{\alpha,\theta}_{p,q}$ as the class of weights $w$ such that:
\begin{align*}
[w]_{A^{\alpha,\theta}_{p,q}}
:=\sup_{Q\textnormal{ a cube}}
\left(\frac{1}{\psi_{\theta}(Q)\abs{Q}}
  \int_{Q}w^q(x)dx\right)
\left(\frac{1}{\psi_\theta(Q)\abs{Q}}
  \int_{Q}w^{-p'}(x)dx\right)^{\frac{q}{p'}}<\infty.
\end{align*}

In \cite{BGS} and \cite{Tang2011}, they showed that this new weight class $A_p^\infty$ satisfies most of the properties parallel to the classical Muckenhoupt $A_p$ weights, and they also established the weighted boundedness of  $M^\theta$,  the 
Hardy--Littlewood maximal function adapted to $L$ (we refer to Section 2 for the definition), and the Riesz transforms $\nabla L^{-1/2} $ in terms of  $A_p^\infty$, and the fractional integral operators $L^{-\alpha/2}$ in terms of $A^{\alpha,\theta}_{p,q}$.

We also note that the BMO space associated to $L$ was introduced in \cite{BGS1}, denoted by $\BMO_{\infty}$ (for a precise definition, we refer to Section 3). They also studied the boundedness of commutators of functions in $\BMO_{\infty}$ and the singular integrals adapted to $L$.  

In this paper, we aim to study the following results regarding the weights $A_p^\infty$ and  $A^{\alpha,\theta}_{p,q}$: 
\begin{enumerate}

\item the quantitative estimates for the Hardy--Littlewood maximal function associated to $L$ in terms of $A_p^\infty$; 

\item the quantitative estimates for the fractional integral operator associated to 
$L$, denoted by $L^{-\alpha/2}$, in terms of $A^{\alpha,\theta}_{p,q}$;

\item the ``exp-log'' link between $A_p^\infty$ and $\BMO_\infty$.

\end{enumerate}

%The first theorem that will prove is a generalization of the well--known 
%fact that if $w\in A_p$ then $\log w \in \bmo$ and if 
%$\log w\in\bmo$ then $w^\eta\in A_p$ for some $\eta > 0$. 
To be more specific, the first main result of this paper 
%
%The second main result of this paper 
consists of quantitative estimates for several versions of maximal functions associated to $L$. Here we mainly consider the Hardy--Littlewood type maximal function, the fractional maximal function, and the maximal function associated to the heat semigroup generated by $L$.
For $\theta>0$, and $0\leq\alpha<n$, the fractional maximal function $M^{\theta,\alpha}$ 
associated to $L$ is defined as:
\begin{align*}
M^{\theta,\alpha}f(x)
:=\sup_{Q}\frac{Q(x)}{\left(\pp{\theta}(Q)\abs{Q}\right)^{1-\frac{\alpha}{n}}}
  \int_{Q}\abs{f(y)}dy.
%\hspace{.25in}
\end{align*}
In particular, when $\alpha=0$, we denote $M^\theta f(x):=M^{\theta,0} f(x)$, which is the Hardy--Littlewood type maximal function associated to $L$. We also recall the heat maximal function $M^L$ associated to $L$:
\begin{align}\label{D:heatmax}
M^{L}f(x) 
:= \sup_{t\geq 0} |e^{-tL}f(x)|.
\end{align}

%
%
%
%$M^\theta$,  the 
%Hardy--Littlewood maximal function adapted to $L$, 
%defined by 
%\begin{align}\label{maximal function}
%M^\theta(f)(x) := \sup_{r>0} 
%{1\over \psi_\theta(B(x,r)) |B(x,r)|} \int_{B(x,r)} |f(y)|dy, \quad \theta>0.
%\end{align}
%We will prove a quantitative version of a theorem of Tang \cite{Tang2011}. 
%This is similar to the theorem of Buckley for the Hardy--Littlewood 
%maximal function and the classical $A_p$ class \cite{Buckley1993}. 

%\begin{theorem}\label{T:sharp}
%Let $\gamma = \theta / (1+ \frac{p'}{p})$; there holds:
%\begin{align}\label{E:sharthmmain}
%\norm{M^{\theta}:L^p(w)\to L^p(w)}
%\lesssim [w]_{A_{p}^{\frac{\gamma}{3}}}^{\frac{p'}{p}}, 
%\end{align}
%\end{theorem}

Then we have the following quantitative estimates for
the Hardy--Littlewood type maximal function associated to $L$ and the maximal heat semigroup.
\begin{theorem}\label{T:sharp2}Suppose $\theta> 0$. Then we have that
\begin{itemize}
 \item[(1)]$w(\{x\in\mathbb R^n:\ M^\theta f(x)> \lambda\})
 \leq [w]_{A_p^{\theta}}\Big(\frac{\norm{f}_{L^p(w)}}{\lambda}\Big)^{p}$ for all $\lambda>0$ and for every $f\in L^p(\mathbb R^n)$ with $1< p<\infty$;
 \item [(2)] There is a $C_\theta$ so that
 \begin{align*}
  \norm{M^{L}:L^p(w) \to L^p(w)}
  \leq C_\theta \norm{M^{\theta}:L^p(w)\to L^p(w)}.
 \end{align*}
 As a consequence, we see that $M^L$ possesses the same quantitative estimate $M^\theta$ does.
\end{itemize}

\end{theorem}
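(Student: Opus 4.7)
\emph{Part (1).} My approach is the H\"older-style argument that directly converts the $A_p^\theta$ condition into a weak-type bound. For each $x\in E_\lambda:=\{M^\theta f>\lambda\}$, choose a cube $Q_x\ni x$ witnessing $\psi_\theta(Q_x)^{-1}|Q_x|^{-1}\int_{Q_x}|f|>\lambda$. Writing $|f|=(|f|^p w)^{1/p}w^{-1/p}$ and applying H\"older with exponent $p$ gives
\[
\lambda^p w(Q_x) \,\leq\, \left(\tfrac{1}{\psi_\theta(Q_x)|Q_x|}\int_{Q_x}\!w\right)\!\!\left(\tfrac{1}{\psi_\theta(Q_x)|Q_x|}\int_{Q_x}\!w^{-p'/p}\right)^{\!p/p'}\!\!\int_{Q_x}\!|f|^p w \,\leq\, [w]_{A_p^\theta}\int_{Q_x}\!|f|^p w.
\]
Since $\psi_\theta(Q)|Q|$ grows strictly faster than $|Q|$ as $\ell(Q)\to\infty$, the witnessing cubes are uniformly bounded in side-length for fixed $f\in L^p(w)$, so a stopping-time / Vitali extraction produces a pairwise disjoint subfamily $\{Q_j\}$ whose (small) dilates cover $E_\lambda$. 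The elementary observation $\psi_\theta(5Q)\le 5^\theta\psi_\theta(Q)$ lets me transfer the bad-cube condition to each $5Q_j$ (with the threshold $\lambda$ replaced by a constant multiple of $\lambda$), and summing the displayed H\"older estimate over the disjoint $\{Q_j\}$ yields the weak-type bound. A cleaner alternative that avoids the dilation is to pass to the dyadic $M^\theta$ via the Mei/Okikiolu $1/3$-trick and run the stopping-cube argument on maximal dyadic bad cubes, which are automatically disjoint and cover the level set.

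\emph{Part (2).} I would reduce the operator-norm comparison to the pointwise estimate $M^L f(x)\le C_\theta M^\theta f(x)$. The key ingredient is the Gaussian heat-kernel upper bound with critical-function correction, available because $V\in RH_\sigma$ with $\sigma\ge n/2$: for every $N\ge 1$ there exist $c,C_N>0$ with
\[
|p_t(x,y)|\,\leq\,\frac{C_N}{t^{n/2}}\,e^{-c|x-y|^2/t}\Bigl(1+\tfrac{\sqrt{t}}{\rho(x)}+\tfrac{\sqrt{t}}{\rho(y)}\Bigr)^{-N}.
\]
Bounding $|e^{-tL}f(x)|\le\int|p_t(x,y)||f(y)|\,dy$ and splitting into dyadic annuli $\{|x-y|\sim 2^j\sqrt{t}\}$, the Gaussian produces $e^{-c4^j}$ on the $j$-th annulus; estimating the ball mass $\int_{B(x,2^{j+1}\sqrt{t})}|f|$ by $\psi_\theta(B)|B|M^\theta f(x)\lesssim (1+2^{j+1}\sqrt{t}/\rho(x))^\theta(2^{j+1}\sqrt{t})^n M^\theta f(x)$ and taking $N>\theta$ absorbs the $\rho$-correction uniformly in $t$, leaving a summable geometric series in $j$. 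Taking the supremum over $t>0$ gives the pointwise bound, from which the operator-norm comparison is immediate by integrating against $w$.

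\emph{Main obstacle.} In (1) the delicate bookkeeping is the handling of the $\psi_\theta$ weight under cube dilations in the covering step; this is the essential novelty compared with the classical Muckenhoupt proof. In (2) the nontrivial input is the Dziuba\'nski--Zienkiewicz / Kurata kernel estimate with the $\rho$-correction, which crucially exploits the reverse-H\"older hypothesis on $V$; once this kernel bound is invoked, the annular decomposition is routine and the exponent $N$ is chosen to defeat $\theta$.
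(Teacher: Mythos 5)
Part (2) of your proposal is correct and is essentially the paper's own argument: reduce to the pointwise bound $M^L f\lesssim M^\theta f$, invoke the heat kernel estimate of Proposition \ref{prop-kernelestimates}, split into dyadic annuli $|x-y|\sim 2^j\sqrt t$, use the Gaussian factor to kill the geometric growth $2^{j(n+\theta)}$ coming from converting $t^{-n/2}$ and the $\rho$-correction into $\psi_\theta(2^jB)|2^jB|$. (One small remark: $N=\theta$ already suffices; $N>\theta$ is not needed.)

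Part (1), however, has a genuine gap in the covering step. Your H\"older computation $\lambda^p w(Q_x)\le [w]_{A_p^\theta}\int_{Q_x}|f|^p w$ is correct and is the same estimate used in the paper's Lemma \ref{L:weakp}. But your Vitali extraction does not close the argument. Vitali gives disjoint $\{Q_j\}$ with $E_\lambda\subset\bigcup 5Q_j$, and at that point you must sum. If you sum the H\"older bound over the disjoint $Q_j$ you control $\sum_j w(Q_j)$, not $w(E_\lambda)\le\sum_j w(5Q_j)$; passing from $w(5Q_j)$ to $w(Q_j)$ needs $w$ to be doubling, which $A_p^\theta$ weights in general are not (e.g.\ $w=e^{-\eta|x|^2}\in A_p^\infty$ for the Hermite operator has finite total mass and is nowhere near doubling). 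Alternatively, if you transfer the bad-cube condition to $5Q_j$ — which your observation $\psi_\theta(5Q)\le 5^\theta\psi_\theta(Q)$ does allow — and then apply H\"older on $5Q_j$, you must bound $\sum_j\int_{5Q_j}|f|^p w$, and the dilates $\{5Q_j\}$ of a disjoint Vitali family do \emph{not} have bounded overlap (e.g.\ $Q_j=[2^{-j},2^{-j+1}]$ are disjoint but every $5Q_j$ contains a fixed neighborhood of $0$). Neither route gives the weak-type bound. The paper's Lemma \ref{L:weakp} avoids this entirely by invoking the Besicovitch covering lemma: it produces $M(n)$ families of pairwise-disjoint cubes whose union already covers the compact set, so the H\"older estimate can be summed directly over disjoint cubes without any dilation, hence without any doubling assumption on $w$.

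Your proposed dyadic alternative also does not cleanly work as stated. To dominate $M^\theta$ pointwise by dyadic maximal operators you need, for a cube $Q$ and a dyadic $D\supset Q$ with $\ell(D)\lesssim\ell(Q)$, the inequality $\psi_\theta(D)\lesssim\psi_\theta(Q)$. Since $c_D\neq c_Q$, Lemma \ref{Lem1: rho}(i) only gives $\psi_\theta(D)\lesssim\psi_{(1+k_0)\theta}(Q)$, i.e.\ one loses a power of $\psi_\theta(Q)$, which is unbounded. So the dyadic reduction changes the exponent $\theta$ in the weight class, and the bound you would obtain is in terms of $[w]_{A_p^{\theta'}}$ for some $\theta'>\theta$, which is weaker than the statement (and indeed this kind of slack is what forces the paper to introduce $\widetilde\psi_\theta$ and the factor $3$ in Proposition \ref{L:bigwtsmlwt} for the strong-type results). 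Finally, your side-remark that the witnessing cubes have uniformly bounded side-length because $\psi_\theta(Q)|Q|$ grows faster than $|Q|$ is not justified: for general $f\in L^p(w)$ one only has $\int_Q|f|\le\|f\|_{L^p(w)}\sigma(Q)^{1/p'}$, and there is no a priori control of $\sigma(Q)^{1/p'}/(\psi_\theta(Q)|Q|)$ as $\ell(Q)\to\infty$; the paper sidesteps this by working on a compact subset $K_\lambda$ and letting Besicovitch handle the finiteness.
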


Moreover, we also have the following results regarding 
  the fractional maximal function $M^{\theta,\alpha}$ 
associated $L$.
\begin{theorem}\label{T:sharp}
Suppose $0\leq\alpha<n$, $\frac{1}{p}-\frac{1}{q}
=\frac{\alpha}{n}$. Let $\gamma = \theta/(1+\frac{p'}{q})$, then
 \begin{align*}
 \norm{M^{\theta,\alpha}:L^p(w^p)\to L^q(w^q)}
 \leq %\norm{\widetilde{M}^{\theta,\alpha}:L^p(w^p)\to L^q(w^q)}
 %\lesssim [w]_{\widetilde{A}_{p,q}^{\gamma}}^{\frac{p'}{q}(1-\frac{\alpha}{n})}
 %\simeq
  C [w]_{{A}_{p,q}^{\alpha,\frac{\gamma}{3}}}^{\frac{p'}{q}(1-\frac{\alpha}{n})}.
 \end{align*}
%\end{itemize}
\end{theorem}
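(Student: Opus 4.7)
The plan is to adapt the standard sharp-weighted-bound template for fractional maximal operators (à la Lacey–Moen–Pérez–Torres) to the Schr\"odinger setting, treating $\psi_\theta(Q)|Q|$ as the substitute for the usual volume $|Q|$. The structure is dyadicization, stopping-time sparse decomposition, and a weighted estimate using the $A^{\alpha,\theta}_{p,q}$ hypothesis, with the smaller parameter $\gamma/3$ absorbing two distinct sources of loss.

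\smallskip
\noindent\textbf{Step 1: Dyadic reduction.} Using a finite family of shifted dyadic grids $\{\mcd^{(d)}\}$, for every cube $Q$ there is a $d$ and a dyadic cube $Q_d \in \mcd^{(d)}$ with $Q \subset Q_d$ and $\ell(Q_d) \leq 3\ell(Q)$. Since $\rho$ is slowly varying at comparable scales, $\psi_\theta(Q_d) \leq C\psi_{3\theta}(Q)$, so it suffices to estimate the dyadic analogue
\[
M^{\theta,\alpha,\mcd} f(x) := \sup_{Q \in \mcd,\, Q \ni x}\frac{1}{(\pp{\theta}(Q)|Q|)^{1-\alpha/n}}\int_Q|f|\,dy.
\]
Comparing $\psi_\theta$ across the passage $Q\mapsto Q_d$ is the first source of loss in the localization parameter; this is what forces the factor of $\tfrac13$ in the hypothesis $[w]_{A^{\alpha,\gamma/3}_{p,q}}$.

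\smallskip
\noindent\textbf{Step 2: Stopping cubes and sparsity.} For each $k\in\Z$, select the maximal cubes $\{Q_k^j\}_j \subset \mcd$ with
\[
\lambda_{Q_k^j} := \frac{1}{(\pp{\theta}(Q_k^j)|Q_k^j|)^{1-\alpha/n}}\int_{Q_k^j}|f|\,dy > 2^k,
\]
and set $E_k^j := Q_k^j \setminus \bigcup_i Q_{k+1}^i$. A Calder\'on--Zygmund-type computation, using maximality of the parent cubes together with the near-monotonicity of $\pp{\theta}$ on dyadic parents, gives $|E_k^j| \geq \eta|Q_k^j|$ for some $\eta \in (0,1)$. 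This yields the level-set comparison
\[
\|M^{\theta,\alpha,\mcd}f\|_{L^q(w^q)}^q \lesssim \sum_{k,j} 2^{kq}\,w^q(E_k^j) \leq \sum_{k,j}\lambda_{Q_k^j}^q\,w^q(Q_k^j).
\]

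\smallskip
\noindent\textbf{Step 3: Weighted estimation via $A^{\alpha,\theta}_{p,q}$.} Set $\sigma := w^{-p'}$. H\"older's inequality gives $\int_Q |f| \leq (\int_Q |f|^p w^p)^{1/p}\sigma(Q)^{1/p'}$. Using $1/p' + 1/q = 1-\alpha/n$ and the $A^{\alpha,\theta}_{p,q}$ condition,
\[
\frac{w^q(Q)^{1/q}\sigma(Q)^{1/p'}}{(\pp{\theta}(Q)|Q|)^{1-\alpha/n}}\leq [w]_{A^{\alpha,\theta}_{p,q}}^{1/q},
\]
so $\lambda_{Q_k^j} w^q(Q_k^j)^{1/q} \leq [w]_{A^{\alpha,\theta}_{p,q}}^{1/q}(\int_{Q_k^j}|f|^p w^p)^{1/p}$. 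To upgrade the crude exponent $1/q$ to the sharp $(p'/q)(1-\alpha/n)$, invoke an $A_\infty^\theta$-type reverse H\"older inequality for $\sigma$ in the Schr\"odinger setting: this replaces $\sigma(Q_k^j)$ by $\sigma(E_k^j)$, converting the $|E_k^j|$-sparsity into $\sigma$-sparsity. A Carleson embedding against $d\sigma$ then collapses the double sum into $\|f\|_{L^p(w^p)}^q$ with the desired power of the weight constant.

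\smallskip
\noindent\textbf{Main obstacle.} Unlike $|Q|$, the averaging quantity $\pp{\theta}(Q)|Q|$ is neither doubling nor monotone under cube inclusion, and $\psi_\theta$ changes in a way controlled only by the critical function $\rho$. Consequently, every step that compares scales---the dyadic reduction (factor of $3$) and the H\"older/$A_\infty^\theta$ duality step (factor of $1+p'/q$)---leaks a little into the localization parameter, and these leaks together force the hypothesis to be stated in the strictly smaller class $A^{\alpha,\gamma/3}_{p,q}$ with $\gamma = \theta/(1+p'/q)$. Carefully tracking these losses while keeping the exponent $(p'/q)(1-\alpha/n)$ untouched is the principal technical difficulty.
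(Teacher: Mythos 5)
Your proposal takes a sparse/stopping-time route that is genuinely different from the paper's argument, and in this setting it has a real gap. The paper's proof (modeled on Theorem B of Lacey--Moen--P\'erez--Torres) is a pointwise-domination argument: it first passes from $M^{\theta,\alpha}$ to $\widetilde{M}^{\theta,\alpha}$ (trivial, since $\widetilde{\psi}_\theta\le\psi_\theta$), then factors the quantity $\widetilde{\psi}_\theta(Q)$ as $\widetilde{\psi}_\gamma(Q)^{p'/q}\widetilde{\psi}_\gamma(Q)$ so that the integral average of $f$ can be written as the $[w]_{\widetilde{A}^{\alpha,\gamma}_{p,q}}$ constant times a composition of the \emph{universal} weighted maximal functions $M_u$ and $M^\alpha_\sigma$ with $u=w^q$, $\sigma=w^{-p'}$ (Lemma \ref{L:univwtfrac}: Doob for $\alpha=0$, Sawyer's estimate for $\alpha>0$). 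The equation $\gamma\tfrac{p'}{q}+\gamma=\theta$ is where $\gamma=\theta/(1+p'/q)$ comes from; it is a purely algebraic split of the $\widetilde{\psi}_\theta$ factor, not an $A_\infty$ or duality step. The factor $3$ in $\gamma/3$ comes from Proposition \ref{L:bigwtsmlwt}, which compares the center-based $\psi_\theta$ to the supremum-based $\widetilde{\psi}_\theta$ on the \emph{same} cube via the $k_0$-estimate for $\rho$, not from a dyadic-grid reduction. The crucial feature of the paper's route is that the Doob/Sawyer bounds are universal in the weight: no reverse H\"older, $A_\infty$, or doubling property of $u$ or $\sigma$ is needed.

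Your Step 3 is exactly where this matters. To upgrade the crude $1/q$ exponent to $(p'/q)(1-\alpha/n)$ you invoke an ``$A_\infty^\theta$-type reverse H\"older inequality for $\sigma$ in the Schr\"odinger setting.'' No such tool is established in the paper, and Section \ref{s:con} explicitly states that an $A_\infty$ theory adapted to $-\Delta+V$ is lacking (the natural surrogate $\rho_w(Q)$ is ``too large to capture enough information for weights in our classes''). Without it, the sparse decomposition cannot deliver the sharp exponent. Step 2 is also delicate: the packing estimate $\abs{E_k^j}\ge\eta\abs{Q_k^j}$ requires comparing $(\psi_\theta(\cdot)\abs{\cdot})^{1-\alpha/n}$ between nested stopping cubes, and $\psi_\theta$ is not monotone (nor controllably comparable without $\rho$-dependent losses) under cube inclusion; you assert ``near-monotonicity'' but do not address how those losses interact with the stopping parameter. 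In short, the obstacle you correctly flag at the end --- that $\psi_\theta(Q)\abs{Q}$ is neither doubling nor monotone --- is precisely what defeats the stopping/sparse scheme here, and the paper's composition-of-universal-maximal-functions approach is used \emph{because} it sidesteps every place your argument would need that structure.
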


The third main result of this paper is a quantitative estimate of  the fractional integral 
operator $L^{-\frac{\alpha}{2}}f(x)$.

%adapted to $L$, denoted by $L^{-\frac{\alpha}{2}}f(x)$.
%To study this, we use a version of the $A_{p,q}$ classes of Muckenhoupt--Wheeden 
%adapted to $L$. Let $p>1$ and let $q$ be defined by 
%$\frac{1}{p}-\frac{1}{q}=\frac{\alpha}{n}$. We define $A_{p,q}^{\theta}$ by
%\begin{align*}
%[w]_{A_{p,q}^{\theta}}
%:=\sup_{Q\textnormal{ a cube}}
%\left(\frac{1}{\psi_{\theta}(Q)\abs{Q}}
%  \int_{Q}w^q(x)dx\right)
%\left(\frac{1}{\psi_\theta(Q)\abs{Q}}
%  \int_{Q}w^{-p'}(x)dx\right)^{\frac{q}{p'}}.
%\end{align*}
%We will prove a quantitative version of a theorem of Tang \cite{Tang2011}. 
%This is also a version of the theorem of Lacey--Moen--Torres--P\'erez 
%adapted to our setting \cite{LacMoePerTor2010}.
\begin{theorem}\label{T:mainfio}
Suppose $0\leq\alpha<n$. Let $1<p<\frac{n}{\alpha}$ and $q$ be defined by the equation 
$\frac{1}{q}=\frac{1}{p}-\frac{\alpha}{n}$ and let 
$K$ be defined by the equation $\left(\frac{1}{K} +
\frac{1}{K}\frac{q}{p'}\right)(1-\frac{\alpha}{n})
  \max\{1,\frac{p'}{q}\}
=\frac{1}{2}$. For $w\in {A}_{p,q}^{\alpha,\theta / 3K}$ there holds
\begin{align*}
\norm{L^{-\frac{\alpha}{2}}:\, L^p(w^p)\to L^q(w^q)}
\lesssim 
  [w]_{{A}_{p,q}^{\alpha,\theta / 3K}}^{(1-\frac{\alpha}{n})
  \max\{1,\frac{p'}{q}\}},
\end{align*}
where the implied constant depends on $p,q,\alpha,n$, and $\theta$.
\end{theorem}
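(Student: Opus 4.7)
The plan is to model the argument on the classical sharp bound for the standard fractional integral $I_\alpha$ (Theorem A), while accommodating the non--translation--invariance of $L^{-\alpha/2}$ by exploiting kernel bounds carrying critical function decay. The ingredients are: a pointwise kernel estimate for $L^{-\alpha/2}$ with extra decay in $|x-y|/\rho(x)$, a decomposition of the operator into scales relative to $\rho$, and the classical sharp bound (Theorem A) applied on each scale, together with careful bookkeeping of the $\psi_\theta$ factors.

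First, I would combine the subordination formula $L^{-\alpha/2}f(x) = \frac{1}{\Gamma(\alpha/2)}\int_0^\infty t^{\alpha/2-1}e^{-tL}f(x)\,dt$ with the known Gaussian upper bounds on the heat kernel of $L$, which incorporate decay relative to $\rho$ whenever $V\in RH_\sigma$. Integrating in $t$ with the weight $t^{\alpha/2-1}$ (where the relevant scale is $\sqrt t\sim |x-y|$) produces a kernel estimate of the form
$$|K_{L^{-\alpha/2}}(x,y)| \lesssim_N \frac{1}{|x-y|^{n-\alpha}}\Big(1+\frac{|x-y|}{\rho(x)}\Big)^{-N},$$
valid for any prescribed $N>0$. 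This reduces the analysis of $L^{-\alpha/2}$ to that of an essentially classical fractional integral equipped with a localizing factor.

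Next, I would split $L^{-\alpha/2}f(x) = \sum_{k\geq 0}T_k f(x)$, where $T_k$ integrates the kernel over the annular region $\{y:2^{k-1}\rho(x)\le |x-y|<2^k\rho(x)\}$ (with $k=0$ collecting $|x-y|\le\rho(x)$). By the kernel bound each $T_k$ behaves like the classical $I_\alpha$ truncated to balls of radius $\sim 2^k\rho(x)$ and multiplied by a decay factor $2^{-kN}$. On such a localized piece, Theorem A applies and controls $T_k$ in terms of the ordinary Muckenhoupt constant $[w]_{A_{p,q}^\alpha}$ restricted to that scale. The cost of re--expressing this ordinary constant in terms of the Schr\"odinger constant $[w]_{A_{p,q}^{\alpha,\theta/3K}}$ is, by direct inspection of the definitions, a factor of $\psi_{\theta/3K}(Q)^{1+q/p'}$ on each cube $Q$ with $\ell(Q)\sim 2^k\rho(c_Q)$. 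Raising to the sharp exponent $(1-\frac{\alpha}{n})\max\{1,\frac{p'}{q}\}$ and invoking the defining relation
$$\Big(\frac{1}{K}+\frac{1}{K}\frac{q}{p'}\Big)\Big(1-\frac{\alpha}{n}\Big)\max\Big\{1,\frac{p'}{q}\Big\}=\frac{1}{2}$$
converts this price into exactly $\psi_{\theta/6}(Q)\sim 2^{k\theta/6}$. Summing in $k$, the kernel decay $2^{-kN}$ dominates the growth $2^{k\theta/6}$ as soon as $N>\theta/6$ (which is permissible, as $N$ is free), and the series $\sum_k 2^{-k(N-\theta/6)}$ converges to a constant depending only on $p,q,\alpha,n,\theta$. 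This yields the claimed bound $\|L^{-\alpha/2}:L^p(w^p)\to L^q(w^q)\|\lesssim [w]_{A_{p,q}^{\alpha,\theta/3K}}^{(1-\alpha/n)\max\{1,p'/q\}}$.

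The main obstacle is the delicate calibration of exponents in the previous paragraph: aligning the kernel decay $N$, the Schr\"odinger weight parameter $\theta/3K$, and the classical sharp exponent so that the bookkeeping produces \emph{exactly} the stated power of $[w]_{A_{p,q}^{\alpha,\theta/3K}}$, rather than a degraded one. The defining equation for $K$ is forced precisely by the identity $(1+q/p')(1-\alpha/n)\max\{1,p'/q\}/K=1/2$, and the additional factor of $3$ in $\theta/3K$ provides slack for further $\psi_\theta$ factors incurred when passing to doubled or tripled parent cubes inside Theorem A. A secondary technical point is that Theorem A must be applied in a scale--localized form on balls of radius $\sim 2^k\rho(x)$; this is routine but needs to be tracked to ensure the implicit constants remain uniform in $k$ and are absorbed into the geometric sum.
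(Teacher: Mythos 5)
Your overall scheme — dominate $L^{-\alpha/2}$ by a kernel with classical fractional--integral size plus $\rho$-adapted decay, decompose into geometric pieces, apply the sharp classical bound to each piece, and sum — is broadly in the spirit of the paper's argument, and your calibration of the defining equation for $K$ is the right idea. But there is a genuine gap at the central step, and it is precisely the step the paper flags as the technical heart of the proof. You write ``On such a localized piece, Theorem A applies and controls $T_k$.'' This does not follow. Theorem A of Lacey--Moen--P\'erez--Torres is a sharp bound for the full, translation- and dilation-invariant operator $I_\alpha$ on $\R^n$, proved via a sharp extrapolation argument that uses all cubes and all scales. Your $T_k$ is a truncated operator whose truncation region $\{2^{k-1}\rho(x)\le |x-y|<2^k\rho(x)\}$ varies with $x$ through $\rho(x)$, and after discretization it becomes a sum over a proper subcollection of dyadic cubes. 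Nothing in Theorem A licenses applying the sharp bound to such a restricted sum, and a weight that lies in $A_{p,q}^{\alpha,\theta/3K}$ need not lie in the classical $A_{p,q}^\alpha$ at all (cf.\ the example $e^{\eta|x|^2}$), so one cannot simply invoke Theorem A on ``local'' pieces either. The paper makes this explicit: it states that Theorem A ``can not be directly applied to an operator like $I_\alpha^{\mathcal{Q}_r}$'' and that purely dyadic versions (e.g.\ Moen) are only valid for restricted exponents. Closing this gap is exactly what the remainder of Section~5 does: it defines the restricted classes $A_{p,q}^{\mathcal{Q}}$, proves a Rubio de Francia iteration algorithm and an extrapolation theorem adapted to a fixed collection $\mathcal{Q}$ of dyadic cubes, proves a base-case $(1,q_0)$ weak-type estimate for $I_\alpha^{\mathcal{Q}}$, and invokes Sawyer's two-weight testing reduction to pass from weak to strong type. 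None of this is ``routine bookkeeping''; without it the argument does not close.

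A secondary but real divergence: the paper does not decompose by annuli $\{|x-y|\sim 2^k\rho(x)\}$. It first dominates $L^{-\alpha/2}$ by a dyadic positive operator $I_{\alpha,\theta}^{\mathcal{D}}f=\sum_{Q}\frac{\ell(Q)^\alpha}{\widetilde{\psi}_\theta(Q)}\langle f\rangle_Q Q(\cdot)$, and then stratifies the \emph{cubes} by the size of $\widetilde{\psi}_\theta(Q)$, i.e.\ $\mathcal{Q}_r=\{Q:\widetilde{\psi}_\theta(Q)\simeq 2^{r\theta}\}$. This uses the monotone critical function $\widetilde\rho(Q)=\sup_{x\in Q}\rho(x)$ (so that $\widetilde{\psi}_\theta(Q)^{-1}$ is monotone under inclusion), which is what makes the stratification and the later extrapolation compatible. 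Your $x$-dependent annular decomposition does not have this monotonicity, which is part of why Theorem A cannot be ported over scale by scale. Finally, a small arithmetic slip: the conversion cost on $\mathcal{Q}_r$ should come out to $2^{r\theta/2}$ (which is then beaten by the $2^{-r\theta}$ prefactor), not $2^{k\theta/6}$.
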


Here we point out that the maximal operator associated to the heat semigroup $M^L$ and the Hardy--Littlewood type maximal function
$M^\theta$ satisfy the quantitative estimate as in Theorem B for $\alpha=0$, and that  the fractional maximal function $M^{\theta,\alpha}$ satisfies the quantitative estimate as in Theorem B. Moreover, the fractional integral 
operator $L^{-\frac{\alpha}{2}}f(x)$ satisfies the quantitative estimate as in Theorem A.

However, we remark that the class of weights $A_{p}^{\infty}$ (resp. $A^{\alpha,\theta}_{p,q}$) is associated to $L$, and can be \textit{much}
larger than the standard $A_p$ (resp. $A^{\alpha}_{p,q}$) classes. 
%A typical example is as follows, which is a direct consequence from Theorem \ref{T:bmoas}.
Typical examples are as follows.
\begin{example}\label{T:example}
Consider $L:=-\Delta + 1$ on $\mathbb R^n$. Then we have that $\rho(x)\equiv1$. 
Then consider the function $w(x):=1+|x|^\gamma$ with $\gamma> n(p-1)$.  
We see that $w$ is in $A_p^\infty$, however, $w$ is not in classical $A_p$.
\end{example}
%\begin{proof}

%We state a brief proof. 
%%Fix $1<p$; we will show that there is an $\eta>0$ such that 
%%$e^{\eta\abs{x}^2}\in\BMO_\infty$; the proof that 
%%$e^{-\eta\abs{x}^2}\in\BMO_\infty$ is similar. 
%First, note that $x_j^2$ is in $\BMO_\infty$ 
%for $1\leq j \leq n$ (see \cite{BGS1} for example). Therefore, $\abs{x}^2$ is in $\BMO_\infty$. 
%%That is $\log e^{\abs{x}^2}\in\BMO_\infty$. 
%By Theorem \ref{T:bmoas}, this implies 
%that there is an $\eta > 0$ such that $e^{\eta\abs{x}^2}\in A_p^\infty$. Similarly, there is an $\eta>0$ such that 
%$e^{-\eta\abs{x}^2}\in A_p^\infty$.
%%\end{proof}
%
%This is interesting because the weights $e^{-\eta\abs{x}^2}$ and 
%$e^{\eta\abs{x}^2}$ are \textit{very} different from a typical $A_p$ weight. 
%Indeed, $A_p$ weights are -- roughly -- those weights that neither 
%decay nor grow quickly. The weights $e^{-\eta\abs{x}^2}$ and 
%$e^{\eta\abs{x}^2}$ of course decay and grow very quickly. This is an 
%indication that the weighted theory adapted to $L$ is much different than 
%the classical weighted theory. 

We remark that it might be more precise to decorate the various operators, 
weight classes and other objects we define in this paper with the letter 
``$L$'', but to avoid cumbersome notation, we do not do this.   From the context at hand it should be clear.

In the end, we have a study on  the ``exp-log'' link of $A_p^\infty$
and $\BMO_\infty$. To be more specific, we show that
\begin{theorem}\label{T:bmoas}
(i) If $w\in A_{p}^{\infty}$, then we have $\log w\in \BMO_{\infty}$;

(ii) However, the converse is not true in general.
\end{theorem}

The outline  and structure of the paper is as follows.     In Section \ref{s:2} we recall some fundamental facts
for Schr\"odinger operators with non-negative potential $V$.

In Section \ref{s:mo}, we will 
develop some of the weighted theory associated to the classes 
$A_{p}^\theta$ and $A_{p,q}^{\alpha,\theta}$. We will discuss the 
operators $M^\theta$ and $M^{\theta,\alpha}$ in more detail and prove 
Theorems  \ref{T:sharp2} and \ref{T:sharp}.
A key feature in this section 
is the introduction of a slightly different critical function 
that we denote $\widetilde{\rho}$. There is also the corresponding $\bp{\theta}$
function and $\widetilde{A}_{p,q}^{\alpha,\theta}$ classes. These new 
functions are much less sensitive to the precise location of the cube at 
which they are evaluated. In particular, if $Q\subset Q'$ then there holds 
$\bp{\theta}(Q)^{-1}\leq\bp{\theta}(Q')^{-1}$. This is an important modification 
as it mitigates the non--locality of the Schr\"odinger operator.

In Section \ref{s:fio} we will prove Theorem \ref{T:mainfio}. For this 
section, we will show that $L^{-\frac{\alpha}{2}}$ is dominated by an appropriate 
dyadic operator. An important step in this procedure is organizing the cubes 
in to sub--collections on which $\bp{\theta}(Q)$ is roughly equal to $2^r$ for 
$r\in\mathbb{N}$. This further mitigates the non--locality of the 
Schr\"odinger operator as it allows us to essentially ignore the $\bp{\theta}$ 
function for most of the argument. 

In Section \ref{s:wtdthry}, we recall the definition of BMO spaces associated to $L$ and the related properties. And then we will prove Theorem 
\ref{T:bmoas}. The main technique here for (i) are Jensen inequalities and is similar to the classical 
case. We also point out that in general, the reverse direction ``exp'' is not true.

Finally, in Section \ref{s:con} we give some concluding remarks. In particular, we 
prove some new two weight inequalities for $L^{-\frac{\alpha}{2}}$. We also give some 
potential areas of investigation.

\section{Preliminaries}\label{s:2}
\setcounter{equation}{0}
In this section we set some notation and recall the well-known facts and results 
related to Schr\"odinger operator $L=-\Delta+V$ on $\mathbb R^n$ for $n\geq 3$.

We first recall that for a subset $E$ we will write $E(x)$ for the indicator 
function of $E$; that is $E(x):=\unit_{E}(x)$. If $Q$ is a cube, then 
$\ell (Q)$ will denote the side-length of $Q$. 
\subsection{Reverse H\"older class}

We say that the function  $V$ satisfies a Reverse H\"older property of 
order $\sigma > n/2$ and write $V\in\rh_\sigma$, if  
there exists a positive constant $C$ such that for all cubes $Q$ there holds 
\begin{align}\label{RH}
\left(\frac{1}{\abs{Q}}\int_{Q}V(y)^\sigma dy\right)^{\frac{1}{\sigma}}
\leq \frac{C}{\abs{Q}}\int_{Q}V(y)dy.
\end{align}
For $\sigma=\infty$, the left hand side of \eqref{RH} is replaced by the essential supremum over $B$. It is well-known that elements of $\RH_\sigma$ are doubling measures, and that $\RH_\sigma\subset \RH_{\sigma'}$ whenever $\sigma'<\sigma$ .

\subsection{The critical function $\rho(x)$}

 Associated to 
$V$ we have the critical function $\rho$ introduced in \cite{Sh}, defined by 
\begin{align}\label{rho}
\rho(x):= \Big( \sup\Big\{ r>0:\ {1\over r^{n-2}}
\int_{B(x,r)} V(y)dy \leq 1  \Big\} \Big)^{-1}.
\end{align}
As an example for the harmonic oscillator with $V(x)=|x|^2$, we have $\rho(x)\sim (1+|x|)^{-1}$.

We state the following property of $\rho$; for the proof see \cite{Sh}.
\begin{lemma}\label{Lem1: rho}
Let $\rho$ be the critical radius function associated with $L$ defined in \eqref{rho}. 
\begin{enumerate}[{\upshape (i)}]
\item There exist positive constants $k_0 \ge 1$ and $C_0>0$ so that
$$
 {\rho(x)\over C_0 [\rho(x)+|x-y|]^{k_0}}\leq \rho(y)\leq C_0 \rho(x)[\rho(x)+|x-y|]^{k_0/(1+k_0)},
$$
for all $x,y\in \mathbb{R}^n$.  In particular, for any ball $B\subset \mathbb R^n$, and any $x,y\in B$, we have $\rho(x)\le C_0^2 \bigl(1+\f{r_B}{\rho_B}\bigr)^2\rho(y)$.
 \item There exists $C>0$ and $\sigma_0=\sigma_0(\sigma,n)$ so that
 $$
 \f{1}{r^{n-2}}\int_{B(x,r)}V(y)dy\leq C\Big(\f{r}{R}\Big)^{\sigma_0}\f{1}{R^{n-2}}\int_{B(x,R)}V(y)dy
 $$
 for all $x\in \mathbb{R}^n$ and $R>r>0$. 
 \item For any $x\in \mathbb{R}^n$, we have
 $$
 \f{1}{\rho(x)^{n-2}}\int_{B(x,\rho(x))}V(y)dy=1.
 $$
 \item There exists $C>0$ so that for any $r>\rho(x)$ 
 $$ r^2 \barint_{B(x,\rho(x))} V(y)\,dy \le C \Big(\f{r}{\rho(x)}\Big)^{n_0-n+2}
 $$
 where $n_0$ is the doubling order of $V$. That is, $\int_{2B} V\lesssim 2^{n_0} \int_B V$ for any ball $B$. 
\end{enumerate}
\end{lemma}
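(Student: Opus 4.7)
The plan is to deduce the four assertions from two inputs: the reverse H\"older hypothesis $V\in \RH_\sigma$ with $\sigma>n/2$, and the defining supremum of $\rho$. I would prove them in the order (ii), (iii), (iv), (i), since (ii) and (iii) are the engines that drive the other two.

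\emph{Part (ii).} For $0<r<R$, H\"older's inequality with exponent $\sigma$ yields $\int_{B(x,r)} V\le |B(x,r)|^{1-1/\sigma}\bigl(\int_{B(x,r)} V^\sigma\bigr)^{1/\sigma}$. Monotonicity upgrades the right-hand integral to $\int_{B(x,R)} V^\sigma$, and the reverse H\"older inequality on $B(x,R)$ converts this into a multiple of $|B(x,R)|^{1/\sigma-1}\int_{B(x,R)} V$. Rearranging and dividing by $r^{n-2}$ produces the stated bound with $\sigma_0=2-n/\sigma>0$.

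\emph{Part (iii).} Set $\phi_x(r):=r^{2-n}\int_{B(x,r)} V$, which is continuous in $r$ by dominated convergence, and by (ii) satisfies $\phi_x(r)\to 0$ as $r\to 0^+$. Provided $V$ is nontrivial, $\phi_x$ exceeds $1$ for large $r$, so the supremum $r_\ast$ defining $\rho(x)$ is finite and positive; a standard continuity argument then gives $\phi_x(r_\ast)=1$. \emph{Part (iv).} By (iii), $\barint_{B(x,\rho(x))} V$ equals a dimensional constant times $\rho(x)^{-2}$, and the claim reduces to $(r/\rho(x))^2\le C(r/\rho(x))^{n_0-n+2}$ for $r>\rho(x)$, which holds because the doubling order of $V\,dy$ on $\mathbb{R}^n$ satisfies $n_0\ge n$; alternatively, iterate the doubling hypothesis $\int_{2B} V\le 2^{n_0}\int_B V$ from $B(x,\rho(x))$ up to $B(x,r)$.

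\emph{Part (i).} This is the heart of the lemma and where the main difficulty lies. The idea is to compare $\rho(x)$ and $\rho(y)$ via (ii) and (iii): apply (ii) with the inclusion $B(y,\rho(y))\subset B(x,\rho(y)+|x-y|)$ to obtain a quantitative relation between $\phi_x$ evaluated at $R:=\rho(y)+|x-y|$ and $\phi_y(\rho(y))=1$. Combined with $\phi_x(\rho(x))=1$ and the essential monotonicity of $\phi_x$ that (ii) provides, this yields $\rho(x)\lesssim \rho(y)+|x-y|$ up to a power-like correction. Splitting into the regimes $|x-y|\lesssim \rho(x)$ and $|x-y|\gtrsim \rho(x)$, and iterating (ii) the requisite number of times, produces the H\"older-type exponents $k_0$ and $k_0/(k_0+1)$; the symmetric estimate follows by interchanging $x$ and $y$. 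The main obstacle is the careful bookkeeping of constants and exponents needed to arrive at the precise form stated, which is where most of the technical effort concentrates.
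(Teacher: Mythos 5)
The paper itself does not prove this lemma: it states ``for the proof see \cite{Sh}'' and refers the reader to Shen's original paper, so there is no internal argument to compare against. Your attempt is therefore a genuine addition, and I will evaluate it on its own merits.

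Parts (ii) and (iii) are handled correctly and by the standard route: H\"older plus reverse H\"older gives (ii) with $\sigma_0 = 2 - n/\sigma > 0$, and (iii) follows from continuity of $r\mapsto r^{2-n}\int_{B(x,r)}V$ together with the facts (from (ii)) that it tends to $0$ as $r\to 0^+$ and to $\infty$ as $r\to\infty$ when $V\not\equiv 0$. One small remark on (iii): you should say a word about why the set $\{r:\phi_x(r)\le 1\}$ is bounded, i.e. why $\phi_x(r)\to\infty$; this again follows from (ii) read in the form $\phi_x(R)\ge C^{-1}(R/r)^{\sigma_0}\phi_x(r)$. For (iv), your observation that, read literally, the inequality reduces to $(r/\rho(x))^2\le C(r/\rho(x))^{n_0+2-n}$ and hence to $n_0\ge n$ is a sharp reading of the statement. (In fact the intended statement is almost certainly $r^2\barint_{B(x,r)}V$, not $r^2\barint_{B(x,\rho(x))}V$ --- compare Remark \ref{rem: V estimate} --- and for that version your ``alternative'' of iterating the doubling hypothesis from $B(x,\rho(x))$ out to $B(x,r)$ is exactly right; for the version as literally printed the iteration does not help since the integral does not depend on $r$.)

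Part (i) is the substantive content of the lemma and this is where the gap is. Your sketch names the correct ingredients --- ball inclusions, $\phi_x(\rho(x))=\phi_y(\rho(y))=1$, essential monotonicity via (ii), case-split on $|x-y|$ versus $\rho(x)$ --- but the actual estimate is never carried out, and the intermediate claim ``$\rho(x)\lesssim\rho(y)+|x-y|$'' is not the inequality one needs (it degenerates to a tautology when $|x-y|$ is large, whereas the lemma gives a quantitative lower bound on $\rho(y)$ there). The argument that actually closes the lower bound runs roughly as follows: for $|x-y|>\rho(x)$, use $B(y,\rho(x)/2)\subset B(x,2|x-y|)$, iterate doubling from $\rho(x)$ out to $2|x-y|$ (giving $\phi_x(2|x-y|)\lesssim(|x-y|/\rho(x))^{n_0+2-n}$), conclude $\phi_y(\rho(x)/2)\lesssim(|x-y|/\rho(x))^{n_0}$, and then use (ii) \emph{for $\phi_y$} to shrink the radius until the averaged integral drops below $1$; this yields $\rho(y)\gtrsim\rho(x)\,(|x-y|/\rho(x))^{-n_0/\sigma_0}$, identifying $k_0$ in terms of $n_0$ and $\sigma_0$. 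Your final remark, that the second inequality ``follows by interchanging $x$ and $y$'', is in fact correct once one observes that the interchange gives $\rho(y)\le C\rho(x)(1+|x-y|/\rho(y))^{k_0}$, which is \emph{implicit} in $\rho(y)$; solving it (when $|x-y|>\rho(y)$, raising to the power $1/(1+k_0)$) produces exactly the exponent $k_0/(1+k_0)$ --- but this algebraic step, which is where the asymmetric exponent comes from, is not in your write-up. In short: (ii)--(iv) are fine, but (i) as written is a plan rather than a proof; the iteration argument for the lower bound and the implicit-inequality step for the upper bound both need to appear explicitly. Finally, note that the inequality in (i) as printed in the paper is dimensionally inconsistent; the correct form (as in Shen) has $1+|x-y|/\rho(x)$ in place of $\rho(x)+|x-y|$, and your sketch should be phrased against that form.
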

 \begin{remark}\label{rem: V estimate}
 It follows from Lemma \ref{Lem1: rho} (ii) and (iii) that for any ball $B$,
 \begin{align*}
 	r_B^2\barint_{B}V(y)\,dy \lesssim \left\lbrace\begin{array}{ll}
 		\Bigl(\dfrac{r_B}{\rho_B}\Bigr)^{\sigma_0} \qquad & r_B\le \rho_B,\\[10pt]
 		\Bigl(\dfrac{r_B}{\rho_B}\Bigr)^{n_0+2-n} \qquad & r_B>\rho_B.
 		\end{array}
 		\right. 
 	\end{align*}
 \end{remark}

%\subsection{BMO spaces associated to $L$}
%
%A function $b$ is 
%in $\bmo_{\theta}$ if the following quantity is finite
%\begin{align}
%\norm{b}_{\bmovt}
%:=\sup_{Q\textnormal{ a cube}}
%\frac{1}{\psi_\theta(Q)\abs{Q}}\int_{Q}\abs{b(x)-\avg{b}_{Q}}dx. 
%\end{align}
%As usual we refer to $\norm{\cdot}_{\bmovt}$ as a norm, but constants 
%have zero $\bmovt$ norm. The space $\bmo_\infty$ is defined 
%as $\bmo_\infty:=\cup_{\theta>0}\bmovt$.

\subsection{Heat kernel bounds for $L$}

We now recall the heat kernel upper bounds for the Schr\"odinger 
operator.

Denote by $p_{t,L}(x,y)$ the 
integral kernel of the semigroup $\{e^{-tL}\}_{t>0}$ generated by $-L=\Delta-V$ and 
by $p_{t}(x,y)$ the integral kernel of the semigroup $e^{-t\Delta}$ generated by 
$\Delta$. Then obviously we have
$$ 0\leq p_{t,L}(x,y) \leq p_t(x,y) := (4\pi t)^{-n/2} \exp(-|x-y|^2/4t). $$

%We now consider
%\begin{equation}\label{L}
%V\in RH_\sigma, \ \ \sigma \ge n/2. 
%\end{equation}
% The expression $V \in RH_\sigma$ means that $V$ is a non-negative function that satisfies the reverse H\"older inequality 
% \begin{align}\label{RH} \bigg( {1\over |B|} \int_B V(y)^\sigma dy \bigg)^{1\over \sigma} \leq {C\over |B|} \int_B V(y)dy.   
%\end{align} 
%for some constant $C=C(q,V)$ and every ball $B$. 
%
%
%Define the function $\rho(x)$  as 
%\begin{align}\label{rho}
%\rho(x):= \Big( \sup\Big\{ r>0:\ {1\over r^{n-2}}\int_{B(x,r)} V(y)dy \leq 1  \Big\} \Big)^{-1}.
%\end{align}
%

We recall the well-known heat kernel upper bounds for the Schr\"odinger operator as well as properties for $V$ and its critical radius function $\rho$ as defined in \eqref{rho}.  The following estimates on the heat kernel of $L$ are well-known.
\begin{proposition}[\cites{DZ2,DZ3}]\label{prop-kernelestimates}
	Let $L=-\Delta+V$ with $V\in RH_\sigma$ for some $\sigma\ge n/2$. Then for each $N>0$ there exists $C_N>0$ and $c>0$ such that
	\begin{align}\label{hk bound}
		p_{t,L}(x,y)\le C_N\frac{e^{-|x-y|^2/ct}}{t^{n/2}} \ContainC{1+\frac{\sqrt{t}}{\rho(x)}+\frac{\sqrt{t}}{\rho(y)}}^{-N}
	\end{align}
	and
	\begin{align}\label{hk holder}
		|p_{t,L}(x,y)-p_{t,L}(x',y)|\le C_N\ContainC{\frac{|x-x'|}{\sqrt{t}}}^{\sigma_1} \frac{e^{-|x-y|^2/ct}}{t^{n/2}} \ContainC{1+\frac{\sqrt{t}}{\rho(x)}+\frac{\sqrt{t}}{\rho(y)}}^{-N}
	\end{align}
	whenever $|x-x'|\le \sqrt{t}$ and for any $0<\sigma_1<\sigma$.
\end{proposition}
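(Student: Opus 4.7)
My plan is to establish (\ref{hk bound}) first and then derive (\ref{hk holder}) from it by a Duhamel/perturbative argument. The starting point is the trivial Gaussian upper bound $p_{t,L}(x,y)\le p_t(x,y)$ already noted above the proposition, which follows from the non-negativity of $V$ (either by semigroup domination $e^{-tL}\le e^{t\Delta}$, or directly from the Feynman--Kac formula). For any $c>4$ the Gaussian factor in (\ref{hk bound}) is then already in place, so all the content lies in producing the extra decay factor $(1+\sqrt{t}/\rho(x)+\sqrt{t}/\rho(y))^{-N}$.

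To extract that decay I would use the Feynman--Kac representation
\begin{equation*}
p_{t,L}(x,y) = p_t(x,y)\,\mathbb{E}_{x,y}^{t}\!\left[\exp\!\left(-\int_0^t V(\omega_s)\,ds\right)\right],
\end{equation*}
where the expectation is taken over Brownian bridges of length $t$ from $x$ to $y$, and show that the expectation is $\lesssim (1+\sqrt{t}/\rho(x)+\sqrt{t}/\rho(y))^{-N}$. When $\sqrt{t}\le \rho(x)$ (and similarly for $y$) the right-hand side is $O(1)$ and there is nothing to do, so assume $\sqrt{t}\ge \rho(x)$. Lemma \ref{Lem1: rho}(iii) together with $V\in\RH_\sigma$ yields a Fefferman--Phong type lower bound on $\int_0^{\rho(x)^2}V(\omega_s)\,ds$ on a set of bridge paths of probability bounded below, producing the first power of decay $(\sqrt{t}/\rho(x))^{-1}$. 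Arbitrary $N$ is then obtained by splitting $[0,t]$ into $\sim(\sqrt{t}/\rho(x))^2$ sub-intervals of length $\sim\rho(x)^2$, iterating the semigroup identity $e^{-tL}=(e^{-(t/k)L})^k$ via the Chapman--Kolmogorov relation for $p_{t,L}$, and invoking the slow-variation estimate for $\rho$ in Lemma \ref{Lem1: rho}(i)--(ii) to transfer the localized bound from one step to the next without collecting too many factors of $(1+|x-y|/\rho)$ that would spoil the Gaussian.

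For (\ref{hk holder}) I would use the Duhamel formula
\begin{equation*}
p_{t,L}(x,y) = p_t(x,y) - \int_0^t\!\int_{\mathbb{R}^n} p_{t-s}(x,z)\,V(z)\,p_{s,L}(z,y)\,dz\,ds,
\end{equation*}
so that H\"older continuity in $x$ reduces to (i) the standard H\"older/gradient bounds for the free Gaussian $p_t$, and (ii) control of the perturbation integral using (\ref{hk bound}) together with the reverse H\"older condition on $V$; the exponent $\sigma_1<\sigma$ emerges from this step via a Morrey/Fefferman--Phong integrability argument applied to $V$. The main obstacle in the whole scheme is the upgrade from a weak decay rate (say $N=2$, which is essentially direct from Shen's $L^p$ theory in \cite{Sh}) to arbitrary $N$ in (\ref{hk bound}): this iteration is delicate precisely because $\rho$ is not constant, so each splitting step introduces a shift in the reference point at which the critical function is evaluated, and one must use Lemma \ref{Lem1: rho}(i) carefully to ensure that the cumulative loss across $\sim (\sqrt{t}/\rho(x))^2$ steps remains polynomial and can be absorbed into the target factor.
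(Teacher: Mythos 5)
The paper does not prove this proposition; it is stated as a citation to Dziuba\'nski and Zienkiewicz (the references \cite{DZ2} and \cite{DZ3}), so there is no in-paper argument against which to compare. Your outline --- semigroup domination $p_{t,L}\le p_t$ for the bare Gaussian, Feynman--Kac with a Fefferman--Phong-type lower bound on $\int_0^t V(\omega_s)\,ds$ to extract a first power of decay, a Chapman--Kolmogorov iteration over roughly $(\sqrt{t}/\rho(x))^2$ time slices controlled with the slow-variation estimate of Lemma~\ref{Lem1: rho}(i), and a Duhamel perturbation for the H\"older estimate --- is essentially the strategy used in those cited references (and in Kurata's closely related work), and you have correctly isolated the genuine technical difficulty, namely that $\rho$ drifts from one iteration step to the next and the accumulated factors must stay polynomial.

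One caution, about the statement rather than your strategy: the H\"older range ``for any $0<\sigma_1<\sigma$'' appears to be a misprint. With $V$ only in $RH_\sigma$, the Duhamel/Morrey argument you describe yields H\"older continuity with some small exponent $\sigma_1<\min\{1,\,2-n/\sigma\}$ (a single $\delta_0$ in the original Dziuba\'nski--Zienkiewicz lemma). Since $\sigma\ge n/2$, the range $\sigma_1<\sigma$ would permit $\sigma_1>1$, i.e.\ more than Lipschitz regularity of the kernel, which the perturbative argument cannot deliver and which is false for rough $V$. If you carry out your sketch you will land on the corrected range, not the one printed; you should prove the estimate with $\sigma_1$ restricted accordingly and not try to force $\sigma_1$ up to $\sigma$. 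Relatedly, the exponent $c$ in the Gaussian must be taken strictly larger than $4$ and allowed to degrade a bit at each iteration and in the Duhamel convolution, which is consistent with the way the proposition is phrased (an unspecified $c>0$).
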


% 
% \begin{lemma}\label{L:heatthetawt}
% 
% \end{lemma}
% \begin{proof}
% Let $Q$ be a cube and let $x,y\in Q$. Note that by Lemma \ref{put it} 
% we have $\rho(x)\lesssim\left(1+\frac{\ell (Q)}{\rho(x)}\right)^2\rho(c_Q)$ 
% and $\rho(x)\lesssim\left(1+\frac{\ell (Q)}{\rho(x)}\right)^2\rho(y)$.
% Thus there holds:
% \begin{align*}
% \left(1+\frac{\abs{Q}}{\rho(c_Q)}+\frac{\abs{Q}}{\rho(y)}\right)
% \lesssim \left(1+\frac{\abs{Q}}{\rho(x)}\left(1+\frac{\ell (Q)}{\rho(x)}\right)^2\right)
% \lesssim \left(1+\frac{\ell (Q)}{\rho(x)}\right)^3.
% \end{align*}
% In particular, for any $\theta>0$ we have:
% \begin{align*}
% \left(1+\frac{\abs{Q}}{\rho(c_Q)}+\frac{\abs{Q}}{\rho(y)}\right)^{3\theta}
% \lesssim \widetilde{\Psi}_{\theta}(Q). 
% \end{align*}
% Thus, there holds:
% \begin{align*}
% \frac{1}{\widetilde{\psi}_{\theta}(Q)}\int_{Q}\abs{w(y)}dy
% &\lesssim \frac{1}{\abs{Q}}\int_{Q}\abs{w(y)}
%   e^{-\left(\frac{\abs{c_Q-y}}{\ell (Q)}\right)^2}
%   \left(1+\frac{\abs{Q}}{\rho(c_Q)}+\frac{\abs{Q}}{\rho(y)}\right)^{-3\theta}dy
% %\\&\leq \int_{\R^n} w(y)H_{(\ell{Q})^2}(c_Q,y)dy.
% \end{align*}
% 
% \end{proof}

%\bigskip
\section{Maximal Operators Associated to $L$}\label{s:mo}
In this section, we define some maximal operators associated to $L$ and 
we give quantitative bounds for their norms as operators acting on $L^p(w)$. 
The bounds will be in terms of the classes of weights defined and discussed 
in Section \ref{s:wtdthry}. As above, in the case $V\equiv 0$, all of these 
operators will reduce to the classical Hardy--Littlewood maximal function.
%55and so we recover the theorems of (for example) Buckley and
%Lacey, Moen, P\'erez, and Torres \cites{Buckley1993,LacMoePerTor2010}.

\subsection{New classes of weights associated to $L$}

%The first class of weights that we define are given in terms of the heat kernel. 
%Thus, for a cube $Q\subset \R^n$ we set:
%\begin{align}\label{D:heatAp}
%\sup_{Q\textnormal{ a cube}}
%  \left(\int_{\R^n} w(y)p_{(\ell{Q})^2}(c_Q,y)dy\right)
%  \left(\int_{\R^n}\sigma(y)p_{(\ell{Q})^2}(c_Q,y)dy\right)^{\frac{p}{p'}},
%\end{align}
%where $\sigma = w^{-\frac{1}{p-1}}$. In the case $V\equiv 0$, $p_t(x,y)$ is the 
%heat kernel associated to $-\Delta$ and Petermichl and Volberg show in 
%\cite{PetVol2002} that this is the classical $A_p$ class of Muckenhoupt.

To define the next two classes of weights, we will make use of the critical radius 
function $\rho$. Let $Q$ be a cube and define the following functions:
\begin{align}\label{new rho}
%\rho(Q) &:= \rho(c_Q)
%\hspace{.2in}
%&&\psi_{\theta}(Q) := \left(1+\frac{\ell (Q)}{\rho(Q)}\right)^{\theta}
%\\
\widetilde{\rho}(Q) := \sup_{x\in Q}\rho(x)
\quad{\rm and}\quad
\widetilde{\psi}_{\theta}(Q) 
:= \left(1+ \frac{\ell (Q)}{\widetilde{\rho}(Q)}\right)^{\theta}.
\end{align}

%The next class of $A_p$ weights associated to $L$ are given in terms of 
%$\psi_{\theta}$. Given $p>1$ we define $A_p^{\infty} 
%= \cup_{\theta\geq0} A_p^{\theta}$, where $A_p^{\theta}$ is the set of weights 
%$w$ such that:
%\begin{align}
%[w]_{A_{p}^{\theta}}:=
%\sup_{Q\textnormal{ a cube}}
%\bigg({1\over {\psi}_\theta(Q) |Q| } \int_Q w(y)dy\bigg) 
%\bigg( {1\over {\psi}_\theta(Q) |Q|} 
%\int_Q \sigma(y)  dy   \bigg)^{\frac{p}{p'}} < \infty.
%\end{align}
Using the function $\widetilde{\psi}_{\theta}$ introduced above, we define 
$\widetilde{A}_{p}^{\infty}
:=\cup_{\theta\geq 0}\widetilde{A}_{p}^{\theta}$, where 
$\widetilde{A}_{p}^{\theta}$ is the set of weights $w$ such that:
\begin{align}\label{bigweight}
[w]_{\widetilde{A}_{p}^{\theta}}:=
\sup_{Q\textnormal{ a cube}}
\bigg({1\over \widetilde{\psi}_\theta(Q) |Q| } \int_Q w(y)dy\bigg) 
\bigg( {1\over \widetilde{\psi}_\theta(Q) |Q|} 
\int_Q \sigma(y)  dy   \bigg)^{\frac{p}{p'}} < \infty.
\end{align}

The classes $A_{p}^{\infty}$ and $A_{p}^{\theta}$ were introduced 
in studied further in (for example) \cites{Tang2011,BGS}. The classes 
$\widetilde{A}_{p}^{\infty}$ and 
$\widetilde{A}_{p}^{\theta}$ are -- to our knowledge -- new. We 
need these classes because the standard classes are are very 
``non--local'' in the sense that the functions $\psi_{\theta}$ depend 
on the precise location of $Q$. The functions $\widetilde{\psi}_\theta$ are not 
as sensitive to the precise location of $Q$. The classes $A_{p}^{\theta}$ 
and $\widetilde{A}_{p}^{\theta}$ are related by the following proposition.

\begin{proposition}\label{L:bigwtsmlwt}
For all $\theta \geq 0$ and weights $w$ there holds
\begin{align}\label{E:bigwtsmlwtest}
[w]_{\widetilde{A}_{p}^{3\theta}}
\simeq [w]_{{A}_{p}^{\theta}}, 
\end{align}
and $A_{p}^{\infty}=\widetilde{A}_{p}^{\infty}$. 
\end{proposition}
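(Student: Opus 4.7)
My approach is to compare the two auxiliary functions $\psi_\theta(Q)$ and $\widetilde{\psi}_{3\theta}(Q)$ pointwise using Lemma~\ref{Lem1: rho}(i), and then to read off the equivalence of the weight characteristics; both $[w]_{A_p^\theta}$ and $[w]_{\widetilde{A}_p^{3\theta}}$ are the classical Muckenhoupt product $\avg{w}_Q\avg{\sigma}_Q^{p/p'}$ divided by the $p$-th power of the corresponding $\psi$-function, so a suitable comparability of these two factors for each cube $Q$ will immediately deliver the claim.

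The trivial direction is $\widetilde{\rho}(Q)=\sup_{x\in Q}\rho(x)\geq \rho(c_Q)$, which yields $\widetilde{\psi}_\theta(Q)\leq \psi_\theta(Q)$ pointwise and hence the containment $\widetilde{A}_p^\theta\subseteq A_p^\theta$ with $[w]_{A_p^\theta}\leq [w]_{\widetilde{A}_p^\theta}$. The substantive direction proceeds as follows. Apply Lemma~\ref{Lem1: rho}(i) with $y=c_Q$ and $x\in Q$, so that $|x-y|\lesssim \ell(Q)$; this gives
\[
\rho(x)\lesssim \rho(c_Q)\Bigl(1+\frac{\ell(Q)}{\rho(c_Q)}\Bigr)^{k_0/(k_0+1)}.
\]
Taking the supremum over $x\in Q$, then performing a case split according to whether $\ell(Q)\lesssim \rho(c_Q)$ or $\ell(Q)\gtrsim \rho(c_Q)$, one inverts the resulting upper bound on $\widetilde{\rho}(Q)$ into the lower bound
\[
\Bigl(1+\frac{\ell(Q)}{\widetilde{\rho}(Q)}\Bigr)^{k_0+1}\gtrsim 1+\frac{\ell(Q)}{\rho(c_Q)}.
\]
Since $k_0+1\leq 3$ in the present setting, raising to the $\theta$-th power yields $\widetilde{\psi}_{3\theta}(Q)\gtrsim \psi_\theta(Q)$, and therefore $[w]_{\widetilde{A}_p^{3\theta}}\lesssim [w]_{A_p^\theta}$ together with the containment $A_p^\theta\subseteq \widetilde{A}_p^{3\theta}$.

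Combining the two containments gives the equivalence \eqref{E:bigwtsmlwtest}, and then taking the union of both sides over $\theta\geq 0$ yields $A_p^\infty = \widetilde{A}_p^\infty$: every $w\in A_p^\theta$ lies in $\widetilde{A}_p^{3\theta}\subset\widetilde{A}_p^\infty$, and every $w\in\widetilde{A}_p^{\theta}$ lies in $A_p^\theta\subset A_p^\infty$.

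The main technical obstacle is the inversion step that produces the lower bound on $1+\ell(Q)/\widetilde{\rho}(Q)$: the naive bound $\widetilde{\rho}(Q)\geq \rho(c_Q)$ is inadequate, as it only delivers an \emph{upper} bound on the ratio $1+\ell(Q)/\widetilde{\rho}(Q)$, not the \emph{lower} bound needed to recover $\psi_\theta(Q)$ from $\widetilde{\psi}_{3\theta}(Q)$. Extracting the precise factor $3$ is tied to the standing assumption that $k_0\leq 2$; a different choice of $k_0$ in Lemma~\ref{Lem1: rho}(i) would replace $3$ by $k_0+1$.
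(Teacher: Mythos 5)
Your overall strategy---a pointwise comparison of $\psi_\theta(Q)$ with $\widetilde{\psi}_{3\theta}(Q)$ via Lemma~\ref{Lem1: rho}(i), followed by reading off the comparability of the weight characteristics---is the same as the paper's, and the trivial direction $[w]_{A_p^\theta}\leq[w]_{\widetilde{A}_p^\theta}$ is handled identically. The substantive direction, however, has a genuine gap.

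You apply the raw upper bound of Lemma~\ref{Lem1: rho}(i) with $x=c_Q$ fixed, obtaining $\rho(y)\lesssim\rho(c_Q)\bigl(1+\ell(Q)/\rho(c_Q)\bigr)^{k_0/(k_0+1)}$, take the supremum, and invert. Your inversion and case split are correct and do yield
\begin{align}
\Bigl(1+\frac{\ell(Q)}{\widetilde{\rho}(Q)}\Bigr)^{k_0+1}\gtrsim 1+\frac{\ell(Q)}{\rho(c_Q)},
\end{align}
hence $\psi_\theta(Q)\lesssim\widetilde{\psi}_{(k_0+1)\theta}(Q)$. The problem is the subsequent assertion that ``$k_0+1\leq 3$ in the present setting.'' This is not a standing assumption anywhere in the paper: Lemma~\ref{Lem1: rho}(i) only asserts $k_0\geq 1$, and Shen's exponent $k_0$ depends on the reverse H\"older class of $V$ and can be arbitrarily large. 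So as written you establish $[w]_{\widetilde{A}_p^{(k_0+1)\theta}}\lesssim[w]_{A_p^\theta}$, which is not the comparison with the fixed exponent $3\theta$ that the proposition asserts and that later theorems (e.g.\ Theorem~\ref{T:sharp}) actually quote.

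The paper sidesteps $k_0$ entirely by using the ``In particular'' consequence of Lemma~\ref{Lem1: rho}(i) rather than the raw estimate, and with a crucial choice of the ball: applying $\rho(z)\le C_0^2\bigl(1+r_B/\rho_B\bigr)^2\rho(w)$ with $B$ \emph{centered at $x$} (so that $\rho_B=\rho(x)$, not $\rho(c_Q)$) and $w=c_Q$ gives $\rho(x)\lesssim\bigl(1+\ell(Q)/\rho(x)\bigr)^2\rho(c_Q)$. Rearranging, $1+\ell(Q)/\rho(c_Q)\lesssim\bigl(1+\ell(Q)/\rho(x)\bigr)^{3}$ for every $x\in Q$, and choosing $x$ near the argmax of $\rho$ gives $\psi_\theta(Q)\lesssim\widetilde{\psi}_{3\theta}(Q)$ directly, with $3=2+1$ coming from the fixed exponent in the ``In particular'' statement rather than from $k_0$. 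Replacing your raw-lemma step with this centered-ball application repairs the argument; the inversion and case split you describe then go through unchanged. A secondary remark: your two estimates $[w]_{A_p^\theta}\leq[w]_{\widetilde{A}_p^\theta}$ and $[w]_{\widetilde{A}_p^{3\theta}}\lesssim[w]_{A_p^\theta}$ do not by themselves give the reverse bound $[w]_{A_p^\theta}\lesssim[w]_{\widetilde{A}_p^{3\theta}}$, so ``combining the two containments'' overstates what is proved; the set-theoretic identity $A_p^\infty=\widetilde{A}_p^\infty$ does follow by taking unions over $\theta$, which is the part of the conclusion that is actually used downstream.
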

\begin{proof}
Clearly $[w]_{{A}_{p}^{\theta}}\leq [w]_{\widetilde{A}_{p}^{\theta}}$
and so to prove \eqref{E:bigwtsmlwtest}, it suffices to show that
\begin{align}\label{E:bigwtsmlwtest eee}
[w]_{\widetilde{A}_{p}^{\theta}} \lesssim [w]_{{A}_{p}^{\theta}}.
\end{align} 
Let $Q$ be a cube and let $x\in Q$. Using Lemma \ref{Lem1: rho} we have 
$\rho(x)\lesssim\left(1+\frac{\ell (Q)}{\rho(x)}\right)^2\rho(c_Q)$ and so
\begin{align*}
\psi_\theta(Q)
=\left(1+\frac{\ell (Q)}{\rho(c_Q)}\right)^{\theta}
\lesssim\left(1 + \frac{\ell (Q)}{\rho(x)}\left(1+\frac{\ell (Q)}{\rho(x)}\right)^2\right)^{\theta}
\lesssim \left(1+ \frac{\ell (Q)}{\rho(x)}\right)^{3\theta}.
\end{align*}
Thus $1 /\widetilde{\psi}_{3\theta}(Q)\lesssim 1 /\psi_{\theta}(Q)$ and so 
\eqref{E:bigwtsmlwtest eee} holds.
\end{proof}

We also have a new generalization of the $A_{p,q}$ classes of Muckenhoupt 
and Wheeden \cite{MucWhe1971}. Using the new auxiliary function introduced in \eqref{new rho}, define the $\widetilde{A}_{p,q}^{\alpha,\theta}$ characteristic of a 
weight $w$ by:
\begin{align*}
[w]_{\widetilde{A}_{p,q}^{\alpha,\theta}}
:=\sup_{Q\textnormal{ a cube}}
\left(\frac{1}{\widetilde{\psi}_{\theta}(Q)\abs{Q}}
  \int_{Q}w^q(x)dx\right)
\left(\frac{1}{\widetilde{\psi}_\theta(Q)\abs{Q}}
  \int_{Q}w^{-p'}(x)dx\right)^{\frac{q}{p'}}.
\end{align*}
%and define the $\widetilde{A}_{p,q}^{\theta}$ characteristic 
%similarly but using the $\widetilde{\psi}_\theta$ function.

Similarly, using the new auxiliary function introduced in \eqref{new rho}, for $\theta>0$, and $0\leq\alpha<n$ we define the maximal function $\widetilde{M}^{\theta,\alpha}$ 
associated $L$:
\begin{align*}
\widetilde{M}^{\theta,\alpha}f(x)
:=\sup_{Q}\frac{Q(x)}{\left(\widetilde{\psi}_{\theta}(Q)\abs{Q}\right)^{1-\frac{\alpha}{n}}}
  \int_{Q}\abs{f(y)}dy,
\end{align*}
and in particular, when $\alpha=0$, we denote
$$\hspace{.25in}
\widetilde M^\theta f(x):=\widetilde M^{\theta,0} f(x).
$$

% For a dyadic lattice $\mathcal{D}$ we define the following dyadic version 
% of $M^{\theta,\alpha}$:
% \begin{align}\label{D:dyadmax}
% M_{\mathcal{D}}^{\theta,\alpha}f(x)
% :=\sup_{Q\in\mathcal{D}}\frac{\unit_{Q}(x)}
%   {\left(\pp{\theta}(Q)\abs{Q}\right)^{1-\frac{\alpha}{n}}}
%   \int_{Q}\abs{f(y)}dy,
% \hspace{.25in}
% M_{\mathcal{D}}^\theta f(x):=M_{0}^\theta f(x).
% \end{align}
%We also have $\widetilde{M}^{\theta,\alpha}$ defined similar to the operators above
%but using the $\bp{\theta}$ function instead. 
%Finally, we recall also the heat maximal function $M^L$ associated to $L$:
%\begin{align}\label{D:heatmax}
%M^{L}f(x) 
%:= \sup_{t\geq 0} |e^{-tL}f(x)|.
%\end{align}
% Observe that the maximal operator using the 
% $\bp{\theta}$ function is slightly different 
% than the maximal functions studied in, for example, \cites{Tang2011,BonHarSal2011}.

\subsection{Quantitative Bounds} %\label{s:}
In this section, we will give quantitative bounds for the maximal operators 
defined above. Ideally, we would like to give quantitative bounds for $M^\theta$ 
in terms of the $A_{p}^\theta$ characteristic of the weight, and we would like to 
prove similar assertions for the other maximal operators defined. However, for 
some of the operators it seems the bounds must be given in terms of the 
$A_{p}^\gamma$ characteristic, where $\gamma < \theta$. This is also 
true in the qualitative versions of these theorems in \cites{Tang2011,BGS}.
However, we are able to give the desired quantitative weak bounds. 

%This section is devoted to the proof of the following theorem.
%\begin{theorem}
%With the operators defined as above, $0\leq\alpha<n$, $\frac{1}{p}-\frac{1}{q}
%=\frac{\alpha}{n}$ there holds
%\begin{itemize}
% \item [(1)] For any $\theta > 0$ there is a $C_\theta$ so that
% \begin{align*}
%  \norm{M^{L}:L^p(w) \to L^p(w)}
%  \leq C_\theta \norm{M^{\theta}:L^p(w)\to L^p(w)};
% \end{align*}
% \item[(2)]$w(\{M^\theta > \lambda\})
% \leq [w]_{A_p^{\theta}}\left(\frac{\norm{f}_{L^p(w)}}{\lambda}\right)^{p}$;
% \item [(3)]Let $\gamma = \theta/(1+\frac{p'}{q})$, then
% \begin{align*}
% \norm{M^{\theta,\alpha}:L^p(w^p)\to L^q(w^q)}
% \leq\norm{\widetilde{M}^{\theta,\alpha}:L^p(w^p)\to L^q(w^q)}
% \lesssim [w]_{\widetilde{A}_{p,q}^{\gamma}}^{\frac{p'}{q}(1-\frac{\alpha}{n})}
% \simeq [w]_{{A}_{p,q}^{\frac{\gamma}{3}}}^{\frac{p'}{q}(1-\frac{\alpha}{n})};
% \end{align*}
%\end{itemize}
%\end{theorem}
The proof of Theorem  \ref{T:sharp2} will be by the following two lemmas. The first lemma provides the 
weak-type quantitative estimates of $M^\theta$ as required, the proof of which follows from   the 
standard  Besicovitch covering lemma. As a consequence,  the estimate in $(1)$ of Theorem  \ref{T:sharp2} will be proven.

The second
lemma will establish a pointwise bound that easily implies  
the estimate in $(2)$ of Theorem  \ref{T:sharp2}, the proof of which follows from the pointwise upper bound of the heat kernel.

\begin{lemma}\label{L:weakp}
For $1<p<\infty$, there holds:
\begin{align*}
w\left(\{M^{\theta} f > \lambda\}\right)
\lesssim [w]_{A_{p}^{\theta}}\left(\frac{\norm{f}_{L^p(w)}}{\lambda}\right)^p.
\end{align*}
\end{lemma}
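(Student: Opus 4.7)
\medskip

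The plan is to carry out the classical weak-type $(p,p)$ argument for the Hardy--Littlewood maximal function, adapted to the presence of the factor $\psi_\theta(Q)$. The key observation is that the same weight $\psi_\theta(Q)|Q|$ appears both in the averaging in $M^\theta$ and in the definition of $[w]_{A_p^\theta}$, so the classical computation goes through essentially unchanged.

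First, I would fix $\lambda > 0$ and for each $x$ in the level set $\{M^\theta f > \lambda\}$ select a cube $Q_x \ni x$ such that
\begin{align*}
\frac{1}{\psi_\theta(Q_x)|Q_x|}\int_{Q_x}|f(y)|\,dy > \lambda.
\end{align*}
Applying the Besicovitch covering lemma to this family yields a countable subcollection $\{Q_j\}$ of bounded overlap that still covers the level set, so $w(\{M^\theta f>\lambda\})\le \sum_j w(Q_j)$.

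Next, for each $Q_j$, I would apply H\"older's inequality with exponents $p$ and $p'$ to the integral $\int_{Q_j}|f|$, writing $|f| = (|f|w^{1/p})\cdot w^{-1/p}$. This gives
\begin{align*}
\lambda \psi_\theta(Q_j)|Q_j| < \int_{Q_j}|f|\,dy \leq \Bigl(\int_{Q_j}|f|^p w\,dy\Bigr)^{1/p}\Bigl(\int_{Q_j}w^{-p'/p}\,dy\Bigr)^{1/p'}.
\end{align*}
Raising both sides to the $p$-th power, multiplying by $w(Q_j)$, and rearranging produces the key estimate
\begin{align*}
\lambda^p w(Q_j) \leq \Bigl(\tfrac{1}{\psi_\theta(Q_j)|Q_j|}\int_{Q_j}w\Bigr)\Bigl(\tfrac{1}{\psi_\theta(Q_j)|Q_j|}\int_{Q_j}w^{-p'/p}\Bigr)^{p/p'}\int_{Q_j}|f|^p w\,dy \leq [w]_{A_p^\theta}\int_{Q_j}|f|^p w\,dy.
\end{align*}

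Finally, summing over $j$ and invoking the bounded overlap of $\{Q_j\}$ from Besicovitch, I would conclude
\begin{align*}
\lambda^p w(\{M^\theta f>\lambda\}) \leq \sum_j \lambda^p w(Q_j) \leq [w]_{A_p^\theta}\sum_j \int_{Q_j}|f|^p w\,dy \lesssim [w]_{A_p^\theta}\|f\|_{L^p(w)}^p,
\end{align*}
which is the claimed estimate. There is no real obstacle here beyond ensuring the Besicovitch-type selection works for arbitrary cubes centered at $x$ (which is standard) and checking that the $\psi_\theta(Q_j)$ factors cancel correctly between the Hadr\"od averaging and the $A_p^\theta$ characteristic. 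The argument is almost verbatim the classical one, and the point is that the non-local factor $\psi_\theta$ is absorbed seamlessly because it appears consistently in both $M^\theta$ and $[w]_{A_p^\theta}$.
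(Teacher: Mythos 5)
Your argument is correct, and it gives the same bound, but it takes a genuinely different route from the paper. After the Besicovitch selection, you estimate each cube individually: H\"older's inequality applied to $\int_{Q_j}|f|$ with $|f| = (|f|w^{1/p})w^{-1/p}$, followed by multiplying by $w(Q_j)/(\psi_\theta(Q_j)|Q_j|)^p$ and recognizing the $A_p^\theta$ characteristic. This is the most classical proof of the weak-type $(p,p)$ Muckenhoupt bound, transported verbatim since the $\psi_\theta(Q)$ factors match up between the maximal average and the weight characteristic (as you note, using $p-1=p/p'$). The paper instead runs a duality argument: it bounds $w(Q)$ above by $2\int \frac{w(Q)}{\psi_\theta(Q)|Q|\lambda}f\,Q$, sums over a disjoint Besicovitch subfamily, applies H\"older's inequality \emph{globally} in $L^p(w)$--$L^{p'}(\sigma)$ to the sum $\sum_Q \frac{w(Q)}{\psi_\theta(Q)|Q|\lambda}Q(x)$, uses disjointness to collapse the $p'$-th power, pulls out $[w]_{A_p^\theta}^{1/p}$, and finally divides both sides by $(\sum_Q w(Q))^{1/p'}$. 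Both routes yield the factor $M(n)$ from Besicovitch (absorbed into $\lesssim$). Your version is arguably the more elementary and transparent of the two; the paper's duality formulation is the one that generalizes more naturally to Carleson-embedding and testing-type arguments, but for this lemma there is no advantage. One small point of care, common to both proofs: the cubes $Q_x$ are not centered at $x$ (the sup in $M^\theta$ is over all cubes containing $x$), so the Besicovitch step requires the version for cubes $Q\ni x$ rather than the more commonly stated centered-ball version; the paper glosses over this exactly as you do, so there is no gap relative to the paper's own standard of rigor.
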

\begin{proof}
Let $\Omega_{\lambda}=\{M^\theta f> \lambda\}$ and let $K_\lambda$ be any
compact subset of $\Omega_\lambda$. For every $x\in K_\lambda$ there is a 
cube $Q_x$ containing $x$, such that:
\begin{align*}
\frac{1}{\psi_\theta(Q_x)\abs{Q_x}}\int_{Q_x}\abs{f(y)}dy
>\frac{\lambda}{2}.
\end{align*}
Since this set is compact, by the Besicovitch covering lemma, there is a 
number $M=M(n)$ such that there are $M$ collections of sets 
$\mathcal{Q}_{1},\ldots,\mathcal{Q}_M$ such that each $\mathcal{Q}_j
=\{Q_y:y\in K_\lambda\}$ and the sets in each $\mathcal{Q}_j$ are pairwise 
disjoint. Additionally, $K_\lambda \subset \cup_{j=1}^{M}\mathcal{Q}_j$. 
In other words, $K_\lambda$ is covered by $M$ collections of disjoint cubes. 
Thus it is enough to fix a $1\leq j \leq M$ and set $\mathcal{Q}=\mathcal{Q}_j$
and estimate $\sum_{Q\in\mathcal{Q}}w(Q)$. 

Note that for a $Q\in\mathcal{Q}$ there holds:
\begin{align*}
w(Q)
\leq 2\int_{\R^n} \frac{w(Q)}{{\psi}_{\theta}(Q)\abs{Q}\lambda}f(x)Q(x)dx.
\end{align*}
Using this we have:
\begin{align*}
\sum_{Q\in\mathcal{Q}}w(Q)
&\leq\int_Q \sum_{Q\in\mathcal{Q}}\frac{w(Q)}
  {{\psi}_{\theta}(Q)\abs{Q}\lambda}
  f(x)\sigma(x)^{1/p'}w(x)^{1/p}dx
\\
&\leq \left(\int\abs{\sum_{Q\in\mathcal{Q}}
  \frac{w(Q)}
  {{\psi}_{\theta}(Q)\abs{Q}\lambda}Q(x)}^{p'}\sigma(x)dx\right)^{\frac{1}{p'}}\cdot
  \norm{f}_{L^p(w)}.
\end{align*}
Now, the cubes $Q\in\mathcal{Q}$ are maximal and are thus disjoint. So 
the first term in the right-hand side of the last inequality above is equal to:
\begin{align*}
\frac{1}{\lambda}
  \left(\sum_{Q\in\mathcal{Q}}\frac{w(Q)^{p'-1}\sigma(Q)}
  {{\psi}_{\theta}(Q)^{p'}\abs{Q}^{p'}}w(Q)\right)^{\frac{1}{p'}}
&\leq \frac{[w]_{A_{p}^{\theta}}^{1/p}}{\lambda}
  \left(\sum_{Q\in\mathcal{Q}}w(Q)\right)^{1/p'}.
\end{align*}
Thus there holds:
\begin{align*}
\sum_{Q\in\mathcal{Q}}w(Q)
\leq [w]_{A_{p}^{\theta}}^{1/p}
  \left(\sum_{Q\in\mathcal{Q}}w(Q)\right)^{\frac{1}{p'}}
  \frac{\norm{f}_{L^p(w)}}{\lambda},
\end{align*}
which is the required estimate.
\end{proof}

\begin{lemma}\label{L:heattopsi}
For $\theta\in(0, \infty)$, there exists a constant $C_\theta$ such that for any 
locally integrable function $f$, and for every $x\in\mathbb{R}^n$ and $t>0$, we have
\begin{align}
|e^{-tL}f(x)|\leq C_\theta M^\theta(f)(x).
\end{align}
\end{lemma}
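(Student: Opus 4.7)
The plan is to bound $|e^{-tL}f(x)|$ by integrating the heat kernel against $|f|$, dyadically decomposing the integral around $x$, and then matching each annular piece against a value of $M^\theta f(x)$ tested on a concrete cube centered at $x$. The tool that makes everything converge is Proposition \ref{prop-kernelestimates}, whose factor $(1 + \sqrt{t}/\rho(x))^{-N}$ is at our disposal for arbitrary $N$.

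First, I would write $e^{-tL}f(x) = \int_{\R^n} p_{t,L}(x,y) f(y)\, dy$, drop the $\rho(y)$ term in \eqref{hk bound}, and split $\R^n$ into the annuli $A_0 = B(x,\sqrt{t})$ and $A_k = B(x, 2^k\sqrt{t}) \setminus B(x, 2^{k-1}\sqrt{t})$ for $k \ge 1$. On $A_k$ the Gaussian factor gives $e^{-|x-y|^2/ct} \le e^{-c' 4^k}$, so
\[
\int_{A_k} p_{t,L}(x,y) |f(y)|\, dy \le C_N\, t^{-n/2} e^{-c' 4^k} \Bigl(1 + \tfrac{\sqrt{t}}{\rho(x)}\Bigr)^{-N} \int_{A_k} |f(y)|\, dy.
\]

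Next, let $Q_k$ denote the cube centered at $x$ with side length $2^{k+1}\sqrt{t}$; then $A_k \subset Q_k$ and $|Q_k| \simeq (2^k\sqrt{t})^n$. By definition of $M^\theta$,
\[
\int_{A_k} |f(y)|\, dy \le \psi_\theta(Q_k)\, |Q_k|\, M^\theta f(x),
\]
and since $c_{Q_k} = x$, $\psi_\theta(Q_k) \lesssim \bigl(1 + 2^k \sqrt{t}/\rho(x)\bigr)^\theta$. The elementary inequality $1 + 2^k a \le 2^k(1+a)$ then gives
\[
\Bigl(1 + \tfrac{\sqrt{t}}{\rho(x)}\Bigr)^{-N} \psi_\theta(Q_k) \lesssim 2^{k\theta} \Bigl(1 + \tfrac{\sqrt{t}}{\rho(x)}\Bigr)^{\theta - N}.
\]
Choosing $N \ge \theta$ makes the last factor $\le 1$, so combining all of the above the $k$-th piece is bounded by $C\, 2^{k(n+\theta)} e^{-c' 4^k} M^\theta f(x)$.

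Finally I would sum in $k$; the Gaussian-type decay $e^{-c' 4^k}$ absorbs the polynomial factor $2^{k(n+\theta)}$, yielding
\[
|e^{-tL}f(x)| \le C_\theta M^\theta f(x),
\]
as desired. The main obstacle is the third step, namely reconciling the $N$-dependent heat-kernel decay with the growth of $\psi_\theta(Q_k)$ in $k$; the freedom to pick $N$ as large as we wish (at the price of a larger constant) is what rescues the argument. Everything else is standard annular decomposition.
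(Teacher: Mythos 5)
Your proof is correct and takes essentially the same route as the paper's: dyadically decompose the heat-kernel integral into annuli centered at $x$, bound each annulus by the $\psi_\theta$-averaged integral over the corresponding cube (or ball), and use the adjustable exponent $N$ in the heat-kernel decay of Proposition~\ref{prop-kernelestimates} together with the Gaussian factor to absorb the $2^{k\theta}$ growth and sum the series. The paper simply fixes the kernel exponent to be $\theta$ from the outset and uses the elementary inequality $1+2^{j}a \geq 1+a$ to shift the $\rho(x)$ factor onto the dilated ball, but this is the same idea as your choice $N\geq\theta$.
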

\begin{proof}
For any fixed $x\in\mathbb{R}^n$ and $t>0$,
\begin{align*}
|e^{-tL}f(x)|&\leq \int_{\mathbb{R}^n} p_t(x,y)|f(y)|dy\\
&\leq  C_\theta \int_{\mathbb{R}^n} \frac{e^{-|x-y|^2/ct}}{t^{n/2}} \ContainC{1+\frac{\sqrt{t}}{\rho(x)}+\frac{\sqrt{t}}{\rho(y)}}^{-\theta}|f(y)|dy,
\end{align*}
where $\theta$ is any positive constant.

We now denote by $B:= B(x,\sqrt{t})$ the ball in $\mathbb{R}^n$ centered at $x$ with radius $\sqrt t$. Then we have
\begin{align*}
|e^{-tL}f(x)|&\leq \int_{\mathbb{R}^n} p_t(x,y)|f(y)|dy\\
&\leq  C_\theta \sum_{j=1}^\infty \int_{2^jB\backslash 2^{j-1}B} \frac{e^{-|x-y|^2/ct}}{t^{n/2}} \ContainC{1+\frac{\sqrt{t}}{\rho(x)}+\frac{\sqrt{t}}{\rho(y)}}^{-\theta}|f(y)|dy\\
&\quad + C_\theta  \int_{B} \frac{e^{-|x-y|^2/ct}}{t^{n/2}} \ContainC{1+\frac{\sqrt{t}}{\rho(x)}+\frac{\sqrt{t}}{\rho(y)}}^{-\theta}|f(y)|dy\\
&\leq  C_\theta\sum_{j=1}^\infty 2^{j\theta} e^{-c2^{2(j-1)}} {1\over |B|}\int_{2^jB\backslash 2^{j-1}B}  
\ContainC{1+\frac{2^j\sqrt{t}}{\rho(x)}}^{-\theta}|f(y)|dy\\
&\quad + C_\theta  {1\over \widetilde{\psi}_N(B) |B|}\int_{B}  |f(y)|dy\\
&\leq  C_\theta\sum_{j=1}^\infty 2^{j(\theta+n)} e^{-c2^{2(j-1)}} {1\over \widetilde{\psi}_\theta(2^jB) |2^jB|}\int_{2^jB\backslash 2^{j-1}B}  
|f(y)|dy\\
&\quad + C_\theta M^\theta(f)(x) \\
&\leq C_\theta M^\theta(f)(x).
\end{align*}
\end{proof}

%In the third assertion, 

To prove Theorem \ref{T:sharp}, we first note that
$$\norm{M^{\theta,\alpha}:L^p(w^p)\to L^q(w^q)}
 \leq\norm{\widetilde{M}^{\theta,\alpha}:L^p(w^p)\to L^q(w^q)}$$
follows easily from the definitions of
$M^{\theta,\alpha}$ and $\widetilde{M}^{\theta,\alpha}$. Hence,
only 
the estimate 
$$\norm{\widetilde{M}^{\theta,\alpha}:L^p(w^p)\to L^q(w^q)}
 \lesssim [w]_{\widetilde{A}_{p,q}^{\alpha,\gamma}}^{\frac{p'}{q}(1-\frac{\alpha}{n})}$$
needs to be shown.   And then, Theorem \ref{T:sharp} follows from the above quantitative estimates and from
 Proposition
\ref{L:bigwtsmlwt}, which shows that
$[w]_{\widetilde{A}_{p,q}^{\alpha,\gamma}}\simeq [w]_{{A}_{p,q}^{\alpha,\frac{\gamma}{3}}}$.

To begin with, we need the following two universal bounds for 
weighted maximal functions. Let $\mu$ be a weight and $0\leq\alpha<n$. Define
\begin{align*}
M_{\mu}^{\alpha}f(x)
:=\sup_{Q}\frac{Q(x)}{\mu(Q)^{1-\frac{\alpha}{n}}}\int_{Q}\abs{f(y)}\mu(y)dy
\hspace{.25in}
M_{\mu}f(x):=M_{\mu}^{0}f(x).
\end{align*}
There holds
\begin{lemma}\label{L:univwtfrac}
Let $\mu$ be a weight and let $0\leq\alpha<n$ and $\frac{1}{p}-\frac{1}{q}=\frac{\alpha}{n}$. 
Then $$\norm{M_{\mu}^{\alpha}:L^p(\mu)\to L^q(\mu)}\lesssim 1,$$ where the implied 
constant does not depend on $\mu$. 
\end{lemma}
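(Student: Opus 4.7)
The plan is to reduce to the case $\alpha = 0$ via a pointwise interpolation inequality.

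\textbf{Step 1 (Case $\alpha = 0$).} Here $p=q$, and I wish to show that $M_\mu$ is bounded on $L^p(\mu)$ with a constant depending only on $n$ and $p$. The weak-type $(1,1)$ estimate
\begin{equation*}
\mu(\{M_\mu f > \lambda\}) \leq \frac{C_n}{\lambda}\,\|f\|_{L^1(\mu)}
\end{equation*}
holds with $C_n$ depending only on dimension, by the Besicovitch covering argument used already in the proof of Lemma~\ref{L:weakp} (applied with the trivial weight $\psi_\theta \equiv 1$ and with $\mu$ replacing Lebesgue measure in the averages). Together with the trivial bound $\|M_\mu f\|_{L^\infty(\mu)} \leq \|f\|_{L^\infty(\mu)}$, Marcinkiewicz interpolation yields $\|M_\mu f\|_{L^p(\mu)} \lesssim_{n,p} \|f\|_{L^p(\mu)}$, with a constant independent of $\mu$.

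\textbf{Step 2 (Pointwise interpolation bound).} For $\alpha > 0$, fix $x$ and a cube $Q \ni x$. Write
\begin{equation*}
\frac{1}{\mu(Q)^{1-\alpha/n}}\int_Q |f|\,d\mu
= \mu(Q)^{\alpha/n}\left(\frac{1}{\mu(Q)}\int_Q |f|\,d\mu\right)^{p/q}\left(\frac{1}{\mu(Q)}\int_Q |f|\,d\mu\right)^{1-p/q}.
\end{equation*}
For the first factor in parentheses use the trivial bound $\frac{1}{\mu(Q)}\int_Q|f|\,d\mu \leq M_\mu f(x)$; for the second factor use H\"older's inequality $\frac{1}{\mu(Q)}\int_Q|f|\,d\mu \leq \mu(Q)^{-1/p}\|f\|_{L^p(\mu)}$. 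Since $1/p - 1/q = \alpha/n$, the powers of $\mu(Q)$ satisfy $\alpha/n - (1-p/q)/p = 0$, so they cancel. Taking the supremum over cubes $Q\ni x$ gives
\begin{equation*}
M_\mu^\alpha f(x) \leq M_\mu f(x)^{p/q}\,\|f\|_{L^p(\mu)}^{\,1-p/q}.
\end{equation*}

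\textbf{Step 3 (Conclusion).} Raise both sides to the $q$-th power, integrate against $d\mu$, and invoke Step~1:
\begin{equation*}
\|M_\mu^\alpha f\|_{L^q(\mu)}^{\,q} \leq \|f\|_{L^p(\mu)}^{\,q-p}\,\|M_\mu f\|_{L^p(\mu)}^{\,p} \lesssim_{n,p} \|f\|_{L^p(\mu)}^{\,q}.
\end{equation*}
Taking $q$-th roots yields the claim, with implicit constant depending only on $n$, $p$, $\alpha$.

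The only point requiring care is the universality of the constant in Step~1, and this is really just a matter of observing that the Besicovitch covering number and the Marcinkiewicz interpolation constant depend only on $n$ and $p$, never on $\mu$. Everything else is then essentially algebraic (the exponent accounting in Step~2) and functional (Step~3 is one line). So I do not anticipate any serious obstacle; the lemma is genuinely a ``universal'' statement whose force comes from the elementary interpolation identity in Step~2.
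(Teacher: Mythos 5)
Your Steps 2 and 3 are exactly the classical Hedberg-type pointwise interpolation, and they are algebraically correct: the exponent bookkeeping $\alpha/n - (1-p/q)/p = 0$ goes through since $1/p - 1/q = \alpha/n$, and then raising to the $q$-th power and invoking the $\alpha=0$ case finishes. This is almost certainly the same mechanism as the cited Lemma~4.1 of Lacey--Moen--P\'erez--Torres, which the paper quotes without proof (along with Doob's inequality for $\alpha=0$). So you have filled in a proof the paper merely references, and your reduction to $\alpha=0$ is the right one.

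The one point that deserves more care is Step 1, and it is not cosmetic. The Besicovitch covering theorem, with a constant $C_n$ depending only on dimension, applies to families of cubes (or balls) \emph{centered} at the points of the set. For the uncentered maximal function $\sup_{Q\ni x}\frac{1}{\mu(Q)}\int_Q |f|\,d\mu$ over all cubes merely containing $x$, a universal weak $(1,1)$ bound is \emph{not} available for arbitrary (non-doubling) $\mu$ when $n\ge 2$; the usual device of replacing $Q\ni x$ by the concentric cube $3Q$ centered at $x$ loses a factor $\mu(3Q)/\mu(Q)$, which is uncontrolled without doubling. The definitions in the paper write $Q(x)=\unit_Q(x)$, i.e.\ cubes containing $x$, so taken literally your invocation of Besicovitch is not justified. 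The resolution -- implicit in the paper's own citations -- is that $M_\mu$ and $M_\mu^\alpha$ are meant to be dyadic (Doob's maximal inequality \emph{is} the dyadic case, and LMPT's Lemma~4.1 is stated for the dyadic fractional maximal operator) or, at most, over cubes centered at $x$ (which is exactly what the paper uses in the concluding step of the theorem following the lemma, where it ``take[s] a supremum over all cubes centered at $x$''). In the dyadic case Step~1 is even simpler than you wrote: the maximal stopping cubes are automatically pairwise disjoint, so no covering lemma is needed, and Doob gives the $L^p$ bound directly without Marcinkiewicz. You should either state that the cubes are dyadic/centered and dispense with Besicovitch, or explain why the covering argument applies to non-centered cubes; as written, Step~1 is the one place where the claim ``the constant does not depend on $\mu$'' is actually in jeopardy.
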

When $\alpha = 0$, this is the well--known Doob maximal inequality. For 
$0<\alpha<n$ see \cite{LacMoePerTor2010}*{Lemma 4.1}. We will use these facts to 
prove the following theorem. 
\begin{theorem}
Let $0\leq\alpha < n$ and $\frac{1}{p}-\frac{1}{q}=\frac{\alpha}{n}$. There holds
\begin{align*}
\norm{\widetilde{M}^{\theta,\alpha}:L^p(w^p)\to L^q(w^q)}
\lesssim [w]_{\widetilde{A}_{p,q}^{\alpha,\gamma}}^{\frac{p'}{q}(1-\frac{\alpha}{n})}.
\end{align*}
\end{theorem}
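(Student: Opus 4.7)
I plan to adapt the classical proof of Lacey--Moen--P\'erez--Torres \cite{LacMoePerTor2010} for the sharp bound on the fractional maximal operator, carefully managing the auxiliary factor $\widetilde{\psi}_\theta$ present in our Schr\"odinger setting. First, I would reduce to bounding the dyadic version
\begin{align*}
\widetilde{M}^{\theta,\alpha,\mathcal{D}}f(x):=\sup_{Q\in\mathcal{D},\,Q\ni x}\frac{1}{(\widetilde{\psi}_\theta(Q)|Q|)^{1-\alpha/n}}\int_Q|f(y)|\,dy
\end{align*}
over a fixed dyadic grid $\mathcal{D}$; the general operator $\widetilde{M}^{\theta,\alpha}$ is dominated by a sum of such dyadic versions over finitely many shifted grids. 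Then I would linearize via a Calder\'on--Zygmund stopping time: with $a:=2^{(n+\theta)(1-\alpha/n)+1}$, set $\Omega_k:=\{\widetilde{M}^{\theta,\alpha,\mathcal{D}}f>a^k\}$, let $\{Q_{k,j}\}_j$ be the maximal dyadic cubes in $\Omega_k$, and $E_{k,j}:=Q_{k,j}\setminus\Omega_{k+1}$. The doubling bound $\widetilde{\psi}_\theta(\hat{Q})\leq 2^\theta\widetilde{\psi}_\theta(Q)$ for the dyadic parent $\hat{Q}$ (an immediate consequence of $\widetilde{\rho}(Q)\leq\widetilde{\rho}(\hat{Q})$) together with the choice of $a$ yields the sparseness $|E_{k,j}|\geq \tfrac12|Q_{k,j}|$.

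Next, writing $\sigma:=w^{-p'}$, $u:=w^q$, and $h:=f/\sigma$ (so that $\|h\|_{L^p(\sigma)}=\|f\|_{L^p(w^p)}$), and setting $F_Q:=\sigma(Q)^{-1}\int_Q h\,d\sigma$ so that $\int_Q f=F_Q\sigma(Q)$, the standard linearization leads to
\begin{align*}
\|\widetilde{M}^{\theta,\alpha,\mathcal{D}}f\|_{L^q(u)}^q\lesssim \sum_{k,j}\frac{F_{Q_{k,j}}^q\,\sigma(Q_{k,j})^q}{(\widetilde{\psi}_\theta(Q_{k,j})|Q_{k,j}|)^{q(1-\alpha/n)}}\,u(E_{k,j}).
\end{align*}
I would then use the identity $q(1-\alpha/n)=1+q/p'$, the $\widetilde{A}_{p,q}^{\alpha,\gamma}$ condition in the form $\sigma(Q)^{q/p'}u(Q)\leq [w]_{\widetilde{A}_{p,q}^{\alpha,\gamma}}(\widetilde{\psi}_\gamma(Q)|Q|)^{q(1-\alpha/n)}$, and the inequality $\widetilde{\psi}_\gamma\leq\widetilde{\psi}_\theta$ (valid for $\gamma\leq\theta$) to extract a factor of $[w]_{\widetilde{A}_{p,q}^{\alpha,\gamma}}$. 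Factoring $\sigma(Q)^q=\sigma(Q)^{q/p'}\sigma(Q)^{q/p}$ and using $q/p-q\alpha/n=1$ rewrites $\sigma(Q)^{q/p}F_Q^q=\sigma(Q)(F_Q\sigma(Q)^{\alpha/n})^q$; since $F_Q\sigma(Q)^{\alpha/n}\leq M_\sigma^\alpha h(x)$ for every $x\in Q$, Lemma \ref{L:univwtfrac} applied to $M_\sigma^\alpha$ will then control the right-hand side by $\|h\|_{L^p(\sigma)}^q=\|f\|_{L^p(w^p)}^q$, provided one can pass from $\sigma(Q_{k,j})$ to $\sigma(E_{k,j}^\sigma)$ for suitable disjoint $E_{k,j}^\sigma\subset Q_{k,j}$.

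The sharp exponent $(1-\alpha/n)p'/q$, rather than the naive exponent $1$ obtained from the bare $\widetilde{A}_{p,q}^{\alpha,\gamma}$ application, will be extracted by a H\"older/interpolation step that combines the $\widetilde{A}_{p,q}^{\alpha,\gamma}$ bound with an $A_\infty$-type comparison of $\sigma(Q_{k,j})$ with $\sigma(E_{k,j})$; the latter follows from the Lebesgue sparseness of $\{E_{k,j}\}$ and an $A_\infty$ property of $\sigma$ implied by membership in $\widetilde{A}_{p,q}^{\alpha,\gamma}$. The principal obstacle will be tracking the various powers of $\widetilde{\psi}_\theta$ and $\widetilde{\psi}_\gamma$ through this manipulation so that the ratio $(\widetilde{\psi}_\gamma/\widetilde{\psi}_\theta)^{q(1-\alpha/n)}$ remains bounded while the $A_\infty$-comparison also goes through cleanly; the choice $\gamma=\theta/(1+p'/q)$ from Theorem \ref{T:sharp} precisely balances the contribution from the $\widetilde{A}_{p,q}^{\alpha,\gamma}$-part against the $A_\infty$/Carleson-embedding part of the argument, and the monotonicity property $\widetilde{\psi}_\theta(Q)^{-1}\leq\widetilde{\psi}_\theta(Q')^{-1}$ for $Q\subset Q'$ is what makes these estimates survive when passing between nested cubes in the stopping tree.
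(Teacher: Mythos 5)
Your proposed route — dyadic reduction, Calder\'on--Zygmund stopping time, and a sparse-type estimate closed by an $A_\infty$ comparison — is genuinely different from the paper's, but it contains a gap that I do not believe can be repaired within the framework you describe. The critical step is where you pass from the Lebesgue sparseness $\abs{E_{k,j}}\ge\tfrac12\abs{Q_{k,j}}$ to a $\sigma$-sparseness $\sigma(E_{k,j}^\sigma)\gtrsim\sigma(Q_{k,j})$, which you justify by appealing to ``an $A_\infty$ property of $\sigma$ implied by membership in $\widetilde A_{p,q}^{\alpha,\gamma}$.'' That implication is false in this setting: the whole point of the Schr\"odinger-adapted classes is that they are strictly larger than the classical ones, and they admit weights whose dual weight $\sigma=w^{-p'}$ is not in classical $A_\infty$. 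Concretely, Example \ref{T:example} shows that $w(x)=e^{\eta\abs{x}^2/p'}$ lies in $A_p^\infty$ for suitable $\eta>0$, and then $\sigma(x)=w(x)^{-p'}=e^{-\eta\abs{x}^2}$. On a unit cube $Q$ centered at $x_0$ with $\abs{x_0}$ large, essentially all of the $\sigma$-mass sits on the face of $Q$ nearest the origin, so the half of $Q$ farthest from the origin has half the Lebesgue measure of $Q$ but an exponentially small fraction of the $\sigma$-measure. Thus $\sigma\notin A_\infty$, the Lebesgue-to-$\sigma$ comparison has no uniform constant, and your Carleson-embedding step does not close. Even if $\sigma$ happened to be $A_\infty$, you would pick up a factor $[\sigma]_{A_\infty}$ that in the classical case is absorbed via $[\sigma]_{A_\infty}\le[w]_{A_{p,q}^\alpha}^{p'/q}$; in the present setting there is no comparable quantitative control of $[\sigma]_{A_\infty}$ by $[w]_{\widetilde A_{p,q}^{\alpha,\gamma}}$, so the exponent bookkeeping cannot be completed.

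The paper avoids this obstruction entirely by following the iterated-maximal-function argument of Lacey--Moen--P\'erez--Torres for the \emph{fractional maximal function} (their Theorem 2.3), rather than the sparse-plus-$A_\infty$ argument they use for the fractional \emph{integral} operator. Concretely, for any fixed cube $Q$ one writes the average $\bigl(\widetilde\psi_\theta(Q)\abs{Q}\bigr)^{\alpha/n-1}\int_Q\abs{f}$ as a product of three factors: the first is exactly $\bigl\{(u(Q)/(\widetilde\psi_\gamma(Q)\abs Q))^{p'/q}(\sigma(Q)/(\widetilde\psi_\gamma(Q)\abs Q))\bigr\}^{1-\alpha/n}$, which is controlled by $[w]_{\widetilde A_{p,q}^{\alpha,\gamma}}^{(p'/q)(1-\alpha/n)}$ once $\gamma$ is chosen so that $\widetilde\psi_\gamma(Q)^{1+p'/q}=\widetilde\psi_\theta(Q)$ (this is precisely where $\gamma=\theta/(1+p'/q)$ comes from; it is an exact algebraic identity, not a comparison needing monotonicity); the remaining two factors are H\"oldered into a composition $M_u\{M_{\alpha,\sigma}(f\sigma^{-1})^{q/r'}u^{-1}\}(x)^{r'/q}$. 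The punchline is that Lemma \ref{L:univwtfrac} gives $\norm{M_\mu^\alpha:L^p(\mu)\to L^q(\mu)}\lesssim 1$ for \emph{arbitrary} weights $\mu$, with no $A_\infty$ or doubling hypothesis whatsoever, so no property of $u$ or $\sigma$ beyond the $\widetilde A_{p,q}^{\alpha,\gamma}$ bound itself is needed. This is what makes the sharp exponent achievable over the larger Schr\"odinger weight class, and it is the reason the paper uses this pointwise route rather than a stopping-time decomposition. If you want to salvage the spirit of your approach you would need to replace the Lebesgue stopping time with one adapted to $\sigma$ from the outset so that the $\sigma$-sparseness is built in by maximality rather than by $A_\infty$; but at that point you have essentially re-derived the Doob inequality underlying Lemma \ref{L:univwtfrac}, and the cleaner path is the paper's.
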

\begin{proof}
The proof of the following theorem follows the corresponding proof 
in \cite{LacMoePerTor2010}.
Let $u=w^q$ and $\sigma=w^{-p'}$ and $r=1+\frac{q}{p'}$. There holds 
$\frac{p'}{q}(1-\frac{\alpha}{n})=\frac{r'}{q}$. For any 
cube $Q$ we have 
\begin{align*}
&\frac{1}{\left(\widetilde{\psi}_\theta(Q)\abs{Q}\right)^{1-\frac{\alpha}{n}}}
  \int_{Q}\abs{f(y)}dy\\
&=\left(\frac{\sigma(Q)u(Q)^{\frac{p'}{q}}}
  {\widetilde{\psi}_\theta(Q)\abs{Q}^{1+\frac{p'}{q}}}\right)^{1-\frac{\alpha}{n}}
  \left(\frac{\abs{Q}}{u(Q)}\right)^{\frac{p'}{q}(1-\frac{\alpha}{n})}
  \frac{1}{\sigma(Q)^{1-\frac{\alpha}{n}}}\int_{Q}\abs{f(y)}dy.
\end{align*}
Let $\gamma$ satisfy $\gamma\frac{p'}{q}+\gamma=\theta$ (i.e. $\gamma=\frac{\theta}
{1+p'/q}$). Then this becomes
\begin{align*}
\left\{
\left(\frac{u(Q)}{\psi_\gamma(Q)\abs{Q}}\right)^{\frac{p'}{q}}
\left(\frac{\sigma(Q)}{\widetilde{\psi}_\gamma(Q)\abs{Q}}\right)
\right\}^{1-\frac{\alpha}{n}}
\left(\frac{\abs{Q}}{u(Q)}\right)^{\frac{p'}{q}(1-\frac{\alpha}{n})}
  \frac{1}{\sigma(Q)^{1-\frac{\alpha}{n}}}\int_{Q}\abs{f(y)}dy.
\end{align*}
Estimating the first factor from above by 
$[w]_{\widetilde{A}_{p,q}^{\alpha,\theta}}^{\frac{p'}{q}(1-\frac{\alpha}{n})}$, this is dominated by
\begin{align*}
[w]_{\widetilde{A}_{p,q}^{\alpha,\gamma}}^{\frac{p'}{q}(1-\frac{\alpha}{n})}
  \left(
  \frac{\abs{Q}}{u(Q)}
  \right)^{\frac{p'}{q}(1-\frac{\alpha}{n})}
  \frac{1}{\sigma(Q)^{1-\frac{\alpha}{n}}}\int_{Q}
  \frac{\abs{f(y)}}{\sigma(y)}\sigma(y)dy.
\end{align*}
Applying H\"older's Inequality with exponents $q/r'$ and $(q/r')'= (1-\frac{r'}{q})^{-1}$ 
and noting that $1-\frac{r'}{q}=1-\frac{p'}{q}(1-\frac{\alpha}{n})$, this last expression is then 
dominated by
\begin{align*}
[w]_{\widetilde{A}_{p,q}^{\alpha,\gamma}}^{\frac{p'}{q}(1-\frac{\alpha}{n})}
\left(\frac{1}{u(Q)}\int_{Q}M_{\sigma}^{\alpha}(f\sigma^{-1})^{q/r'}dx\right)^{r'/q}.
\end{align*}
Taking a supremum over all cubes centered at $x$ we have the pointwise inequality
\begin{align*}
\widetilde{M}^{\theta,\alpha}f(x)
\leq [w]_{\widetilde{A}_{p,q}^{\alpha,\gamma}}^{\frac{p'}{q}(1-\frac{\alpha}{n})}
  M_{u}\left\{
  M_{\alpha,\sigma}(f\sigma^{-1})^{q/r'}u^{-1}\right\}(x)^{r'/q}.
\end{align*}
Thus, we have
\begin{align*}
\norm{\widetilde{M}^{\theta,\alpha}f}_{L^q(w^q)}
&=\norm{\widetilde{M}^{\theta,\alpha}f}_{L^q(u)}
\\&\leq[w]_{\widetilde{A}_{p,q}^{\alpha,\gamma}}^{\frac{p'}{q}(1-\frac{\alpha}{n})}
  \norm{
  M_{u}\left\{
  M_{\alpha,\sigma}(f\sigma^{-1})^{q/r'}u^{-1}\right\}
  }_{L^{r'}(u)}^{\frac{r'}{q}}
\\&\lesssim[w]_{\widetilde{A}_{p,q}^{\alpha,\gamma}}^{\frac{p'}{q}(1-\frac{\alpha}{n})}
  \norm{
  M_{\alpha,\sigma}(f\sigma^{-1})^{q/r'}u^{-1}
  }_{L^{r'}(u)}^{\frac{r'}{q}}.
\end{align*}
Let $s=\frac{pr'}{q}$. Then $\frac{1}{s}-\frac{1}{r}=\frac{\alpha}{n}$ and so 
by Lemma \ref{L:univwtfrac} there holds
\begin{align*}
\norm{M_{\alpha,\sigma}(f\sigma^{-1})^{q/r'}u^{-1}}_{L^{r'}(u)}^{\frac{r'}{q}}
&=\norm{M_{\alpha,\sigma}(f\sigma^{-1})^{q/r'}}_{L^{r'}(u^{-\frac{r'}{r}})}^{\frac{r'}{q}}
\lesssim\norm{(f\sigma^{-1})^{q/r'}}_{L^{s}(u^{-\frac{r'}{r}})}^{\frac{r'}{q}}.
\end{align*}
Observe that $\sigma^{-s\frac{q}{r'}}u^{-\frac{r'}{r}}=w^p$ (do this by writing 
$\sigma$ and $u$ in terms of $w$ and then do some gymnastics with the 
H\"older exponents), and clearly $\abs{f}^{s\frac{q}{r'}}=\abs{f}^{p}$ and so 
this last line is equal to $\norm{f}_{L^p(w^p)}$ as desired. 
\end{proof}

\section{Fractional Integral Operator}\label{s:fio}
The goal of this section is to prove Theorem \ref{T:mainfio}. We first 
recall some definitions. The heat semigroup associated to  
$L$ is a family of operators given by $H_tf(x):=e^{-tL}f(x)$. For $0<\alpha < n$ 
using the functional calculus we can write $L^{-\frac{\alpha}{2}}$ as an integral 
operator:
\begin{align*}
L^{-\frac{\alpha}{2}}f(x)
=\int_{0}^{\infty}e^{-tL}f(x)t^{\alpha /2 - 1}dt.
\end{align*} 
%We also have a version of the $A_{p,q}$ classes of Muckenhoupt--Wheeden 
%adapted to $L$. Let $p>1$ and let $q$ be defined by 
%$\frac{1}{p}-\frac{1}{q}=\frac{\alpha}{n}$. A weight $w$ is in 
%$A_{p,q}^{\alpha,\theta}$ if its $A_{p,q}^{\alpha,\theta}$ characteristic denoted by 
%$[w]_{A_{p,q}^{\alpha,\theta}}$ and defined below is finite
%\begin{align*}
%[w]_{A_{p,q}^{\alpha,\theta}}
%:=\sup_{Q\textnormal{ a cube}}
%\left(\frac{1}{\psi_{\theta}(Q)\abs{Q}}
%  \int_{Q}w^q(x)dx\right)
%\left(\frac{1}{\psi_\theta(Q)\abs{Q}}
%  \int_{Q}w^{-p'}(x)dx\right)^{\frac{q}{p'}}.
%\end{align*}
We will prove a quantitative version of a theorem of Tang \cite{Tang2011}. 
This is a version of the theorem of Lacey--Moen--Torres--P\'erez 
adapted to our setting \cite{LacMoePerTor2010}.
\begin{theorem}
Let $1<p<\frac{n}{\alpha}$ and $q$ be defined by the equation 
$\frac{1}{q}=\frac{1}{p}-\frac{\alpha}{n}$ and let 
$K$ be defined by the equation $\left(\frac{1}{K} +
\frac{1}{K}\frac{q}{p'}\right)(1-\frac{\alpha}{n})
  \max\{1,\frac{p'}{q}\}
=\frac{1}{2}$. For $w\in {A}_{p,q}^{\alpha,\theta / 3K}$ there holds
\begin{align*}
\norm{L^{-\frac{\alpha}{2}}:L^p(w^p)\to L^q(w^q)}
\lesssim 
  [w]_{{A}_{p,q}^{\alpha,\theta / 3K}}^{(1-\frac{\alpha}{n})
  \max\{1,\frac{p'}{q}\}},
\end{align*}
where the implied constant depends on $p,q,\alpha,n$, and $\theta$.
\end{theorem}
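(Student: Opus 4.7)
The plan is to reduce $L^{-\alpha/2}$ to a dyadic generalized fractional integral adapted to the critical function $\rho$, decompose the cubes by the size of $\bp{N}(Q)$, and apply the Lacey--Moen--P\'erez--Torres sparse argument (Theorem~A) on each scale. The exponent $K$ is dictated by matching the heat-kernel decay against the growth produced by the classical sharp bound.

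\textbf{Pointwise reduction.} Starting from the functional calculus formula
\begin{align*}
L^{-\alpha/2} f(x) = \frac{1}{\Gamma(\alpha/2)} \int_0^\infty e^{-tL}f(x)\, t^{\alpha/2-1}\,dt,
\end{align*}
I would insert the heat-kernel bound \eqref{hk bound} with a parameter $N = N(\theta,K)$ to be chosen large. The elementary identity $\int_0^\infty t^{\alpha/2 - n/2 - 1} e^{-|x-y|^2/ct}\,dt \simeq |x-y|^{\alpha - n}$, combined with Lemma~\ref{Lem1: rho} to compare $\rho(x)$ and $\rho(y)$, yields the pointwise bound
\begin{align*}
|L^{-\alpha/2} f(x)| \lesssim \int_{\R^n} \frac{|f(y)|}{|x-y|^{n-\alpha}}\ContainC{1+\frac{|x-y|}{\rho(x)}}^{-N'}\,dy
\end{align*}
for some $N' \to \infty$ as $N \to \infty$. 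An annular decomposition in shells $\{2^{j-1}\rho(x) \le |x-y| < 2^j \rho(x)\}$, followed by the usual one-third trick, dominates this by a dyadic operator of the form
\begin{align*}
T^{\alpha}_{\mcd} f(x) := \sum_{Q \in \mcd} \bp{N'}(Q)^{-M} |Q|^{\alpha/n} \La f \Ra_Q \unit_Q(x),
\end{align*}
with $M$ growing linearly in $N$ and $\mcd$ a shifted dyadic grid.

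\textbf{Scale decomposition and sparse bound.} Partition $\mcd = \bigsqcup_{r \ge 0} \mcd_r$ with $\mcd_r := \{Q : 2^r \le \bp{N'}(Q) < 2^{r+1}\}$. The monotonicity $\bp{N'}(Q)^{-1} \le \bp{N'}(Q')^{-1}$ for $Q \subset Q'$, which is the crucial feature motivating the definition of $\widetilde{\psi}$, makes this decomposition compatible with the dyadic tree structure; on each $\mcd_r$ the factor $\bp{N'}(Q)^{-M}$ is comparable to $2^{-rM}$ and pulls out. On each $\mcd_r$, the standard sparse-domination argument of Lacey--Moen--P\'erez--Torres \cite{LacMoePerTor2010} controls the restricted operator by a classical fractional sparse operator whose $L^p(w^p) \to L^q(w^q)$ norm is at most $[w]_{\widetilde{A}_{p,q}^{\alpha, \gamma}}^{(1-\alpha/n)\max\{1, p'/q\}}$, up to a multiplicative factor $\lesssim 2^{r c(p,q,\alpha)}$ reflecting the fact that restricting to $\mcd_r$ inflates the effective $\widetilde{A}_{p,q}$-characteristic by a bounded power of $\bp{}$.

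\textbf{Summation and main obstacle.} Summing over $r$ requires $2^{-rM}$ to beat $2^{r c(p,q,\alpha)(1-\alpha/n)\max\{1,p'/q\}}$. Solving for the threshold produces precisely the equation defining $K$ in the statement, so $M$ (hence $N$) is chosen accordingly. Converting from $\widetilde{A}_{p,q}^{\alpha,\gamma}$ back to $A_{p,q}^{\alpha,\gamma/3}$ via Proposition~\ref{L:bigwtsmlwt} produces the factor $3K$ in the final hypothesis. The principal obstacle is the careful quantitative bookkeeping in the first step: one must extract heat-kernel decay in $|x-y|/\rho$ sharp enough that the series in $r$ converges after applying the classical sharp bound on each $\mcd_r$, and the precise arithmetic of this matching is exactly what fixes the exponent $K$.
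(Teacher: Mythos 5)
Your proposal follows the same overall architecture as the paper: dominate $L^{-\alpha/2}$ pointwise by a dyadic operator carrying a factor of $\widetilde{\psi}_\theta(Q)^{-1}$, partition the dyadic grid into families $\mathcal{Q}_r$ on which $\widetilde{\psi}_\theta(Q)\simeq 2^{r\theta}$, apply a Lacey--Moen--P\'erez--Torres-type sharp bound on each family, and sum the resulting geometric series, with the equation defining $K$ coming exactly from the requirement that the decay $2^{-r\theta}$ beat the inflation of the restricted $A_{p,q}$ characteristic after it passes through the exponent $(1-\alpha/n)\max\{1,p'/q\}$. The conversion to $A_{p,q}^{\alpha,\theta/3K}$ via Proposition~\ref{L:bigwtsmlwt} is also as in the paper. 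So far so good.

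The genuine gap is in the per-scale step, which you dispatch in one sentence: ``the standard sparse-domination argument of Lacey--Moen--P\'erez--Torres controls the restricted operator.'' The paper explicitly flags that this cannot be quoted. Theorem~A is proved in \cite{LacMoePerTor2010} for the \emph{continuous} operator $I_\alpha$ by a Sawyer-type reduction to weak-type bounds followed by sharp extrapolation, and extrapolation uses the $A_{p,q}$ condition over \emph{all} cubes. The operator you need to bound on each scale, $I_\alpha^{\mathcal{Q}_r}f=\sum_{Q\in\mathcal{Q}_r}|Q|^{\alpha/n}\langle f\rangle_Q\unit_Q$, is a dyadic operator built from a strict subfamily of cubes, and the only weight information available to you with the correct quantitative dependence is the restricted characteristic $[w]_{A_{p,q}}^{\mathcal{Q}_r}$; the full $[w]_{A_{p,q}^\alpha}$ need not even be finite. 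Purely dyadic sharp bounds in the literature, e.g.\ \cite{Moe2012}, are restricted to special $(p,q)$. Consequently the paper has to rebuild the entire LMPT machinery for the $\mathcal{Q}$-adapted class $A_{p,q}^{\mathcal{Q}}$: Lemma~\ref{L:rsaw} (Sawyer duality), Lemma~\ref{L:rsa} (a Rubio de Francia iteration algorithm producing $A_{r_0}^{\mathcal{Q}}$ majorants), Theorem~\ref{T:extrap} and Corollary~\ref{c:wke} (sharp extrapolation for $A_{p,q}^{\mathcal{Q}}$), and Lemma~\ref{L:bcextrap} (the weak $(1,q_0)$ base case, proved by hand via Minkowski's inequality and the geometric structure of $\mathcal{Q}$). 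Establishing Theorem~\ref{T:re} from these pieces is the substantive technical content of the section; your proposal treats it as a black-box citation and therefore skips the hardest part of the argument.

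A smaller issue: your pointwise reduction retains only $(1+|x-y|/\rho(x))^{-N'}$ and then decomposes into shells of radius $2^j\rho(x)$. The paper instead keeps the symmetric factor $(1+|x-y|(\rho(x)^{-1}+\rho(y)^{-1}))^{-\phi}$ from Tang's kernel estimate, discards the $\rho(x)^{-1}$ term, and uses fixed dyadic annuli of side $2^k$ centered at $x$; the crucial point is then that $\rho(y)\le\widetilde\rho(Q_k^{(x)})$, which is what puts $\widetilde{\psi}_\theta$ (rather than $\psi_\theta$) into the dyadic operator. Since you must pass to a $\rho$-independent dyadic lattice via the one-third trick anyway, scaling the shells by $\rho(x)$ adds a complication without payoff; what actually matters is that the operator comes out adapted to $\widetilde{\psi}_\theta$, and your proposal does not make that mechanism explicit.
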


Recalling Lemma \ref{L:bigwtsmlwt}, Theorem \ref{T:mainfio} will follow 
from the following lemma.
\begin{lemma}\label{L:mainfio}
Let $1<p<\frac{n}{\al}$ and $q$ be defined by the equation 
$\frac{1}{q}=\frac{1}{p}-\frac{\al}{n}$ and let $w$ and $\sigma$ 
be weights. There holds
\begin{align}\label{E:mainfio}
\norm{L^{-\frac{\alpha}{2}}:L^p(w^p)\to L^q(w^q)}
\lesssim [w]_{\widetilde{A}_{p,q}^{\alpha,\theta / K}}^{(1-\frac{\alpha}{n})
  \max\{1,\frac{p'}{q}\}}.
\end{align}
\end{lemma}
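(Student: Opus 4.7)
The plan is to adapt the Lacey--Moen--P\'erez--Torres strategy behind Theorem A: dominate $L^{-\alpha/2}$ pointwise by a family of dyadic fractional integrals indexed by the size of $\bp{\theta}(Q)$, apply the known sharp dyadic $A_{p,q}$ bound on each piece, and then sum a geometric series whose exponent is balanced by the defining relation for $K$. The role of the modified critical function $\widetilde{\rho}$ will be to make the pointwise domination step cube-uniform.

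First I would use the functional calculus representation $L^{-\alpha/2}f(x)=\int_0^\infty e^{-tL}f(x)\,t^{\alpha/2-1}\,dt$ together with the heat-kernel bound \eqref{hk bound} to obtain, for any $N>0$ to be chosen later,
\begin{equation*}
|L^{-\alpha/2}f(x)|\lesssim \int_{\R^n}\frac{|f(y)|}{|x-y|^{n-\alpha}}\left(1+\frac{|x-y|}{\rho(x)}+\frac{|x-y|}{\rho(y)}\right)^{-N}dy,
\end{equation*}
by splitting the $t$-integral at $t\sim|x-y|^2$ and integrating. A standard dyadic decomposition in $|x-y|$ then yields a pointwise bound
\begin{equation*}
|L^{-\alpha/2}f(x)|\lesssim \sum_{Q\in\mathcal D}\frac{1}{\bp{N}(Q)}\,\frac{|Q|^{\alpha/n}}{|Q|}\left(\int_Q |f(y)|\,dy\right)Q(x),
\end{equation*}
the crucial point being that when $x,y\in Q$ one has $|x-y|\lesssim\ell(Q)$, so Lemma \ref{Lem1: rho} combined with $\widetilde{\rho}(Q)=\sup_{z\in Q}\rho(z)$ allows the decay factor $(1+|x-y|/\rho(\cdot))^{-N}$ to be replaced by $\bp{N}(Q)^{-1}$ uniformly over $Q$.

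Second, for $r\geq 0$ set $\mathcal D_r:=\{Q\in\mathcal D:2^r\leq \bp{\theta}(Q)<2^{r+1}\}$ and let $T_r$ denote the classical dyadic fractional integral restricted to $\mathcal D_r$. Since $\bp{N}(Q)=\bp{\theta}(Q)^{N/\theta}$, we obtain $L^{-\alpha/2}\lesssim \sum_{r\geq 0}2^{-rN/\theta}T_r$. For $w\in\widetilde{A}_{p,q}^{\alpha,\theta/K}$ and $Q\in\mathcal D_r$, the identity $\bp{\theta/K}(Q)=\bp{\theta}(Q)^{1/K}\sim 2^{r/K}$ gives
\begin{equation*}
\left(\tfrac{1}{|Q|}\int_Q w^q\right)\left(\tfrac{1}{|Q|}\int_Q w^{-p'}\right)^{q/p'}\leq 2^{(r/K)(1+q/p')}[w]_{\widetilde{A}_{p,q}^{\alpha,\theta/K}},
\end{equation*}
so the sharp classical dyadic version of Theorem A applied to $T_r$ produces
\begin{equation*}
\|T_r:L^p(w^p)\to L^q(w^q)\|\lesssim 2^{(r/K)(1+q/p')(1-\alpha/n)\max\{1,p'/q\}}[w]_{\widetilde{A}_{p,q}^{\alpha,\theta/K}}^{(1-\alpha/n)\max\{1,p'/q\}}.
\end{equation*}
Summing over $r$, the defining equation for $K$ makes the net exponent of $2^r$ equal to $-N/\theta+1/2$, which is negative for any $N>\theta/2$; choosing such an $N$ produces a convergent geometric series and yields \eqref{E:mainfio}.

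I expect the main obstacle to be the dyadic domination step: one must transfer the continuous, non-local decay $(1+|x-y|/\rho)^{-N}$ of the Schr\"odinger kernel into a clean cube-indexed factor $\bp{N}(Q)^{-1}$, uniformly in the base point. This is exactly where the modified critical function $\widetilde{\rho}$ is essential, as the center-pinned $\psi_\theta(Q)$ would not permit such a uniform replacement over all of $Q$. Once the pointwise dyadic bound is in place, the stratification by $\bp{\theta}$-scale, the application of the sharp dyadic $A_{p,q}$ estimate, and the geometric summation are essentially bookkeeping, and $K$ is defined precisely so that the $2^r$-exponents cancel to $-(N/\theta-1/2)$, leaving the correct power $(1-\alpha/n)\max\{1,p'/q\}$ on the weight characteristic.
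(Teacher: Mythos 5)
Your strategy is the same as the paper's: obtain the kernel bound, dominate $L^{-\alpha/2}$ by a dyadic operator of the form $\sum_Q \widetilde{\psi}_\phi(Q)^{-1}\ell(Q)^\alpha\avg{f}_Q Q(x)$, stratify the dyadic cubes by the size of $\widetilde{\psi}_\theta$, apply a sharp dyadic $A_{p,q}$ bound on each stratum, and sum a geometric series whose convergence is precisely what the relation defining $K$ encodes. Two minor remarks first: in the domination step you need $|x-y|\simeq\ell(Q)$ (not merely $\lesssim$) on each dyadic annulus, since the inequality $1+\ell(Q)/\widetilde{\rho}(Q)\lesssim 1+|x-y|/\rho(y)$ requires a lower bound on $|x-y|$; and one should keep the kernel decay parameter equal to $\theta$ as in the paper rather than leave it as a floating $N$, though your version also works after the reparametrization $r\mapsto r\theta$.

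The genuine gap is the step where you write ``the sharp classical dyadic version of Theorem A applied to $T_r$ produces\ldots.'' You are applying the Lacey--Moen--P\'erez--Torres exponent $\bigl(1-\tfrac{\alpha}{n}\bigr)\max\{1,\tfrac{p'}{q}\}$ to a dyadic fractional integral $I_\alpha^{\mathcal{Q}_r}$ built over a \emph{restricted} subfamily $\mathcal{Q}_r\subsetneq\mathcal{D}$, with the characteristic $[w]_{A_{p,q}}^{\mathcal{Q}_r}$ taken only over cubes in $\mathcal{Q}_r$. This is not a quotable result: the LMPT theorem is for the continuous operator (its proof goes through a continuous extrapolation and a Sawyer-testing argument that do not transfer directly to an operator supported on an arbitrary subcollection of dyadic cubes), and the purely dyadic sharp bound of Moen is only valid for a restricted range of $(p,q)$. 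The paper spends all of Section \ref{s:fio} after \eqref{E:toprove} precisely on this: it proves Theorem \ref{T:re} for the restricted dyadic operator $I_\alpha^{\mathcal{Q}}$ by first reducing strong-type to two weak-type bounds via Sawyer's argument (Lemma \ref{L:rsaw}), then proving a restricted Rubio de Francia iteration (Lemma \ref{L:rsa}) and the corresponding extrapolation theorem (Theorem \ref{T:extrap}, Corollary \ref{c:wke}) adapted to the family $\mathcal{Q}$, and finally establishing a base-case weak $(1,q_0)$ inequality for $I_\alpha^{\mathcal{Q}}$ (Lemma \ref{L:bcextrap}). Your proposal treats this entire package as given, so while the reduction and the bookkeeping with $K$ are correct and match the paper, the core analytic content of the proof is missing.
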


The rest of this section is devoted to the proof of Lemma \ref{L:mainfio}.
To prove this lemma, we will first show that $L^{-\frac{\alpha}{2}}$ can be dominated 
by a certain dyadic operator. The dyadic operator will essentially be an 
infinite sum of dyadic versions of the classical fractional integral 
operator. In principle, we should be able to apply the results of 
\cite{LacMoePerTor2010} to each term to deduce the desired bound 
in Lemma \ref{L:mainfio}. However, as we will see, there are some 
subtleties that must be addressed. 

The operator is given by an integral operator with kernel 
$K(x,y)$. By \cite{Tang2011}*{Lemma 3.3} we know that the kernel satisfies the 
following bound: for every $\phi>0$ there $C_\phi$ such that 
\begin{align*}
\abs{K(x,y)}
\leq\frac{C_\phi}{(1+\abs{x-y}(\frac{1}{\rho(x)} +
  \frac{1}{\rho(y)}))^{\phi}}
\frac{1}{\abs{x-y}^{n-\alpha}}.
\end{align*}
Given this estimate it is now easy to dominate $L^{-\frac{\alpha}{2}}$ by a 
dyadic operator. First, fix $x\in\R^n$ and let $\phi >0$. Below, 
$Q_k^{(x)}$ is the cube of side--length $2^k$ centered at $x$. For 
non--negative $f$ there holds
\begin{align}\label{E:dyade1}
\abs{L^{-\frac{\alpha}{2}} f(x)}
&\lesssim \sum_{k\in\mathbb{Z}}\int_{Q_{k+1}^{(x)}\setminus Q_{k}^{(x)}}
  \frac{C_\phi}{(1+\abs{x-y}(\frac{1}{\rho(x)} +
  \frac{1}{\rho(y)}))^{\phi}}
  \frac{1}{\abs{x-y}^{n-\alpha}}
  f(y)dy
\\&\leq\sum_{k\in\mathbb{Z}}\int_{Q_{k+1}^{(x)}\setminus Q_{k}^{(x)}}
  \frac{C_\phi}{(1+\abs{x-y}(
  \frac{1}{\rho(y)}))^{\phi}}
  \frac{1}{\abs{x-y}^{n-\alpha}}
  f(y)dy.
\end{align}
Now, for $y\in Q_{k+1}^{(x)}\setminus Q_{k}^{(x)}$, $\abs{x-y}\simeq 
\ell (Q_k^{(x)})$ and $\rho(y)\leq \widetilde{\rho}(Q_k^{(x)})$ and so there holds 
\begin{align*}
\frac{C_\phi}{(1+\abs{x-y}(
  \frac{1}{\rho(y)}))^{\phi}}
  \frac{1}{\abs{x-y}^{n-\alpha}}
\leq \frac{C_\phi}{\widetilde{\psi}_\phi(Q_{k+1})}
  \frac{\abs{Q_{k+1}^{(x)}}^{\frac{\alpha}{n}}}{\abs{Q_{k+1}^{(x)}}}.
\end{align*}
\noindent Inserting this into \eqref{E:dyade1} we have 
\begin{align*}
\abs{L^{-\frac{\alpha}{2}} f(x)}
&\lesssim C_\phi\sum_{k\in\mathbb{Z}}\int_{Q_{k+1}^{(x)}\setminus Q_{k}^{(x)}}
  \frac{1}{\widetilde{\psi}_\phi(Q_{k+1}^{(x)})}
  \frac{\abs{Q_{k+1}^{(x)}}^{\frac{\alpha}{n}}}{\abs{Q_{k+1}^{(x)}}}f(y)dy
\\
&\leq C_\phi\sum_{k\in\mathbb{Z}}\frac{\abs{Q_{k+1}^{(x)}}^{\frac{\al}{n}}}
  {\widetilde{\psi}_\phi(Q_{k+1}^{(x)})}\avg{f}_{Q_{k+1}^{(x)}}.
\end{align*}
Now setting $\phi=\theta$ and
recalling that there is a collection of $M=M(n)$ dyadic 
lattices such that every cube $Q$ is contained in a cube $P$ from 
one of these lattices with $\ell (P) \lesssim \ell (Q)$, we deduce that 
$\abs{L^{-\frac{\alpha}{2}} f(x)}$ can be dominated by a finite sum of 
operators of the form
\begin{align}\label{D:dyadicfio}
I_{\alpha,\theta}^{\mathcal{D}}f(x)
:=\sum_{Q\in\mathcal{D}}\frac{(\ell (Q))^{\alpha}}{\widetilde{\psi}_\theta(Q)}
  \avg{f}_{Q}Q(x).
\end{align}
\noindent Lemma \ref{L:mainfio} will follow if for every dyadic 
lattice $\mathcal{D}$ we can show 
\begin{align}\label{E:mainfio2}
\norm{I_{\alpha,\theta}^{\mathcal{D}}:L^p(w^p)\to L^q(w^q)}
\lesssim [w]_{\widetilde{A}_{p,q}^{\alpha,\theta / K}}^{(1-\frac{\alpha}{n})
  \max\{1,\frac{p'}{q}\}}.
\end{align}

We now divide the cubes into collections in which we hold 
$\widetilde{\psi}_\theta(Q)$ constant. Thus, for 
$r\in\mathbb{N}$ set $\mathcal{Q}_r:=\{Q\in\mathcal{D}:\widetilde{\psi}_\theta(Q)
\simeq 2^{r\theta}\}$. Since $\widetilde{\psi}_\theta(Q) > 1$, the sum in 
\eqref{D:dyadicfio} can be written as:
\begin{align*}
I^\mathcal{D}_{\alpha,\theta} f(x)
&= \sum_{r\geq 0} \sum_{Q\in\mathcal{Q}_{r}}
  \frac{(\ell (Q))^{\alpha}}{\widetilde{\psi}_\theta(Q)}
  \avg{f}_{Q}Q(x)
\\&\simeq\sum_{r\geq 0} 2^{-r\theta}\sum_{Q\in\mathcal{Q}_{r}}
  (\ell (Q))^{\alpha}\avg{f}_{Q}Q(x)
% \\&=\sum_{r\geq 0} 2^{-\frac{r\theta}{2}}
%   \sum_{Q\in\mathcal{S}_{r}}
%   \frac{(\ell (Q))^{\alpha}}{\widetilde{\psi}_{\theta / 2}(Q)}
%   \avg{f}_{Q}Q(x)
\\&=:\sum_{r\geq 0} 2^{-r\theta}
  I_{\alpha}^{\mathcal{Q}_{r}}f(x).
\end{align*}

The operators $I_\alpha^{\mathcal{Q}_r}$ are very similar to the 
standard dyadic versions of the classical fractional integral operator. 
Indeed, the only difference is that in the classical case, 
$\mathcal{Q}_r=\mathcal{D}$. For the cubes $Q\in\mathcal{Q}_r$
\begin{align}\label{E:gesum}
\left(\frac{1}{\abs{Q}}
  \int_{Q}w(x)^qdx\right)
\left(\frac{1}{\abs{Q}}
  \int_{Q}w(x)^{-p'}dx\right)^{\frac{q}{p'}}
\leq [w,\sigma]_{A_{p,q}^{\alpha,\theta / K}}
  2^{r\left(\frac{\theta}{K}+\frac{\theta}{K}\frac{q}{p'}\right)}.
% = [w,\sigma]_{A_{p,q}^{\theta / K}}
%   2^{\frac{r\theta}{2}}.
\end{align}

The point of this computation is that on the cubes in $\mathcal{Q}_r$, 
the $A_{p,q}$ characteristic is finite and so we would like to apply 
the sharp theorem of \cite{LacMoePerTor2010} to each of the operators 
% $I_\alpha^{\mathcal{Q}_r}$. Thus, by the definition of $K$, each term in 
% \eqref{E:gesum} is dominated by  
% $2^{-\frac{r\theta}{2}}
%   [w,\sigma]_{\widetilde{A}_{p,q}^{\alpha,\theta / K}}^{(1-\frac{\alpha}{n})
%   \max\{1,\frac{p'}{q}\}}$. Thus, summing the geometric series 
%will give \eqref{E:mainfio2}.

The problem with this approach is that the theorem of 
Lacey--Moen--P\'erez--Torres is for the continuous version of 
the fractional integral operator. Their proof uses a sharp extrapolation 
theorem and this can not be directly applied to an operator like 
$I_\alpha^{\mathcal{Q}_r}$. On the other hand, purely dyadic versions 
of this theorem, for example \cite{Moe2012}, are only valid for 
certain values of $p$ and $q$. 

We must therefore prove a version of the theorem of Lacey--Moen--P\'erez--Torres 
for the operators $I_\alpha^{\mathcal{Q}_r}$. That is, we must 
prove the estimate
\begin{align}\label{E:toprove}
\norm{I_\alpha^{\mathcal{Q}_r}:L^p(w^p)\to L^q(w^q)}
\lesssim \left([w]_{A_{p,q}^{\alpha,\theta / K}}
  2^{r\left(\frac{\theta}{K}+
  \frac{\theta}{K}\frac{q}{p'}\right)}
  \right)^{(1-\frac{\alpha}{n})
  \max\{1,\frac{p'}{q}\}}.
\end{align}

Proving this estimate is the content of the next subsection. We will use 
a modified version of well-known extrapolation theorems. It is likely 
that the extrapolation theorem in the next subsection exists in the literature 
and we are aware of many similar theorems, but we have not been able to 
find an exact version of what we need. In any case, this will be well-known 
to experts, but we give some details; see \cite{CruMarPer2011,LacMoePerTor2010} for more 
information. 

\subsection{An Extrapolation Argument}
In this section, we will prove \eqref{E:toprove}. We will actually prove 
something slightly more general. 

Let $\mathcal{Q}$ be a finite collection of dyadic cubes. We will 
define a class of weights in the following way. We define the 
$A_{p,q}^{\mathcal{Q}}$ characteristic of a weight $w$ by
\begin{align*}
[w]_{A_{p,q}}^{\mathcal{Q}}:=
\sup_{Q\in\mathcal{Q}}
  \left(\frac{1}{\abs{Q}}\int_{Q}w^q(x)dx\right)
  \left(\frac{1}{\abs{Q}}\int_{Q}w^{-p'}(x)dx\right)^{\frac{q}{p'}}
<\infty
\end{align*}
for $1<p$ and for $p=1$
\begin{align*}
[w]_{1,q}^{\mathcal{Q}}:=
\left(\frac{1}{\abs{Q}}\int_{Q}w^q(x)dx\right)
\left(\inf_{Q} w^q(x)\right)<\infty.
\end{align*}
Define the following ``$\mathcal{Q}$--dyadic'' maximal function
\begin{align*}
M^{\mathcal{Q}}f(x)
:=\sup_{Q\in\mathcal{Q}}\frac{Q(x)}{\abs{Q}}\int_{\abs{f(x)}}dx
\end{align*}
and the ``$\mathcal{Q}$--dyadic'' fractional integral operator
\begin{align*}
I_{\alpha}^{\mathcal{Q}}f(x)
:=\sum_{Q\in\mathcal{Q}}\abs{Q}^{\alpha/n}\avg{f}_{Q}Q(x).
\end{align*}

Estimate \eqref{E:toprove} will follow from the following theorem.
\begin{theorem}\label{T:re}
Let $1<p<\frac{n}{\alpha}$ and $q$ be defined by the equation 
$\frac{1}{q}=\frac{1}{p}-\frac{\alpha}{n}$. 
For $w\in {A}_{p,q}^\mathcal{Q}$. There holds
\begin{align*}
\norm{I_{\alpha}^{\mathcal{Q}}:L^p(w^p)\to L^q(w^q)}
\lesssim 
  [w]_{{A}_{p,q}^\mathcal{Q}}^{(1-\frac{\alpha}{n})
  \max\{1,\frac{p'}{q}\}},
\end{align*}
where the implied constant depends on $p,q,\alpha$ and $n$.
\end{theorem}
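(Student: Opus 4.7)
The plan is to adapt the Lacey--Moen--P\'erez--Torres argument to the positive dyadic operator $I_\alpha^{\mathcal{Q}}$, relying on the observation that since its definition only references cubes in $\mathcal{Q}$, every average appearing in the proof stays inside $\mathcal{Q}$, so the $\mathcal{Q}$-restricted characteristic $[w]_{A_{p,q}^{\mathcal{Q}}}$ is what naturally appears. I would split the proof into a base case at a single symmetric exponent followed by off-diagonal extrapolation.

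For the base case I work at the critical pair $p_0 = 2n/(n+\alpha)$, $q_0 = 2n/(n-\alpha)$, where $p_0' = q_0$ and $\max\{1, p_0'/q_0\} = 1$, so the target exponent reduces to $1-\alpha/n$. Since $I_\alpha^{\mathcal{Q}}$ is a positive sum of cube averages indexed by $\mathcal{Q}$, a standard dyadic stopping-time argument inside $\mathcal{Q}$ extracts a sparse sub-family $\mathcal{S}\subset\mathcal{Q}$ pointwise dominating $I_\alpha^{\mathcal{Q}}$; alternatively one can verify Sawyer-type testing conditions for $I_\alpha^{\mathcal{Q}}$ directly and bound the testing norms by a direct computation on cubes of $\mathcal{Q}$. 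The sharp sparse fractional weighted bound then yields the estimate with constant $[w]_{A_{p_0,q_0}^{\mathcal{S}}}^{1-\alpha/n} \leq [w]_{A_{p_0,q_0}^{\mathcal{Q}}}^{1-\alpha/n}$.

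To move to general $(p,q)$ with $\frac{1}{p}-\frac{1}{q}=\frac{\alpha}{n}$ I would apply the quantitative off-diagonal Rubio de Francia extrapolation theorem, implemented with the $\mathcal{Q}$-restricted maximal function $M^{\mathcal{Q}}$ in place of the full dyadic maximal function. The iteration still closes because only cubes in $\mathcal{Q}$ enter the Rubio de Francia algorithm, and the resulting auxiliary weights satisfy an $A_1^{\mathcal{Q}}$ bound with the correct quantitative dependence on $[w]_{A_{p,q}^{\mathcal{Q}}}$. The sharp exponent bookkeeping then converts the base bound into $[w]_{A_{p,q}^{\mathcal{Q}}}^{(1-\alpha/n)\max\{1,p'/q\}}$. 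The main obstacle is this restricted extrapolation: one must verify that every supremum tracked through the iteration can legitimately be taken over $\mathcal{Q}$ alone, and that $M^{\mathcal{Q}}_\sigma$ is bounded on $L^{p'}(\sigma)$ with constant independent of $\sigma$ so that the Rubio de Francia series converges. Once those points are secured, Theorem \ref{T:re} follows from the base case combined with extrapolation, and then \eqref{E:toprove} is obtained by substituting the estimate \eqref{E:gesum} for the $A_{p,q}^{\mathcal{Q}_r}$ constant.
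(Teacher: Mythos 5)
Your proposal is a valid route, but it differs from the paper's in both the choice of base case and the mechanism that converts it into the full strong-type result. The paper does \emph{not} prove a strong-type estimate at the symmetric pair. Instead it proceeds as follows: first, by a $\mathcal{Q}$-restricted Sawyer-type reduction (Lemma \ref{L:rsaw}, invoking \cites{Saw1984,Saw1988}), the strong-type norm of $I_\alpha^{\mathcal{Q}}$ is controlled by the two weak-type norms $\norm{I_\alpha^{\mathcal{Q}}:L^p(w^p)\to L^{q,\infty}(w^q)}$ and $\norm{I_\alpha^{\mathcal{Q}}:L^{q'}(w^{-q'})\to L^{p',\infty}(w^{-p'})}$. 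Second, the paper proves a \emph{weak} $(1,q_0)$ base case at $p_0=1$, $q_0=n/(n-\alpha)$ (Lemma \ref{L:bcextrap}), by a short direct computation with the kernel $\sum_{Q\subset P}Q(x)Q(y)/\abs{Q}^{1-\alpha/n}$ — no sparse domination. Third, it extrapolates the weak estimate using Corollary \ref{c:wke}; since $p_0=1$ makes $q_0/p_0'=0$, extrapolation preserves the exponent $1-\alpha/n$ uniformly in $(p,q)$, and the factor $\max\{1,p'/q\}$ is then produced by combining the two dual weak-type bounds via $[w^{-1}]_{A_{q',p'}^{\mathcal{Q}}}=[w]_{A_{p,q}^{\mathcal{Q}}}^{p'/q}$.

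Your route — sparse domination of $I_\alpha^{\mathcal{Q}}$ by a sparse subfamily $\mathcal{S}\subset\mathcal{Q}$, then the sharp sparse fractional bound, then $\mathcal{Q}$-restricted extrapolation from the symmetric pair $p_0'=q_0$ — is genuinely different, and it does work. Two remarks. First, the stopping-time argument producing $\mathcal{S}\subset\mathcal{Q}$ needs a small verification: the stopping children are taken to be the maximal cubes \emph{in $\mathcal{Q}$} with $\avg{f}_Q>2\avg{f}_P$, and their disjointness plus $\int_P f \geq \sum_Q\int_Q f$ still gives $\sum\abs{Q}<\tfrac12\abs{P}$, so sparseness holds without referencing cubes outside $\mathcal{Q}$; this is exactly the point one must check. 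Second, if you are willing to use the sharp sparse fractional bound, it holds directly at \emph{all} off-diagonal pairs $(p,q)$ with the desired exponent $(1-\alpha/n)\max\{1,p'/q\}$, and since $\mathcal{S}\subset\mathcal{Q}$ gives $[w]_{A_{p,q}^{\mathcal{S}}}\leq[w]_{A_{p,q}^{\mathcal{Q}}}$, your extrapolation step is actually unnecessary — you could conclude immediately after sparse domination. The paper avoids sparse machinery and instead builds the $\mathcal{Q}$-restricted Rubio de Francia algorithm (Lemma \ref{L:rsa}); this is somewhat longer but elementary and self-contained, whereas your version front-loads the work into a sparse bound that is modern and clean once its $\mathcal{Q}$-compatible form is justified. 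In either case, the piece you correctly identify as the crux — that the Rubio de Francia iteration can be run with $M^{\mathcal{Q}}$ so that only averages over cubes of $\mathcal{Q}$ appear — is precisely what the paper's Lemma \ref{L:rsa} establishes.
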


We remark again that, in principle, this theorem is proven in 
\cite{LacMoePerTor2010}. However, in this setting, we are only considering 
cubes $Q\in\mathcal{Q}$ and it is not clear that their theorem can be 
quoted directly. However, their proof can be modified (in some portions, the 
proof can be quoted directly) to the present setting, and this is what we do. 

The remainder of this subsection is devoted to the proof of this 
theorem. We will use the same proof as in \cite{LacMoePerTor2010}, modified 
for our setting. The outline is as follows. We first show that it 
suffices to prove two weak--type bounds. We then prove an extrapolation 
theorem for our setting. Finally, we will prove a ``base estimate'' from 
which we can extrapolate. 

In \cite{Saw1984,Saw1988} Sawyer shows that for the fractional integral 
operator, strong--type estimates follow from weak--type estimates. He does 
this by showing in \cite{Saw1988} that the fractional integral operator 
is bounded between two weighted spaces if and only if ``testing'' holds 
(that is, if and only if the norm inequality is satisfied uniformly of 
indicators of cubes; see \cite{LacSawUri2009} for a dyadic version of 
this theorem). But in \cite{Saw1984} he shows that if $T$ is a 
self--adjoint integral operator, then testing holds if $T$ and it's adjoint 
satisfy a weak--type bound. Thus, we have the following. 
\begin{lemma}\label{L:rsaw}
Let $w$ be a weight, $0<\alpha<n$, and 
$1<p\leq q<\infty$.
Then the operator norm 
\begin{align}\label{E:wt}
\norm{I_\alpha^\mathcal{Q}: L^p(w^p)\to L^q(w^q)}
\end{align}
is controlled by
\begin{align*}
\norm{I_\alpha^\mathcal{Q}: L^p(w^p)\to L^{q,\infty}(w^q)}
  + \norm{I_\alpha^\mathcal{Q}: L^{q'}(w^{-q'}) \to L^{p',\infty}(w^{-p'})}.
\end{align*}
\end{lemma}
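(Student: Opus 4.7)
The plan is to follow Sawyer's two-step reduction: first, invoke his testing characterization to reduce the strong-type norm to a pair of testing conditions on cubes; then exploit that $I_\alpha^{\mathcal{Q}}$ is a positive self-adjoint integral operator to control each testing condition by the corresponding weak-type norm.

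First I would record the structural observation that $I_\alpha^{\mathcal{Q}}$ is a positive integral operator with symmetric kernel
\begin{align*}
K(x,y) = \sum_{Q \in \mathcal{Q}} \abs{Q}^{\alpha/n - 1} \unit_{Q}(x)\,\unit_{Q}(y),
\end{align*}
so $I_\alpha^{\mathcal{Q}}$ equals its own formal adjoint with respect to Lebesgue measure. Setting $\sigma = w^{-p'}$ and rewriting $\norm{I_\alpha^{\mathcal{Q}}: L^p(w^p)\to L^q(w^q)}$ as the norm of $f\mapsto I_\alpha^{\mathcal{Q}}(f\sigma)$ from $L^p(\sigma)$ to $L^q(w^q)$, I would then apply Sawyer's two-weight testing theorem (the dyadic version is in \cite{LacSawUri2009}) to get that this norm is comparable to
\begin{align*}
\mathcal{T}_1 + \mathcal{T}_2 := \sup_{Q\in\mathcal{Q}} \frac{\norm{\unit_Q\,I_\alpha^{\mathcal{Q}}(\sigma\unit_Q)}_{L^q(w^q)}}{\sigma(Q)^{1/p}} + \sup_{Q\in\mathcal{Q}} \frac{\norm{\unit_Q\,I_\alpha^{\mathcal{Q}}(w^q\unit_Q)}_{L^{p'}(\sigma)}}{w^q(Q)^{1/q'}}.
\end{align*}
Because the sum in $I_\alpha^{\mathcal{Q}}$ only involves cubes $Q\in\mathcal{Q}$, restricting the testing supremum to $\mathcal{Q}$ is automatic once one checks Sawyer's proof goes through in this purely dyadic setting.

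The second step — the core of \cite{Saw1984} — controls each $\mathcal{T}_i$ by the pair of weak-type norms. For $\mathcal{T}_1$, fix $Q\in\mathcal{Q}$, set $E_\lambda := \{x\in Q : I_\alpha^{\mathcal{Q}}(\sigma\unit_Q)(x) > \lambda\}$, and use the layer cake identity
\begin{align*}
\int_Q \abs{I_\alpha^{\mathcal{Q}}(\sigma\unit_Q)}^q w^q\,dx = q\int_0^\infty \lambda^{q-1} w^q(E_\lambda)\,d\lambda.
\end{align*}
The small-$\lambda$ regime is handled directly by $\norm{I_\alpha^{\mathcal{Q}}:L^p(w^p)\to L^{q,\infty}(w^q)}$ applied to $\sigma\unit_Q$, which gives $w^q(E_\lambda) \lesssim \lambda^{-q}\norm{\sigma\unit_Q}_{L^p(w^p)}^{q} = \lambda^{-q}\sigma(Q)^{q/p}$. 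For the large-$\lambda$ regime one invokes self-adjointness to pivot:
\begin{align*}
\int_{E_\lambda} I_\alpha^{\mathcal{Q}}(\sigma\unit_Q)\,w^q\,dx = \int_Q I_\alpha^{\mathcal{Q}}(w^q\unit_{E_\lambda})\,\sigma\,dx,
\end{align*}
and then applies the dual weak-type bound $\norm{I_\alpha^{\mathcal{Q}}:L^{q'}(w^{-q'})\to L^{p',\infty}(w^{-p'})}$ to $w^q\unit_{E_\lambda}$, noting $w^{-p'} = \sigma$ and $\norm{w^q\unit_{E_\lambda}}_{L^{q'}(w^{-q'})} = w^q(E_\lambda)^{1/q'}$. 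Choosing the splitting parameter $\lambda_0$ to balance the two contributions yields $\mathcal{T}_1 \lesssim \norm{I_\alpha^{\mathcal{Q}}:L^p(w^p)\to L^{q,\infty}(w^q)} + \norm{I_\alpha^{\mathcal{Q}}:L^{q'}(w^{-q'})\to L^{p',\infty}(w^{-p'})}$. A symmetric argument, swapping the roles of the two weights via self-adjointness, gives the same bound for $\mathcal{T}_2$.

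The main technical obstacle is the balancing act in the layer cake step: the two weak-type bounds give estimates of $w^q(E_\lambda)$ with opposite dependencies on $\lambda$, and one must optimize in the cutoff $\lambda_0$ to recover an estimate that integrates against $\lambda^{q-1}\,d\lambda$ without losing the correct power of $\sigma(Q)^{q/p}$. Once this optimization is carried out — exactly as in \cite{Saw1984} — combining with Step 1 completes the proof.
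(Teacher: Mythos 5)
Your overall plan matches what the paper intends: the paper gives no proof of Lemma~\ref{L:rsaw}, deferring entirely to \cite{Saw1988} (or \cite{LacSawUri2009}) for the testing characterization and to \cite{Saw1984} for the implication that weak-type bounds plus self-adjointness give the testing conditions. Your Step~1 (reduce to $\mathcal{T}_1+\mathcal{T}_2$, with the supremum correctly restricted to $Q\in\mathcal{Q}$) is sound, and the algebra $\norm{\sigma\unit_Q}_{L^p(w^p)}=\sigma(Q)^{1/p}$ and $\norm{w^q\unit_{E_\lambda}}_{L^{q'}(w^{-q'})}=w^q(E_\lambda)^{1/q'}$ are both correct.

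The gap is in your Step~2, the layer-cake with a balancing cutoff $\lambda_0$. Both of your two regimes produce the \emph{same} decay in $\lambda$. The direct weak-type estimate gives $w^q(E_\lambda)\lesssim N_1^q\,\lambda^{-q}\sigma(Q)^{q/p}$. The ``pivot'' via self-adjointness gives $\lambda\, w^q(E_\lambda)\le \int_Q I_\alpha^{\mathcal{Q}}(w^q\unit_{E_\lambda})\,\sigma\,dx$; to bound the right-hand side using only the dual weak-type bound you must pair the $L^{p',\infty}(\sigma)$ quasi-norm of $I_\alpha^{\mathcal{Q}}(w^q\unit_{E_\lambda})$ against $\unit_Q\in L^{p,1}(\sigma)$, which yields $\lambda\,w^q(E_\lambda)\lesssim N_2\,\sigma(Q)^{1/p}\,w^q(E_\lambda)^{1/q'}$, i.e.\ again $w^q(E_\lambda)\lesssim N_2^q\,\lambda^{-q}\sigma(Q)^{q/p}$. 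With both estimates at the borderline rate $\lambda^{-q}$, the integral $q\int \lambda^{q-1}w^q(E_\lambda)\,d\lambda$ accumulates a logarithmic divergence on every half-line, and no choice of $\lambda_0$ fixes that: there is nothing to balance.

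The correct execution of Step~2 bypasses the layer-cake entirely. Write
\begin{align*}
\left(\int_Q \bigl(I_\alpha^{\mathcal{Q}}(\sigma\unit_Q)\bigr)^q u\,dx\right)^{1/q}
=\sup_{\substack{g\ge 0,\ \supp g\subset Q\\ \norm{g}_{L^{q'}(u)}\le 1}}\int_Q I_\alpha^{\mathcal{Q}}(\sigma\unit_Q)\,g\,u\,dx
=\sup_{g}\int_Q I_\alpha^{\mathcal{Q}}(gu)\,\sigma\,dx,
\end{align*}
where $u=w^q$ and the last equality is self-adjointness (the kernel symmetry you recorded). Then apply the $L^{p,1}(\sigma)$--$L^{p',\infty}(\sigma)$ H\"older pairing: since $\norm{\unit_Q}_{L^{p,1}(\sigma)}\simeq\sigma(Q)^{1/p}$ and, by the dual weak-type bound and the identity $\norm{gu}_{L^{q'}(w^{-q'})}=\norm{g}_{L^{q'}(u)}\le 1$,
\begin{align*}
\norm{I_\alpha^{\mathcal{Q}}(gu)}_{L^{p',\infty}(\sigma)}
\le \norm{I_\alpha^{\mathcal{Q}}:L^{q'}(w^{-q'})\to L^{p',\infty}(w^{-p'})},
\end{align*}
you obtain $\mathcal{T}_1\lesssim \norm{I_\alpha^{\mathcal{Q}}:L^{q'}(w^{-q'})\to L^{p',\infty}(w^{-p'})}$ directly, and symmetrically $\mathcal{T}_2\lesssim\norm{I_\alpha^{\mathcal{Q}}:L^p(w^p)\to L^{q,\infty}(w^q)}$. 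This is the substance of Sawyer's argument; the Lorentz pairing supplies the ``gain'' that your layer-cake splitting cannot produce.
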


Given Lemma \ref{L:rsaw}, we now turn our attention to proving 
the following lemma. 
\begin{lemma}\label{L:sharpweak}
Let $1<p<\frac{n}{\alpha}$ and $q$ be defined by the equation 
$\frac{1}{q}=\frac{1}{p}-\frac{\alpha}{n}$. 
For $w\in {A}_{p,q}$. There holds
\begin{align*}
\norm{I_\alpha^\mathcal{Q}: L^p(w^p)\to L^{q,\infty}(w^q)}
+ 
\norm{I_\alpha^\mathcal{Q}: L^{q'}(w^{-q'}) \to L^{p',\infty}(w^{-p'})}
\lesssim 
  [w]_{{A}_{p,q}}^{(1-\frac{\alpha}{n})},
\end{align*}
where the implied constant depends on $p,q,\alpha$ and $n$.
\end{lemma}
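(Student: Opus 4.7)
The plan is to prove each of the two weak-type bounds directly by adapting the principal-cubes / Carleson-summation argument of \cite{LacMoePerTor2010} to the restricted dyadic collection $\mathcal{Q}$. Both bounds give the optimal exponent $1-\alpha/n$ on $[w]_{A_{p,q}^\mathcal{Q}}$, and they cannot be obtained from one another by a simple duality substitution: the kernel of $I_\alpha^\mathcal{Q}$ is symmetric, but the change of variables $w\mapsto w^{-1}$, $(p,q)\mapsto(q',p')$ only relates $[w^{-1}]_{A_{q',p'}^\mathcal{Q}}$ to $[w]_{A_{p,q}^\mathcal{Q}}^{p'/q}$, which would yield a worse constant whenever $p'/q>1$. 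So each norm must be estimated on its own.

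For the first bound $\norm{I_\alpha^\mathcal{Q}\colon L^p(w^p)\to L^{q,\infty}(w^q)}\lesi [w]_{A_{p,q}^\mathcal{Q}}^{1-\alpha/n}$, fix $\lambda>0$ and $f\geq 0$, set $\sigma=w^{-p'}$, and write $f=g\sigma$ so that $\norm{g}_{L^p(\sigma)}=\norm{f}_{L^p(w^p)}$. Construct a principal-cubes family $\mathcal{F}\subset\mathcal{Q}$ by stopping whenever the $\sigma$-average of $g$ doubles relative to its principal parent. Within each corona of cubes in $\mathcal{Q}$ whose nearest principal ancestor is a fixed $F\in\mathcal{F}$, the quantities $\avg{g\sigma}_Q/\sigma(Q)$ are comparable to $\avg{g\sigma}_F/\sigma(F)$, while the nested sum of $|Q|^{\alpha/n-1}\sigma(Q)$ telescopes (geometric growth of $|Q|^{\alpha/n}$ along chains) to $|F|^{\alpha/n-1}\sigma(F)$. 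This yields a pointwise sparse-type control
\begin{align*}
I_\alpha^\mathcal{Q}(g\sigma)(x) \lesi \sum_{F\in\mathcal{F}} \frac{\avg{g\sigma}_F}{\sigma(F)}\,|F|^{\alpha/n-1}\sigma(F)\,\unit_F(x).
\end{align*}
Selecting the sub-family $\mathcal{F}_\lambda\subset\mathcal{F}$ on which each summand is comparable to $\lambda$, and invoking the $A_{p,q}^\mathcal{Q}$ condition in the form $w^q(F)\sigma(F)^{q/p'}\leq [w]_{A_{p,q}^\mathcal{Q}}\,|F|^{q/p'+1}$, reduces the target estimate to a Carleson sum that is controlled by sparseness of $\mathcal{F}$, producing the sharp $\lambda^{-q}\norm{f}_{L^p(w^p)}^q\,[w]_{A_{p,q}^\mathcal{Q}}^{q(1-\alpha/n)}$.

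The second weak-type bound follows by the symmetric argument, swapping the roles of $w^q$ and $\sigma$: one now writes the input $g\in L^{q'}(w^{-q'})$ as $g=h\,w^q$ and builds the corona using $w^q$-averages, applying the $A_{p,q}^\mathcal{Q}$ condition in the equivalent form $\sigma(F)\,w^q(F)^{p'/q}\leq [w]_{A_{p,q}^\mathcal{Q}}^{p'/q}\,|F|^{p'/q+1}$. The main obstacle is verifying that the principal-cubes and Carleson-embedding machinery of \cite{LacMoePerTor2010}, which is developed for all dyadic cubes, retains its sharp constant when the testing cubes are restricted to $\mathcal{Q}$. Since the argument uses only (i) nestedness of the selected cubes, inherited from $\mathcal{Q}\subset\mathcal{D}$, (ii) the $A_{p,q}$ condition evaluated exclusively at cubes of $\mathcal{Q}$, and (iii) the closure property that stopping cubes selected from $\mathcal{Q}$ themselves lie in $\mathcal{Q}$, the transfer is essentially bookkeeping --- which is exactly the authors' point when they claim the LMPT argument ``can be modified to the present setting.''
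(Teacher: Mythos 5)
Your plan runs into a genuine quantitative problem, and it is actually foreshadowed by a contradiction inside your own first paragraph. You correctly observe that the duality substitution $w\mapsto w^{-1}$, $(p,q)\mapsto(q',p')$ sends $[w^{-1}]_{A_{q',p'}^\mathcal{Q}}$ to $[w]_{A_{p,q}^\mathcal{Q}}^{p'/q}$, and you conclude that for this reason ``each norm must be estimated on its own.'' But then you announce that a direct corona argument will give \emph{both} weak-type norms with exponent $1-\alpha/n$ on $[w]_{A_{p,q}^\mathcal{Q}}$. These two statements cannot both be true: if both weak-type norms were $\lesssim[w]_{A_{p,q}^\mathcal{Q}}^{1-\alpha/n}$, then substituting $w\mapsto w^{-1}$, $(p,q)\mapsto(q',p')$ in the first would already give the second. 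Moreover, in the ``symmetric'' corona argument you sketch for the adjoint, you invoke the $A_{p,q}$ condition precisely in the form $\sigma(F)\,w^q(F)^{p'/q}\leq [w]_{A_{p,q}^\mathcal{Q}}^{p'/q}|F|^{1+p'/q}$, which carries the power $p'/q$. Propagating this through the weak $(q',p')$ estimate inevitably produces $[w]_{A_{p,q}^\mathcal{Q}}^{(1-\alpha/n)\,p'/q}$, not $[w]_{A_{p,q}^\mathcal{Q}}^{1-\alpha/n}$. The sharp weak-type exponent from Theorem 2.6 of Lacey--Moen--P\'erez--Torres really is $1-\alpha/n$ on $[w^{-1}]_{A_{q',p'}^\mathcal{Q}}$, hence $(1-\alpha/n)p'/q$ on $[w]_{A_{p,q}^\mathcal{Q}}$, and this cannot be improved when $p'/q>1$.

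The upshot is that the lemma statement as printed, with exponent $(1-\alpha/n)$ on the sum of the two norms, is almost certainly a typo; the bound that the paper actually proves (read the lines after Lemma \ref{L:bcextrap}) and that Theorem \ref{T:re} requires via Lemma \ref{L:rsaw} is $[w]_{A_{p,q}^\mathcal{Q}}^{(1-\alpha/n)\max\{1,\,p'/q\}}$. You should prove that version. The paper's route is also different from yours: rather than a direct corona/sparse argument, it proves a base case weak-type $(1,q_0)$ estimate with $q_0=n/(n-\alpha)$ (Lemma \ref{L:bcextrap}), establishes a Rubio de Francia iteration algorithm restricted to $\mathcal{Q}$ (Lemma \ref{L:rsa}), deduces an extrapolation theorem (Theorem \ref{T:extrap}) and its weak-type corollary (Corollary \ref{c:wke}), and then extrapolates from $p_0=1$ (where $q_0/p_0'=0$, so the max collapses) to all $p$, obtaining the first bound with exponent $1-\alpha/n$ and the second by the self-adjointness of $I_\alpha^\mathcal{Q}$ together with $[w^{-1}]_{A_{q',p'}^\mathcal{Q}}=[w]_{A_{p,q}^\mathcal{Q}}^{p'/q}$. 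Your direct sparse-type approach is plausible and arguably more elementary for the first bound, but you should state the second bound with the correct larger exponent and then simply combine.
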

% 
% The proof of Lemma \ref{L:rsaw2} will be by a extrapolation 
% argument. That is, we have an operator defined on $\mathcal{Q}$ and we 
% only have $A_{p,q}$ data for $w$ on the collection $\mathcal{Q}$. We 
% want to prove a sharp estimate for fixed exponents, and then extrapolate to 
% all exponents. We are not able to directly quote any other source (that we 
% have found). Indeed, some proofs rely on tools that are not available to 
% us. For example, Duoandikoetxea uses the reverse H\"older property of 
% Muckenhoupt weights and it isn't clear that given data only for the 
% collection $\mathcal{Q}$ that this is true in our setting. 

We first state the extrapolation theorem. It is our version of the extrapolation 
theorem (Theorem 2.1 in \cite{LacMoePerTor2010}). 
\begin{theorem}\label{T:extrap}
Suppose that $T$ is an operator defined on $C_c^\infty$. Suppose 
that $1\leq p_0\leq q_0<\infty$ and that 
\begin{align*}
\norm{Tf}_{L^{q_0}(w^{q_0})}
\lesssim [w]_{A_{p_0,q_0}^{\mathcal{Q}}}^{\gamma}
  \norm{f}_{L^{p_0}(w^{p_0})}
\end{align*}
for all $w\in A_{p_0,q_0}^{\mathcal{Q}}$ and some $\gamma > 0$. Then 
\begin{align*}
\norm{Tf}_{L^q(w^q)}
\lesssim [w]_{A_{p,q}^{\mathcal{Q}}}^{\gamma\max\{1,
  \frac{q_0}{p_0'}\frac{p'}{q'}\}}
  \norm{f}_{L^p(w^p)}
\end{align*}
holds for all $p,q$ satisfying $\frac{1}{p}-\frac{1}{q}
=\frac{1}{p_0}-\frac{1}{q_0}$ and all $w\in A_{p,q}^{\mathcal{Q}}$. 
\end{theorem}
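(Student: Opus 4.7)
The plan is to adapt the extrapolation argument of \cite{LacMoePerTor2010}*{Theorem 2.1} to the restricted $\mathcal{Q}$-dyadic setting, using a Rubio de Francia iteration built from the dyadic maximal operator $M^{\mathcal{Q}}$ in place of the usual Hardy--Littlewood maximal function. The classical proof goes through with only small modifications, provided one checks that the quantitative weighted bounds on $M^{\mathcal{Q}}$ are unaffected by restricting the supremum in the definition to the subcollection $\mathcal{Q}$.

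First, by duality, write
\begin{align*}
\|Tf\|_{L^q(w^q)}=\sup_{g}\int |Tf|\,g\,w\,dx,
\end{align*}
where the supremum is over nonnegative $g$ with $\|g\|_{L^{q'}}=1$. The strategy is to build a weight $W$ out of $g$, $f$, and $w$ so that (i) $W\in A_{p_0,q_0}^{\mathcal{Q}}$ with $[W]_{A_{p_0,q_0}^{\mathcal{Q}}}\lesssim [w]_{A_{p,q}^{\mathcal{Q}}}^{\kappa}$ for an appropriate $\kappa$, and (ii) H\"older's inequality in the form $\int |Tf|\,g\,w\le \|Tf\|_{L^{q_0}(W^{q_0})}\cdot \|gwW^{-1}\|_{L^{q_0'}}$ allows one to absorb the dual factor into the ratio $\|f\|_{L^p(w^p)}/\|f\|_{L^{p_0}(W^{p_0})}$. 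Applying the hypothesized bound on $T$ and tracking the exponent $\kappa$ then produces the desired estimate.

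To construct $W$, I would define the operator
\begin{align*}
\mathcal{R}^{\mathcal{Q}}h:=\sum_{k=0}^{\infty}\frac{(M^{\mathcal{Q}})^kh}{(2\|M^{\mathcal{Q}}\|_{L^s(\mu)\to L^s(\mu)})^k},
\end{align*}
which for any $s>1$ and weight $\mu$ satisfies (a) $h\le\mathcal{R}^{\mathcal{Q}}h$, (b) $\|\mathcal{R}^{\mathcal{Q}}h\|_{L^s(\mu)}\le 2\|h\|_{L^s(\mu)}$, and (c) $M^{\mathcal{Q}}(\mathcal{R}^{\mathcal{Q}}h)\le 2\|M^{\mathcal{Q}}\|_{L^s(\mu)\to L^s(\mu)}\,\mathcal{R}^{\mathcal{Q}}h$, so that $\mathcal{R}^{\mathcal{Q}}h$ belongs to a natural $A_1^{\mathcal{Q}}$ class with controlled constant. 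The sharp $L^s(\mu)\to L^s(\mu)$ norm of $M^{\mathcal{Q}}$ entering here is of order $[\mu]_{A_s^{\mathcal{Q}}}^{1/(s-1)}$, obtained by a Besicovitch-type argument in the spirit of Lemma \ref{L:weakp} together with Marcinkiewicz interpolation. Choosing $\mu$ to be a suitable power of $w$ and writing $W$ as a product of powers of $\mathcal{R}^{\mathcal{Q}}$-iterates applied to $g$ and to an auxiliary function derived from $f$, each $A_1^{\mathcal{Q}}$ factor contributes a controlled power of $[w]_{A_{p,q}^{\mathcal{Q}}}$ to $[W]_{A_{p_0,q_0}^{\mathcal{Q}}}$.

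The main obstacle is bookkeeping: one must choose the exponent $s$, the measure $\mu$, and the factorization of $W$ so that the power on $[w]_{A_{p,q}^{\mathcal{Q}}}$ in the final estimate is exactly $\gamma\,\max\{1,\tfrac{q_0}{p_0'}\tfrac{p'}{q'}\}$. The appearance of the maximum reflects that the construction can be carried out in two essentially dual ways (applying $\mathcal{R}^{\mathcal{Q}}$ to $g$ or to a function derived from $f$), each yielding a candidate exponent, and the max emerges naturally when these are combined. The restriction to $\mathcal{Q}$ itself causes no substantive trouble, since every step of \cite{LacMoePerTor2010}*{\S2} reduces either to pointwise inequalities between $M^{\mathcal{Q}}$ and $\mathcal{R}^{\mathcal{Q}}$ or to weighted norm bounds for $M^{\mathcal{Q}}$, and both are insensitive to replacing the family of all dyadic cubes by the subcollection $\mathcal{Q}$.
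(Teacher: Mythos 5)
Your proposal is correct and follows essentially the same route as the paper: both rely on a Rubio de Francia iteration algorithm built from $M^{\mathcal{Q}}$, using the sharp $L^s(\mu)$ bound $\|M^{\mathcal{Q}}\|\lesssim[\mu]_{A_s^{\mathcal{Q}}}^{1/(s-1)}$ (valid because $M^{\mathcal{Q}}$ is dyadic, so no Besicovitch is actually needed), and both then defer the exponent bookkeeping to \cite{LacMoePerTor2010}*{Theorem 2.1}. The only cosmetic difference is that the paper packages the iteration in the Grafakos-style conjugated form $Rg=\bigl(M^{\mathcal{Q}}(g^{1/t}v)v^{-1}\bigr)^{t}$, yielding directly that $Gv\in A_{r_0}^{\mathcal{Q}}$ with $[Gv]_{A_{r_0}^{\mathcal{Q}}}\lesssim[v]_{A_r^{\mathcal{Q}}}$ (their Lemma \ref{L:rsa}), whereas you use the unconjugated sum $\mathcal{R}^{\mathcal{Q}}h=\sum_k (M^{\mathcal{Q}})^kh/(2\|M^{\mathcal{Q}}\|)^k$ producing an $A_1^{\mathcal{Q}}$ weight; these are interchangeable and lead to the same estimate.
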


As is familiar to experts, the key to proving Theorem \ref{T:extrap} 
is a version of the Rubio de Francia iteration algorithm. Once we have 
established this iteration algorithm, we can prove the 
extrapolation theorem. We follow the proof in \cite{Gra2004}. 
Below, $A_{p}^{\mathcal{Q}}$ is the $A_p$ class adapted to $\mathcal{Q}$:
\begin{align*}
[w]_{A_p^{\mathcal{Q}}}
:=\sup_{Q\in\mathcal{Q}}\left(\frac{1}{\abs{Q}}\int_{Q}w(x)dx\right)
  \left(\frac{1}{\abs{Q}}\int_{Q}
  w^{-\frac{p'}{p}}(x)dx\right)^{\frac{p}{p'}}.
\end{align*}

\begin{lemma}\label{L:rsa}
Suppose that $1\leq r_0 < r$, $v\in A_r^{\mathcal{Q}}$, and $g$ is a 
non--negative function in $L^{(r/r_0)'}(v)$. Then there is a 
function $G$ such that
\begin{itemize}
 \item[(a)] $g\leq G$;
 \item[(b)] $\norm{G}_{L^{(r/r_0)'}(v)}
  \lesssim \norm{g}_{L^{(r/r_0)'}(v)}$;
 \item[(c)]$Gv\in A_{r_0}^{\mathcal{Q}}$ with 
  $[Gv]_{A_{r_0}^{\mathcal{Q}}}\lesssim [v]_{A_r^{\mathcal{Q}}}$.
\end{itemize}
The implied constants are independent of $r_0,r,\mathcal{Q},v,G$ and $g$. 
\end{lemma}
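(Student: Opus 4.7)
The plan is to adapt the classical Rubio de Francia iteration algorithm to the restricted-cube setting defined by $\mathcal{Q}$. Since the maximal operator $M^{\mathcal{Q}}$, the class $A_r^{\mathcal{Q}}$, and all relevant averages only ``see'' cubes in $\mathcal{Q}$, the standard martingale/dyadic machinery transfers with only cosmetic changes; the real issue is choosing the iterating operator wisely to obtain the linear dependence $[Gv]_{A_{r_0}^{\mathcal{Q}}} \lesssim [v]_{A_{r}^{\mathcal{Q}}}$.

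For the iteration, rather than using $M^{\mathcal{Q}}$ directly (whose operator norm on $L^s(v)$ would itself depend on $[v]_{A_s^{\mathcal{Q}}}$), I would use the $v$-weighted version
$$M_v^{\mathcal{Q}} f(x) := \sup_{Q \in \mathcal{Q},\, Q \ni x} \frac{1}{v(Q)} \int_Q |f|\, v\, dy,$$
which by a Doob-type argument on the dyadic filtration restricted to $\mathcal{Q}$ is bounded on $L^s(v)$ for every $s>1$ with a norm depending only on $s$ and not on $v$. Setting $K$ to be this norm at $s=(r/r_0)'$, I would then define
$$G(x) := \sum_{k=0}^{\infty} \frac{(M_v^{\mathcal{Q}})^k g(x)}{(2K)^k}.$$
Properties (a) and (b) are immediate: (a) is the $k=0$ summand, and (b) is a geometric-series sum in $L^{(r/r_0)'}(v)$. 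By construction one also has the ``$v$-weighted $A_1^{\mathcal{Q}}$'' bound $M_v^{\mathcal{Q}} G \le 2K\,G$ pointwise.

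For (c), the $v$-weighted $A_1$ bound on $G$ translates, for every $Q \in \mathcal{Q}$ and a.e.\ $x \in Q$, to
$$\frac{1}{|Q|}\int_Q Gv\, dy \le 2K\,\frac{v(Q)}{|Q|}\, G(x),$$
so the first factor of $[Gv]_{A_{r_0}^{\mathcal{Q}}}$ is $\lesssim \avg{v}_Q \cdot \inf_Q G$. For the second factor, I would use $G \ge \inf_Q G$ on $Q$ to pull $G^{-1/(r_0-1)}$ out, and then apply H\"older's inequality with exponent $(r-1)/(r_0-1)$ to relate $v^{-1/(r_0-1)}$ back to $v^{-1/(r-1)}$. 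The two $\inf_Q G$ factors cancel, and the $A_r^{\mathcal{Q}}$ hypothesis on $v$ then yields the desired estimate $[Gv]_{A_{r_0}^{\mathcal{Q}}} \lesssim [v]_{A_r^{\mathcal{Q}}}$.

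The main technical obstacle is this last step: obtaining precisely linear dependence on $[v]_{A_r^{\mathcal{Q}}}$ requires the H\"older exponents to line up exactly so that neither a power of $[v]_{A_r^{\mathcal{Q}}}$ nor the strictly stronger quantity $[v]_{A_{r_0}^{\mathcal{Q}}}$ appears. If the direct computation does not line up cleanly, the robust fallback is to run a second Rubio de Francia iteration on the dual side in $L^{r/r_0}(\sigma)$ with $\sigma := v^{1-r'}$, producing a second $A_1^{\mathcal{Q}}$-type majorant, and then assemble $Gv$ via the Jones factorization $A_r^{\mathcal{Q}} = A_1^{\mathcal{Q}} \cdot (A_1^{\mathcal{Q}})^{1-r}$, which itself extends to the $\mathcal{Q}$-dyadic setting by a direct Coifman--Rochberg argument restricted to $\mathcal{Q}$.
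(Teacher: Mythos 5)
Your plan to adapt Rubio de Francia to the $\mathcal{Q}$-dyadic setting is the right framework, and parts (a) and (b) go through as you describe. But the choice of iterating operator creates a genuine gap in part (c). Iterating the $v$-weighted maximal function $M_v^{\mathcal{Q}}$ yields a $G$ with the $v$-weighted $A_1$-type property $M_v^{\mathcal{Q}} G \le 2K\, G$, and when you feed this into the $A_{r_0}^{\mathcal{Q}}$-characteristic of $Gv$ the two $\inf_Q G$ factors cancel exactly and you are left with
\begin{align*}
[Gv]_{A_{r_0}^{\mathcal{Q}}}
\le 2K \sup_{Q\in\mathcal{Q}}
\left(\frac{1}{\abs{Q}}\int_Q v\right)
\left(\frac{1}{\abs{Q}}\int_Q v^{-\frac{1}{r_0-1}}\right)^{r_0-1}
= 2K\,[v]_{A_{r_0}^{\mathcal{Q}}},
\end{align*}
i.e.\ the $A_{r_0}$-characteristic of $v$, not its $A_r$-characteristic. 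Since $r_0<r$, this is strictly worse and may even be infinite for $v\in A_r^{\mathcal{Q}}\setminus A_{r_0}^{\mathcal{Q}}$. The proposed H\"older repair goes the wrong direction: with exponent $a=(r-1)/(r_0-1)>1$, Jensen gives $\avg{v^{-1/(r-1)}}_Q^{\,r-1} \le \avg{v^{-1/(r_0-1)}}_Q^{\,r_0-1}$, so you cannot replace the $A_{r_0}$-type average by the $A_r$-type one. You correctly sense that the exponents ``must line up,'' but no post-hoc H\"older fix makes them line up once the iterating operator is $M_v^{\mathcal{Q}}$.

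The paper avoids this by choosing the iterating operator more carefully. Setting $t=\frac{r-r_0}{r-1}\in(0,1]$, it defines $Rg:=\bigl(M^{\mathcal{Q}}(g^{1/t}v)\,v^{-1}\bigr)^{t}$, whose norm on $L^{r/(r-r_0)}(v)$ is controlled by $[v]_{A_r^{\mathcal{Q}}}^{t}$ (via boundedness of $M^{\mathcal{Q}}$ on $L^{r'}(v^{-r'/r})$). The iteration then produces $G$ with $M^{\mathcal{Q}}(G^{1/t}v)\lesssim [v]_{A_r^{\mathcal{Q}}}\,G^{1/t}v$ --- that is, it is $G^{1/t}v$, not $G$, that lands in $A_1^{\mathcal{Q}}$ with constant $\lesssim[v]_{A_r^{\mathcal{Q}}}$. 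The parameter $t$ is chosen so that in the $A_{r_0}^{\mathcal{Q}}$ estimate the identity $\frac{t-1}{r_0-1}=-\frac{1}{r-1}$ causes the exponents to collapse to exactly $[v]_{A_r^{\mathcal{Q}}}$. This cancellation is unavailable with the Doob operator. Your fallback via Jones factorization is a plausible alternative route to the extrapolation theorem itself, but it is a different argument that does not prove the lemma as stated (it does not produce a single majorant $G$ with properties (a)--(c)), and the quantitative constants in a $\mathcal{Q}$-restricted Jones factorization would require their own careful proof.
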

\begin{proof}
Let $t=\frac{r'}{(r/r_0)'}=\frac{r-r_0}{r-1}$. Note that since 
$1\leq r_0 < r$ there holds $0<t\leq 1$. Define
\begin{align*}
Rg
:=\left(M^\mathcal{Q}(g^{\frac{1}{t}}v)v^{-1}\right)^{t}.
\end{align*}
We compute the norm of $R$ as an operator from $L^{\frac{r}{r-r_0}}(v)$
to itself. Let $f\in L^{\frac{r}{r-r_0}}(v)$. There holds
\begin{align*}
\int_{\R^n}(Rg)(x)^{\frac{r}{r-r_0}}v(x)dx
&=\int_{\R^n}\left(M^\mathcal{Q}
  (g^{\frac{1}{t}}v)v^{-1}\right)^{t\frac{r}{r-r_0}}(x)v(x)dx
\\&=\int_{\R^n}[M^\mathcal{Q}(g^{\frac{1}{t}}v)(x)]^{t\frac{r}{r-r_0}}
  v(x)^{1-t\frac{r}{r-r_0}}dx
\\&=\int_{\R^n}[M^\mathcal{Q}(g^{\frac{1}{t}}v)(x)]^{r'}
  v(x)^{-\frac{r'}{r}}dx.
\end{align*}
Now $M^\mathcal{Q}$ is bounded from $L^{r'}(v^{-\frac{r'}{r}})$ 
to itself with 
norm $[v^{-\frac{r'}{r}}]_{A_{r'}^{\mathcal{Q}}}^{1/(r'-1)}$.
Thus we can continue the estimate with
\begin{align*}
\int_{\R^n}[M^\mathcal{Q}(g^{\frac{1}{t}}v)(x)]^{r'}
  v(x)^{-\frac{r'}{r}}dx
&\leq [v^{-\frac{r'}{r}}]_{A_{r'}^{\mathcal{Q}}}^{\frac{r'}{r'-1}}
  \int_{\R^n}g(x)^{\frac{r'}{t}}v(x)^{r'}v(x)^{-\frac{r'}{r}}dx
\\&=[v^{-\frac{r'}{r}}]_{A_{r'}^{\mathcal{Q}}}^{\frac{r'}{r'-1}}
  \int_{\R^n}g(x)^{\frac{r}{r-r_0}}v(x)dx.
\end{align*}
And so we have 
\begin{align*}
\norm{R:L^\frac{r}{r-r_0}(v)\to L^\frac{r}{r-r_0}(v)}
\leq [v^{-\frac{r'}{r}}]_{A_{r'}^{\mathcal{Q}}}^{t(r-1)}
=[v]_{A_r^{\mathcal{Q}}}^{t}.
\end{align*}
Define 
\begin{align*}
G
:=\sum_{k=0}^{\infty}\frac{R^k(g)}{2^k\norm{R}^k},
\end{align*}
where $\norm{R}:=\norm{R:L^\frac{r}{r-r_0}(v)\to L^\frac{r}{r-r_0}(v)}$, 
and $R^0=\textnormal{Id}$. Then $g\leq G$ and there holds
\begin{align*}
\norm{G}_{L^{\frac{r}{r-r_0}}(v)}
\leq \sum_{k=0}^{\infty}\frac{\norm{R}^k\norm{g}_{L^{\frac{r}{r-r_0}}(v)}}
  {2^k\norm{R}^k}
\simeq \norm{g}_{L^{\frac{r}{r-r_0}}(v)}.
\end{align*}
Noting that $(r/r_0)'=r/(r-r_0)$ we see that (a) and (b) are proven.

We now need to estimate $[Gv]_{A_{r_0}^{\mathcal{Q}}}$. First, 
by applying $R$ to $G$ we have
\begin{align*}
RG
=\sum_{k=0}^{\infty}\frac{R^{k+1}(g)}{2^k\norm{R}^k}
=2\norm{R}\sum_{k=0}^{\infty}\frac{R^{k+1}(g)}{2^{k+1}\norm{R}^{k+1}}
\leq 2\norm{R}G.
\end{align*}
Thus 
\begin{align*}
\left(M^\mathcal{Q}(G^{\frac{1}{t}}v)v^{-1}\right)^{t}
\lesssim \norm{R}G
\lesssim [v]_{A_r^{\mathcal{Q}}}^{t}G.
\end{align*}
Taking $t^{\textnormal{th}}$ roots and rearranging we see that
\begin{align*}
\left(M^\mathcal{Q}(G^{\frac{1}{t}}v)\right)
\lesssim \left(G^{\frac{1}{t}}v\right)[v]_{A_r^{\mathcal{Q}}}.
\end{align*}
Thus for all cubes $Q\in\mathcal{Q}$ we have
\begin{align*}
\frac{1}{\abs{Q}}\int_{Q}G^\frac{1}{t}(x)v(x) dx
\lesssim [v]_{A_r^{\mathcal{Q}}} G^{\frac{1}{t}}v.
\end{align*}
Again rearranging this implies 
\begin{align}\label{E:ite1}
G \gtrsim
[v]_{A_r^{\mathcal{Q}}}^{-t}
w^{-t}\left(\frac{1}{\abs{Q}}\int_{Q}G(x)^{\frac{1}{t}}v(x)dx\right)^{t}.
\end{align}
We now estimate the $A_{r_0}^{\mathcal{Q}}$ characteristic of $Gv$. 
We need to estimate 
\begin{align}\label{E:ite2}
\left(\frac{1}{\abs{Q}}\int_{Q}G(x)v(x)dx\right)
\left(\frac{1}{\abs{Q}}\int_{Q}G(x)^{-\frac{1}{r_0-1}}
  v(x)^{-\frac{1}{r_0-1}}dx\right)^{r_0-1}.
\end{align}
By H\"older's Inequality with exponents 
$(1/t)$ and $(1/t)'=1/t-1$ it follows that the first factor is 
dominated by
\begin{align}\label{E:ite3}
\left(\frac{1}{\abs{Q}}\int_{Q}G(x)^{\frac{1}{t}}v(x)dx\right)^{t}
\left(\frac{1}{\abs{Q}}\int_{Q}v(x)dx\right)^{t-1}.
\end{align}
By \eqref{E:ite1} the second factor is controlled by
\begin{align}\label{E:ite4}
\left(\frac{1}{\abs{Q}}\int_{Q}
  \left\{
  [v]_{A_r^{\mathcal{Q}}}^{-1}
  \left(\frac{1}{\abs{Q}}\int_{Q}
  G(y)^{\frac{1}{t}}v(y)dy\right)
  \right\}^{-\frac{t}{r_0-1}}
  v^{\frac{t}{r_0-1}}(x)v(x)^{-\frac{1}{r_0-1}}dx\right)^{r_0-1}.
\end{align}
Multiplying \eqref{E:ite3} and \eqref{E:ite4} together, and using 
the fact that $\frac{t-1}{r_0-1}=-\frac{1}{r-1}$ we see 
that \eqref{E:ite2} is controlled by
\begin{align*}
[v]_{A_r^\mathcal{Q}}^{t}
\left\{
\left(\frac{1}{\abs{Q}}\int_{Q}v(x)dx\right)
\left(\frac{1}{\abs{Q}}\int_{Q}v(x)^{-\frac{1}{r-1}}dx
  \right)^{r-1}
\right\}^{1-t}
=[v]_{A_r^\mathcal{Q}}^{t}[v]_{A_r^\mathcal{Q}}^{1-t}
=[v]_{A_r^\mathcal{Q}}.
\end{align*}
This proves (c).
\end{proof}

\begin{remark}
We now discuss the proof Theorem \ref{T:extrap}. Given the iteration 
algorithm Lemma \ref{L:rsa}, the proof of Theorem \ref{T:extrap} is 
\textit{exactly} the same as the proof of \cite{LacMoePerTor2010}*{Theorem 2.1}.
We will not restate the proof, but we will explain why it is true. 

It is a general principle that given an iteration algorithm like in 
Lemma \ref{L:rsa}, the extrapolation theorem will follow. The main 
idea in a proof of the extrapolation theorem is to factor expressions like  
$\abs{g(x)}w(x)^q$ into pieces on which the ``base case'' bound 
can be used. 

The extrapolation argument is not very sensitive to the operator. 
For example, we do not need to assume that the operator is linear 
or even sub--linear; we only need to assume that it is defined on 
(for example) $C_c^\infty$, smooth functions with compact support. 
The fact that we only know data about $w$ for the cubes $Q$ might 
seem insufficient to deduce the claimed bounds, but we are assuming 
that the operator is bounded for the base case exponents, and this 
gives enough information to deduce the claimed bounds. \qed
\end{remark}

Using Theorem \ref{T:extrap} we have the following corollary.
The proof is in \cite{LacMoePerTor2010}*{Corollary 2.2}.
\begin{corollary}\label{c:wke}
Suppose that for some $1\leq p_0\leq q_0<\infty$, an operator 
$T$ satisfies the weak--type $(p_0,q_0)$ inequality:
\begin{align*}
\norm{T:L^{p_0}(w^{p_0})\to L^{q_0,\infty}(w^{q_0})}
\leq c[w]_{A_{p_0,q_0}^\mathcal{Q}}^\gamma
\end{align*}
for every $w\in A_{p_0,q_0}$ and some $\gamma>0$. Then $T$ also 
satisfies the weak--type $(p,q)$ inequality 
\begin{align*}
\norm{T:L^p(w^p)\to L^{q,\infty}(w^q)}
\leq c[w]_{A_{p,q}^\mathcal{Q}}^{\gamma\max\{1,\frac{q_0}{p_0'}
  \frac{p'}{q}\}}
\end{align*}
for all $1<p\leq q < \infty$ that satisfy
\begin{align*}
\frac{1}{p}-\frac{1}{q}
=\frac{1}{p_0}-\frac{1}{q_0}
\end{align*}
and all $w\in A_{p,q}$. 
\end{corollary}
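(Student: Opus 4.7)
The plan is to reduce the corollary to Theorem~\ref{T:extrap} via a standard linearization trick that converts strong-type extrapolation into weak-type extrapolation. For each $\lambda>0$, define the (nonlinear) truncation operator
\[
T_\lambda f(x) := \lambda\, \unit_{\{x\,:\,|Tf(x)|>\lambda\}}(x).
\]
The hypothesized weak-type $(p_0,q_0)$ bound on $T$ is equivalent to the uniform strong-type bound
\[
\|T_\lambda f\|_{L^{q_0}(w^{q_0})} = \lambda\, w^{q_0}\bigl(\{|Tf|>\lambda\}\bigr)^{1/q_0} \leq c\,[w]_{A_{p_0,q_0}^{\mathcal Q}}^{\gamma}\, \|f\|_{L^{p_0}(w^{p_0})},
\]
with constants independent of $\lambda$. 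Hence the family $\{T_\lambda\}_{\lambda>0}$ collectively satisfies the hypothesis of Theorem~\ref{T:extrap}; crucially, the remark following that theorem makes explicit that no linearity or sublinearity is used in the extrapolation argument, only that the operators be defined on $C_c^\infty$, which is plainly the case here.

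Applying Theorem~\ref{T:extrap} to each $T_\lambda$ now yields, for every admissible pair $(p,q)$ with $\tfrac{1}{p}-\tfrac{1}{q}=\tfrac{1}{p_0}-\tfrac{1}{q_0}$ and every $w\in A_{p,q}^{\mathcal Q}$, the bound
\[
\|T_\lambda f\|_{L^q(w^q)} \leq c\,[w]_{A_{p,q}^{\mathcal Q}}^{\gamma\max\{1,\,\frac{q_0}{p_0'}\frac{p'}{q}\}}\, \|f\|_{L^p(w^p)},
\]
again uniformly in $\lambda$. Rewriting the left-hand side as $\lambda\, w^q(\{|Tf|>\lambda\})^{1/q}$ and taking the supremum over $\lambda>0$ immediately delivers the desired weak-type $(p,q)$ inequality.

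I do not expect any real obstacle: the whole argument is the routine passage from strong-type to weak-type extrapolation and is essentially identical to the proof of \cite{LacMoePerTor2010}*{Corollary 2.2}. The only point that had to be verified in our setting — namely that the Rubio de Francia iteration (Lemma~\ref{L:rsa}) and hence Theorem~\ref{T:extrap} tolerate restricting the $A_{p,q}$ testing to the subcollection $\mathcal Q$, and that no linearity is required — has already been secured in the previous subsection. The truncations $T_\lambda$ are therefore legitimate inputs to Theorem~\ref{T:extrap}, and the reduction goes through without further complication.
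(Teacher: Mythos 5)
Your proof is correct and follows the standard linearization reduction to strong-type extrapolation, which is exactly the argument of \cite{LacMoePerTor2010}*{Corollary 2.2} that the paper cites for this corollary (the paper gives no independent proof). The only nit is that you silently write the exponent $\frac{q_0}{p_0'}\frac{p'}{q}$ when quoting Theorem~\ref{T:extrap}, whereas the paper's statement of that theorem has $\frac{q_0}{p_0'}\frac{p'}{q'}$ (an apparent typo in the paper, since your version is the one matching the corollary and the original LMPT result); this does not affect the validity of your argument.
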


We now prove a ``base case'' weak-type estimate from which we can use Corollary 
\ref{c:wke} to extrapolate to all exponents. 
\begin{lemma}\label{L:bcextrap}
Let $q_0=n/(n-\alpha)=(n/\alpha)'$.
There holds
\begin{align}\label{E:bcextrap}
\norm{I_{\alpha}^{\mathcal{Q}}f}_{L^{q_0,\infty}(w^{q_0})}
\lesssim [w]_{A_{1,q_0}^{\mathcal{Q}}}^{1-\alpha / n}
  \norm{f}_{L^{1}(w)}
\end{align}
for any weight $w$. 
\end{lemma}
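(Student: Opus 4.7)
I adapt the proof of the corresponding classical weak-type bound \cite{LacMoePerTor2010}*{Theorem~2.6} to the dyadic operator $I_\alpha^{\mathcal Q}$. Assume $f\ge 0$. The operator $I_\alpha^{\mathcal Q}$ is self-adjoint with respect to Lebesgue measure: $\langle I_\alpha^{\mathcal Q}f,g\rangle=\sum_{Q\in\mathcal Q}|Q|^{\alpha/n}\langle f\rangle_Q\langle g\rangle_Q|Q|=\langle f,I_\alpha^{\mathcal Q}g\rangle$. Combining this with the standard duality characterization
\[
\norm{I_\alpha^{\mathcal Q}f}_{L^{q_0,\infty}(w^{q_0})}
\simeq \sup_{0<w^{q_0}(E)<\infty}w^{q_0}(E)^{-1/q_0'}\int_E I_\alpha^{\mathcal Q}f\cdot w^{q_0}\,dx
\]
reduces \eqref{E:bcextrap} to proving, for every measurable $E$ with $0<w^{q_0}(E)<\infty$, the pointwise testing bound
\[
I_\alpha^{\mathcal Q}(\mathbf{1}_E w^{q_0})(x)\lesssim [w]_{A_{1,q_0}^{\mathcal Q}}^{1-\alpha/n}\, w(x)\, w^{q_0}(E)^{1/q_0'}\quad\text{for a.e. }x.
\]

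\textbf{Dyadic Hedberg plus $A_{1,q_0}^{\mathcal Q}$.} Split the defining sum of $I_\alpha^{\mathcal Q}(\mathbf 1_E w^{q_0})(x)$ at a scale $s>0$ into cubes with $|Q|\le s$ and $|Q|>s$, and sum each piece as a geometric series: the small-cube part is bounded by $s^{\alpha/n}M^{\mathcal Q}(\mathbf 1_E w^{q_0})(x)$ (using $\sum_{k\le k_0}2^{k\alpha}\lesssim 2^{k_0\alpha}$) and the large-cube part by $s^{\alpha/n-1}w^{q_0}(E)$ (using $\sum_{k>k_0}2^{k(\alpha-n)}\lesssim 2^{k_0(\alpha-n)}$). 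Optimizing in $s$ yields the dyadic Hedberg inequality
\[
I_\alpha^{\mathcal Q}(\mathbf{1}_E w^{q_0})(x) \lesssim M^{\mathcal Q}(\mathbf 1_E w^{q_0})(x)^{1-\alpha/n}\,w^{q_0}(E)^{\alpha/n}.
\]
Because $1/q_0'=\alpha/n$, the factor $w^{q_0}(E)^{\alpha/n}$ matches the target, so it remains to bound $M^{\mathcal Q}(\mathbf 1_E w^{q_0})(x)^{1-\alpha/n}$ by $[w]_{A_{1,q_0}^{\mathcal Q}}^{1-\alpha/n}w(x)$. The $A_{1,q_0}^{\mathcal Q}$ condition rewritten as a reverse Jensen inequality yields the pointwise estimate $M^{\mathcal Q}(w^{q_0})(x)\le [w]_{A_{1,q_0}^{\mathcal Q}}^{q_0}w(x)^{q_0}$, and the trivial inequality $M^{\mathcal Q}(\mathbf 1_E w^{q_0})\le M^{\mathcal Q}(w^{q_0})$ plugged into the Hedberg estimate delivers a version of the testing bound.

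\textbf{Sharpening the exponent.} The direct argument above produces the exponent $[w]_{A_{1,q_0}^{\mathcal Q}}^{q_0(1-\alpha/n)}=[w]^{1}$ rather than the claimed $[w]^{1-\alpha/n}$. The improvement is obtained, following \cite{LacMoePerTor2010}, by replacing the crude pointwise bound $M^{\mathcal Q}(\mathbf 1_E w^{q_0})\le M^{\mathcal Q}(w^{q_0})$ with the factorization
\[
M^{\mathcal Q}(\mathbf 1_E w^{q_0})(x)\le M^{\mathcal Q}_{w^{q_0}}(\mathbf 1_E)(x)\cdot M^{\mathcal Q}(w^{q_0})(x),
\]
and retaining the weighted maximal factor $M^{\mathcal Q}_{w^{q_0}}(\mathbf 1_E)$, which carries no $[w]$-dependence, inside the duality pairing. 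Combined with the universal Doob-type weak bound for $M^{\mathcal Q}_{w^{q_0}}$ on $L^{q_0'}(w^{q_0})$, this trades one power of $M^{\mathcal Q}(w^{q_0})$ (worth $[w]^{q_0}$) for $M^{\mathcal Q}_{w^{q_0}}(\mathbf 1_E)\le 1$, reducing the overall exponent by $\alpha/n$.

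\textbf{Main obstacle.} The technical heart of the proof is precisely this exponent bookkeeping: the natural combination of dyadic Hedberg with the $A_{1,q_0}^{\mathcal Q}$ pointwise bound yields only the non-sharp exponent $[w]^{1}$, and extracting the additional $[w]^{-\alpha/n}$ gain requires simultaneously keeping the $w^{q_0}$-weighted maximal of $\mathbf 1_E$ in play and invoking its universal (weight-independent) weak $(1,1)$ bound on the measure $w^{q_0}dx$. Once this refined argument is executed, the target pointwise testing inequality holds with the correct constant $[w]_{A_{1,q_0}^{\mathcal Q}}^{1-\alpha/n}$, and the duality reduction of the first paragraph closes the proof.
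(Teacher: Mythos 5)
Your overall route — dualize via the self-adjointness of $I_\alpha^{\mathcal{Q}}$, reduce to a pointwise testing bound, and prove that via a dyadic Hedberg splitting — is a genuinely different argument from the paper's, and it does work. The paper instead fixes a maximal cube $P$, views $P\cdot I_\alpha^{\mathcal{Q}}$ as an integral operator with kernel $K(x,y)=\sum_{Q\subset P}Q(x)Q(y)|Q|^{\alpha/n-1}$, uses Minkowski's inequality for $L^{q_0,\infty}(u)$, and then computes the $L^{q_0,\infty}(u)$ quasi-norm of the $y$-slice of $K$ directly by geometric summability, getting exactly $(M^{\mathcal{Q}}u(y))^{1/q_0}$. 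Both approaches ultimately land on the single input $M^{\mathcal{Q}}u\le[w]_{A_{1,q_0}^{\mathcal{Q}}}u$, so they are of comparable strength; yours is arguably closer to the well-worn Hedberg template, while the paper's kernel-slice computation is shorter once set up.

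However, there is a real error in your write-up, which then sends you on an unnecessary detour. You assert the pointwise estimate
$M^{\mathcal{Q}}(w^{q_0})(x)\le[w]_{A_{1,q_0}^{\mathcal{Q}}}^{q_0}w(x)^{q_0}$,
and this is where the phantom loss of $\alpha/n$ comes from. With the paper's definition, $[w]_{A_{1,q_0}^{\mathcal{Q}}}=\sup_{Q}\bigl(\tfrac{1}{|Q|}\int_Q w^{q_0}\bigr)\bigl(\operatorname*{ess\,inf}_{Q}w^{q_0}\bigr)^{-1}$, so the correct statement is the \emph{linear} bound $M^{\mathcal{Q}}(w^{q_0})(x)\le[w]_{A_{1,q_0}^{\mathcal{Q}}}\,w(x)^{q_0}$, with no exponent $q_0$. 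Feeding this into your Hedberg inequality and using $q_0(1-\alpha/n)=1$ immediately gives
$I_\alpha^{\mathcal{Q}}(\mathbf{1}_E w^{q_0})(x)\lesssim[w]_{A_{1,q_0}^{\mathcal{Q}}}^{1-\alpha/n}w(x)\,w^{q_0}(E)^{1/q_0'}$,
which is precisely the claimed bound — no sharpening step is required. Your entire ``Sharpening the exponent'' paragraph is therefore fixing a problem that isn't there, and moreover the fix as written does not actually close: once you have committed to the pointwise testing reduction, the factor $M^{\mathcal{Q}}_{w^{q_0}}(\mathbf{1}_E)(x)$ is simply $\le1$, and a weak-type Doob inequality (which involves integration in $x$ and a level-set estimate, not a pointwise bound) cannot be inserted into a pointwise inequality to recover a power of $[w]$. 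So: replace the erroneous $A_{1,q_0}^{\mathcal{Q}}$ pointwise estimate with the correct linear one and delete the final paragraph, and your proof is a correct and self-contained alternative to the paper's.
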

\begin{proof}
For convenience let $u=w^{q_0}$. 
Let $\mathcal{Q}_{M}$ denote the maximal cubes in $\mathcal{Q}$.
Recall that we assume that $\mathcal{Q}$ is finite so every cube 
in $\mathcal{Q}$ is contained in a unique cube in $\mathcal{Q}_M$. 
For every $Q\in\mathcal{Q}_M$ we will prove
\begin{align}\label{E:bcextrap1}
\norm{QI_{\alpha}^{\mathcal{Q}}f}_{L^{q_0,\infty}(u)}
\lesssim \norm{Qf}_{L^1(M^\mathcal{Q}u)^{1/q_0}}.
\end{align}
This will imply \eqref{E:bcextrap} by the following argument. Now, 
for every $Q\in\mathcal{Q}_M$ the $A_{1,q_0}^\mathcal{Q}$ condition implies 
$M^{\mathcal{Q}}u(x) \leq [w]_{A_{1,q_0}^{\mathcal{Q}}}u(x)$. We 
therefore have
\begin{align*}
\norm{I_{\alpha}^{\mathcal{Q}}f}_{L^{q_0,\infty}(u)}
&\leq\sum_{Q\in\mathcal{Q}_M}
  \norm{QI_{\alpha}^{\mathcal{Q}}f}_{L^{q_0,\infty}(u)}
\\&\leq\sum_{Q\in\mathcal{Q}_M}\int_{Q}\abs{f(x)}
  (M^\mathcal{Q} u)^{\frac{1}{q_0}}(x)dx
\\&\leq[w]_{A_{1,q_0}^{\mathcal{Q}}}^{\frac{1}{q_0}}
  \int_{\R^d}\abs{f(x)}
  u(x)^{\frac{1}{q_0}}(x)dx
\\&=[w]_{A_{1,q_0}^{\mathcal{Q}}}^{1-\alpha / n}
  \int_{\R^d}\abs{f(x)}w(x)dx.
\end{align*}
Now, fix a cube in $P\in\mathcal{Q}_M$. Observe that there 
holds $$P(x)I_\alpha^{\mathcal{Q}}f(x)=
\sum_{Q\in\mathcal{Q}:Q\subset P}\abs{Q}^{\alpha / n}
\avg{Pf}_{Q}Q(x). $$

Note that $I_{\alpha}^{\mathcal{Q}}$ can be written as an integral 
operator with kernel $K(x,y):=\sum_{Q\in\mathcal{Q}:Q\subset P}
\frac{Q(x)Q(y)}{\abs{Q}^{1-\alpha / n}}$. Thus using Minkowski's inequality 
for the $L^{q,\infty}$ norm, there holds
\begin{align*}
\norm{\sum_{Q\in\mathcal{Q}:Q\subset P}
  \abs{Q}^{\alpha / n}\avg{Pf}_{Q}Q(x)}_{L^{q_0,\infty}(u)}
\end{align*}
is dominated by 
\begin{align}\label{E:bcextrap2}
\int_{P}\abs{f(y)}
  \norm{\sum_{Q\in\mathcal{Q}:Q\subset P}\frac{Q(x)Q(y)}
  {\abs{Q}^{1-\alpha / n}}}_{L^{q_0,\infty}(u)}dy.
\end{align}
Now, we compute the $L^{q_0,\infty}(u)$ norm inside the integral. 
Let $\lambda > 0$ and let $\mathcal{Q}_{\lambda}$ be the 
maximal cubes in $\mathcal{Q}$ with $\abs{Q}^{1-\alpha / n} < 
\lambda^{-1}$. Now, for a fixed $x$, 
$\sum_{Q\in\mathcal{Q}:Q\subset P}Q(x)\abs{Q}^{\alpha / n-1}$ is 
a geometric sum. Thus, if 
$\sum_{Q\in\mathcal{Q}:Q\subset P}Q(x)\abs{Q}^{\alpha / n-1} > \lambda$, 
then $x$ is contained in a unique element of $\mathcal{Q}_\lambda$. 
Now, let $Q_\lambda(y)$ denote the unique element of $\mathcal{Q}_\lambda$ 
that contains $y$ (if there is such an element). Note also that 
$\lambda < \abs{Q}^{\alpha / n -1}=\abs{Q}^{-\frac{1}{q}}$. Using this 
notation and these observations there holds
\begin{align*}
\lambda\left(u\left\{x:\sum_{Q\in\mathcal{Q}:Q\subset P}\frac{Q(x)Q(y)}
  {\abs{Q}^{1-\alpha / n}} > \lambda\right\}\right)^{\frac{1}{q}}
&= \lambda u(Q_{\lambda}(y))^{\frac{1}{q}}
\\&\leq \frac{1}{\abs{Q_\lambda(y)}^{1-\alpha / n}}
  u(Q_{\lambda}(y))^{\frac{1}{q}}
\\&=\left(\frac{1}{\abs{Q_\lambda(y)}}
  u(Q_{\lambda}(y))\right)^{\frac{1}{q}}.
\end{align*}
Taking a supremum over $\lambda > 0$ we deduce that 
\begin{align*}
\norm{\sum_{Q\in\mathcal{Q}:Q\subset P}\frac{Q(x)Q(y)}
  {\abs{Q}^{1-\alpha / n}}}_{L^{q_0,\infty}(u)}
\leq (M^\mathcal{Q}u(y))^{\frac{1}{q}}. 
\end{align*}
Inserting this into \eqref{E:bcextrap2} will give 
\eqref{E:bcextrap1}.
\end{proof}

We are now in a position to prove Lemma \ref{L:sharpweak}.
Using extrapolation, we know that
\begin{align*}
\norm{I_\alpha^\mathcal{Q}: L^p(w^p)\to L^{q,\infty}(w^q)}
\lesssim [w]_{A_{p,q}^{\mathcal{Q}}}^{1-\frac{\al}{n}}
\end{align*}
and
\begin{align*}
\norm{I_\alpha^\mathcal{Q}: L^{q'}(w^{-q'}) \to L^{p',\infty}(w^{-p'})}
\lesssim [w^{-1}]_{A_{q',p'}^\mathcal{Q}}^{1-\frac{\al}{n}}.
\end{align*}
Now, $[w^{-1}]_{A_{p',q'}^\mathcal{Q}}=[w]_{A_{p,q}^\mathcal{Q}}^{\frac{p'}{q}}$ and 
$[w]_{A_{p,q}^\mathcal{Q}}>1$ so there holds
\begin{align*}
\norm{I_\alpha^\mathcal{Q}: L^p(w^p)\to L^{q,\infty}(w^q)}
%+\lesssim [w]_{A_{p,q}^{\mathcal{Q}}}^{1-\frac{\al}{n}}
\lesssim [w]_{A_{p,q}^{\mathcal{Q}}}^{1-\frac{\al}{n}}
  +[w^{-1}]_{A_{q',p'}^\mathcal{Q}}^{1-\frac{\al}{n}}
\lesssim [w]_{A_{p,q}^{\mathcal{Q}}}^{(1-\frac{\al}{n})
  \max\{1,\frac{p'}{q}\}}.
\end{align*}
Thus the proof of Lemma of \ref{L:sharpweak} is complete and 
so we have proved Theorem \ref{T:mainfio}.

\section{Weights Associated to $L$ and  Connections to BMO Space Associated to $L$}
\label{s:wtdthry}

\setcounter{equation}{0}

In this section, we recall the definition and properties of the BMO space ${\rm BMO}_\infty(\mathbb R^n)$ associated to 
$L$. Then we build the exp-log connection of  $A_p^\infty$ and ${\rm BMO}_\infty(\mathbb R^n)$.

%we discuss three classes of weights associated to $L$. These  classes are generalizations of the classical Muckenhoupt classes and they share many of the same properties. In the classical case $V\equiv 0$, each of these classes of weights becomes the standard Muckenhoupt class. 

%\subsection{BMO Associated to $L$}\label{ss:bmo}
For any $\theta \geq 0$ we can define the following $\BMO_{\theta}(\mathbb R^n)$ space as the 
set of functions such that
\begin{align}\label{D:bmotheta}
\norm{f}_{\BMO_{\theta}(\mathbb R^n)}
:= \sup_{Q\textnormal{ a cube}}
  \frac{1}{\psi_{\theta}(Q)\abs{Q}}
  \int_{Q}\abs{f(y) - \avg{f}_{Q}} dy
< \infty. 
\end{align}
We also have the following  $\BMO_{\infty}(\mathbb R^n)$ space 
\begin{align}\label{D:bmoinfty}
\BMO_{\infty}(\mathbb R^n):=\cup_{\theta\geq0} \BMO_{\theta}(\mathbb R^n). 
\end{align}
% We have the following John--Nirenberg inequality \cite{Tang2011}*{Proposition 4.2}.
% \begin{lemma}\label{L:jn}
% For all cubes $Q$ and functions $f\in \BMO_{\theta}(\mathbb R^n)$ there is a $\theta' \geq 
% \theta$ and constants such that there holds:  for every $\lambda>0$,
% \begin{align*}
% \abs{\{x\in Q: \abs{f(x) - \avg{f}_{Q}} > \lambda\}}
% \leq C \abs{Q}e^{-\frac{c\lambda}{\norm{f}_{\BMO_{\theta}(\mathbb R^n)}\psi_{\theta'}(Q)}},
% \end{align*}
% where the positive constants $c$ and $C$ in the right-hand side of the inequality above are independent of $Q$, $f$ and $\lambda$.
% \end{lemma}
% Thus we have the following corollary, \cite{Tang2011}*{Proposition 4.3}:
% \begin{corollary}\label{C:expint}
% For all $f\in\BMO_{\theta}(\mathbb R^n)$, $\gamma < 1/(2^ne)$ and cubes $Q$ we have:
% \begin{align*}
% \frac{1}{\abs{Q}}\int_{Q}e^{\gamma\frac{\abs{f(x) - \avg{f}_{Q}}}
%   {\norm{f}_{\BMO_\theta(\mathbb R^n)}\psi_{\theta'}(Q)}}dx
% \lesssim 1.
% \end{align*}
% \end{corollary}

Based on the definition of $\BMO_{\infty}(\mathbb R^n)$, we  provide the proof of Theorem \ref{T:bmoas}.

\smallskip
%\begin{theorem}
%Let $w$ be a weight. Then 
%
%(i) if $w\in A_{p}^{\infty}$, we have $\log w\in \BMO_{\infty}$;
%
%(ii) if $\log w\in \BMO_{\infty}$, then there exists some constant 
%$\eta>0$ such that $w^\eta \in A_{p}^{\infty}$.
%\end{theorem}

\begin{proof}[\bf Proof of Theorem \ref{T:bmoas}]

Proof of (i):

Suppose that $w\in A_{p}^{\infty}$. Then there exists a $\theta\geq0$ such that
$w\in A_{p}^{\theta}$. Let $\varphi=\log w$ and 
$\mu = \log\left(\left(\frac{1}{w}\right)^{\frac{1}{p-1}}\right)
=\frac{-\varphi}{p-1}$. Then for any cube $Q$ we have 
$e^{\avg{\varphi}_{Q}}e^{(p-1)\avg{\mu}_{Q}}=1$ and so we can write the 
$A_{p}^{\theta}$ condition for $w$ as follows:
\begin{align}\label{E:apexp}
\frac{1}{\psi_{\theta}(Q)^{p}}
\left(\frac{1}{\abs{Q}}\int_{Q}e^{\varphi(x) - \avg{\varphi}_{Q}}dx\right)
\left(\frac{1}{\abs{Q}}\int_{Q}e^{\mu(x)-\avg{\mu}_{Q}}dx\right)^{p-1}\leq [w]_{A_p^\theta} < \infty.
\end{align}
By Jensen's inequality we have
\begin{align*}
\frac{1}{\abs{Q}}\int_{Q}e^{\varphi(x) - \avg{\varphi}_{Q}}dx  \geq 1
\hspace{.2in}
\textnormal{and}
\hspace{.2in}
\frac{1}{\abs{Q}}\int_{Q}e^{\mu(x) - \avg{\mu}_{Q}}dx \geq 1.
\end{align*}
Thus, noting that $\psi_{\theta}(Q)^p = \psi_{p\theta}(Q)$, we 
conclude that for any $w\in A_{p}^{\theta}$ we have
\begin{align}\label{E:bothbdd}
&\frac{1}{\psi_{p\theta}(Q)}
\left(\frac{1}{\abs{Q}}\int_{Q}e^{\varphi(x) - \avg{\varphi}_{Q}}dx\right)\leq {[w]_{A_p^\theta}  \over \left(\frac{1}{\abs{Q}}\int_{Q}e^{\mu(x)-\avg{\mu}_{Q}}dx\right)^{p-1}}
\leq [w]_{A_p^\theta},
\end{align}
and similarly,
\begin{align}\label{E:bothbdd2}
&\frac{1}{\psi_{p\theta}(Q)}
\left(\frac{1}{\abs{Q}}\int_{Q}
  e^{-(\varphi(x)-\avg{\varphi}_{Q})/(p-1)}dx\right)^{p-1}
\leq [w]_{A_p^\theta}.
\end{align}
Now for a cube $Q$, let $Q_+:=\{x\in Q: \varphi - \avg{\varphi}_{Q} \geq 0\}$ and 
$Q_{-}=Q\setminus Q_{+}$. Then we have
\begin{align}\label{E: BMO norm}
&\frac{1}{\psi_{p\theta}(Q)\abs{Q}}
\int_{Q}\abs{\varphi(x) - \avg{\varphi}_{Q}}dx\\
&=\frac{1}{\psi_{p\theta}(Q)\abs{Q}}
\left(\int_{Q_+} \big(\varphi(x) - \avg{\varphi}_{Q}\big)dx+\int_{Q_-} -\big(\varphi(x) - \avg{\varphi}_{Q}\big)dx\right)\nonumber.
\end{align}
For the first term in the right-hand side of the equality above, using the trivial estimate $t\leq e^t$, we obtain that 
\begin{align}\label{E:term1}
\frac{1}{\psi_{p\theta}(Q)\abs{Q}}
\int_{Q_+} \big(\varphi(x) - \avg{\varphi}_{Q}\big)dx&\leq 
\frac{1}{\psi_{p\theta}(Q)\abs{Q}}
  \int_{Q_{+}}e^{\varphi(x) - \avg{\varphi}_{Q}}dx\\
  &\leq 
\frac{1}{\psi_{p\theta}(Q)\abs{Q}}
  \int_{Q}e^{\varphi(x) - \avg{\varphi}_{Q}}dx \nonumber\\
  &\leq  [w]_{A_p^\theta},\nonumber
\end{align}
where the last inequality follows from \eqref{E:bothbdd}.

Now for the second term, we first consider the case $p-1\leq1$. Then using the trivial estimate $t\leq e^t$ again we get
\begin{align}
&\frac{1}{\psi_{p\theta}(Q)\abs{Q}}\int_{Q_-} -\big(\varphi(x) - \avg{\varphi}_{Q}\big)dx\\
&\leq 
\frac{1}{\psi_{p\theta}(Q)\abs{Q}}
  \int_{Q_{-}} e^{-(\varphi(x) - \avg{\varphi}_{Q})} dx 
\\
&=
\frac{1}{\psi_{p\theta}(Q)\abs{Q}}
  \int_{Q_{-}}\Big[e^{-(\varphi(x) - \avg{\varphi}_{Q})/(p-1)}\Big]^{p-1}dx 
\\
&\leq 
\frac{1}{\psi_{p\theta}(Q)\abs{Q}}
  \int_{Q}\Big[e^{-(\varphi(x) - \avg{\varphi}_{Q})/(p-1)}\Big]^{p-1}dx 
\\
&\leq 
\frac{1}{\psi_{p\theta}(Q)}
  \left({1\over \abs{Q}}\int_{Q}e^{-(\varphi(x) - \avg{\varphi}_{Q})/(p-1)}dx \right)^{p-1}\\
  &\leq  [w]_{A_p^\theta},
\end{align}
where 
the third inequality follows from H\"older's inequality and the last inequality follows from \eqref{E:bothbdd2}.

We now consider the case $p-1>1$.  Again we have
\begin{align}\label{eeee1}
\frac{1}{\psi_{p\theta}(Q)\abs{Q}}
\int_{Q_-} -\big(\varphi(x) - \avg{\varphi}_{Q}\big)dx
&= 
\frac{p-1}{\psi_{p\theta}(Q)\abs{Q}}
\int_{Q_-} -\frac{\big(\varphi(x) - \avg{\varphi}_{Q}\big)}{p-1}dx
\\&\leq\frac{p-1}{\psi_{p\theta}(Q)\abs{Q}}
  \int_{Q_{-}}e^{-(\varphi(x) - \avg{\varphi}_{Q})/p-1}dx.
\end{align}
% Note that on $Q_-$, there holds $-(\varphi(x) - \avg{\varphi}_{Q})>0$ 
% and so $-(\varphi(x) - \avg{\varphi}_{Q}) \leq 
% \frac{-(\varphi(x) - \avg{\varphi}_{Q})}{p-1}$. 

% Thus there holds:
% \begin{align}\label{eeee1}
% &\frac{1}{\psi_{p\theta}(Q)\abs{Q}}
%  \int_{Q_{-}}e^{-(\varphi(x) - \avg{\varphi}_{Q})}dx 
% \\&\leq \frac{1}{\psi_{p\theta}(Q)\abs{Q}}
% \int_{Q_{-}}e^{-(\varphi(x) - \avg{\varphi}_{Q})/p-1}dx\nonumber
% \\
% &\leq\frac{1}{\psi_{p\theta}(Q)\abs{Q}}
%   \int_{Q}e^{-(\varphi(x) - \avg{\varphi}_{Q})/p-1}dx.\nonumber
% \end{align}
Next, we note that $\psi_\theta(Q)\geq1$ for all $Q$ and $\theta>0$, and that $p-1>1$. Thus we have
$$  \psi_{p\theta}(Q)^{1\over p-1} \leq  \psi_{p\theta}(Q), $$ 
which implies that
$$ {1\over \psi_{p\theta}(Q)}\leq {1\over \psi_{p\theta}(Q)^{1\over p-1} }. $$ 
Combing the above estimate and the inequality \eqref{eeee1}, we get
\begin{align*}
\frac{1}{\psi_{p\theta}(Q)\abs{Q}}
 \int_{Q_{-}}-\big(\varphi(x) - \avg{\varphi}_{Q}\big)dx
&\leq\frac{p-1}{\psi_{p\theta}(Q)^{1\over p-1}\abs{Q}}
  \int_{Q}e^{-(\varphi(x) - \avg{\varphi}_{Q})/p-1}dx\\
&\leq (p-1) [w]_{A_p^\theta}^{1\over p-1},
\end{align*}
where the last inequality follows from \eqref{E:bothbdd2}.

Now combining the estimates of the first and second terms on the right-hand side of \eqref{E: BMO norm}, 
we obtain that % when $1<p\leq2$,
\begin{align*}
&\frac{1}{\psi_{p\theta}(Q)\abs{Q}}
\int_{Q}\abs{\varphi(x) - \avg{\varphi}_{Q}}dx
\leq [w]_{A_p^\theta} \max\big\{  [w]_{A_p^\theta}  ,  (p-1)[w]_{A_p^\theta}^{1\over p-1} \big\}.
\end{align*}
Hence we obtain that
 $\log w\in \BMO_{p\theta} \subset \BMO_{\infty}$, which implies that
 (i) holds.

Proof of (ii). Consider $L=-\Delta+1$ on $\mathbb R^n$. Then from \cite{BGS1} it is known that $b(x) = |x_j|$, $1\leq j\leq n$ is in $\BMO_{\infty}$. However, $e^{\delta |x_j|}$ is not in $A_{p}^{\infty}$ for any $\delta>0$ and $p\in [1,\infty)$.
 %
%
%% suppose $\log w \in \BMO_{\infty}$. Then there exists a $\theta\geq0$ 
%%such that $\log w \in \BMO_{\theta}$. By Corollary \ref{C:expint} we conclude that 
%%then $w^{\eta}\in A_{p}^{\rho,\theta}$ for some $\eta$. Actually, we 
%%conclude that $w^{\eta}$ is in $A_{p}^{0}$. We remark that 
%%we are assuming that $\log w\in \BMO_\infty$ -- which is a weaker 
%%assumption than $\log w \in \BMO$ -- and then concluding that 
%%$w^\eta$ is in $A_p$ as in the classical theorem and this might seem questionable; 
%%however, the $\psi_{\theta'}(Q)$ is felt in the $\eta$. 
%
% Now, let $\eta$ be the constant in the exponent in Corollary \ref{C:expint}. 
% Then by:
% \begin{align}
% \frac{1}{\psi_{p\theta}(Q)}
% \left(\frac{1}{\abs{Q}}\int_{Q}
%   e^{\left(\varphi(x) - \avg{\varphi}_{Q}\right) / \eta}dx\right)
% 
% \end{align}
%
\end{proof}

\section{Conclusion}\label{s:con}
We briefly mention some two weight inequalities for the fractional 
integral operator $L^{-\frac{\alpha}{2}}$. Recall that $L^{-\frac{\alpha}{2}}$ is dominated by a 
finite sum of operators of the form
\begin{align*}
I_{\alpha,\theta}^{\mathcal{D}}
:=\sum_{Q\in\mathcal{D}}\frac{(\ell (Q))^{\alpha}}{\widetilde{\psi}_\theta(Q)}
  \avg{f}_{Q}Q(x).
\end{align*}
And by setting $\mathcal{Q}_r:=\{Q\in\mathcal{D}:\bp{\theta}(Q)\simeq 2^{r\theta}\}$
we can further decompose $I_{\alpha,\theta}^{\mathcal{D}}$ as
\begin{align*}
I^\mathcal{D}_{\alpha,\theta} f(x)
&= \sum_{r\geq 0} \sum_{Q\in\mathcal{Q}_{r}}
  \frac{(\ell (Q))^{\alpha}}{\widetilde{\psi}_\theta(Q)}
  \avg{f}_{Q}Q(x)
\\&\simeq\sum_{r\geq 0} 2^{-r\theta}\sum_{Q\in\mathcal{Q}_{r}}
  (\ell (Q))^{\alpha}\avg{f}_{Q}Q(x)
% \\&=\sum_{r\geq 0} 2^{-\frac{r\theta}{2}}
%   \sum_{Q\in\mathcal{S}_{r}}
%   \frac{(\ell (Q))^{\alpha}}{\widetilde{\psi}_{\theta / 2}(Q)}
%   \avg{f}_{Q}Q(x)
\\&=:\sum_{r\geq 0} 2^{-r\theta}
  I_{\alpha}^{\mathcal{Q}_{r}}f(x).
\end{align*}
Therefore, to establish a two weight bound, it will be enough to give a 
two weight bound for the operators $I_{\alpha}^{\mathcal{Q}_{r}}$. We also 
note that if $v$ is a weight and $\sigma:=v^{-\frac{p'}{p}}$ then there 
holds
\begin{align*}
\norm{T:L^p(v)\to L^q(w)}
=\norm{T(\sigma\cdot):L^p(\sigma)\to L^q(w)}.
\end{align*}

The following was proven by one of us and Scott Spencer \cite{RahSpe2015}.
Below, for a weight $w$ we define 
\begin{align*}
\rho_w(Q):=\frac{1}{w(Q)}\int_{Q}(M(wQ))(x)dx.
\end{align*}

\begin{lemma}\label{L:rs}
Let $1 < p \leq q < \infty$ and $\sigma,w$ be two weights. Let 
$\epsilon_{p}$ be a monotonic function on $(1,\infty)$ that satisfies 
$\int_{1}^{\infty}\frac{dt}{t\epsilon_{p}^{p}}(t)=1$ and similarly for 
$\epsilon_{q'}$. Define 
\begin{align*}
\beta(Q):=
\frac{\sigma(Q)^{\frac{1}{p'}}w(Q)^{\frac{1}{q}}}
  {\abs{Q}^{1-\frac{\alpha}{n}}}
  \rho_\sigma(Q)^{\frac{1}{p}}
  \epsilon_{p}(\rho_\sigma(Q))
  \rho_w(Q)^{\frac{1}{q'}}
  \epsilon_{q'}(\rho_w(Q))
\end{align*}
and set $[\sigma,w]_{p,q,\alpha,r}:=\sup_{Q\in\mathcal{Q}_r}\beta(Q)$. Then 
$\norm{I_{\alpha}^{\mathcal{Q}_{r}}(\sigma\cdot):L^p(\sigma)\to L^q(w)}
\lesssim [\sigma,w]_{p,q,\alpha,r}$.
\end{lemma}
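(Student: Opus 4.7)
The plan is to dualize and bound the bilinear form
\[
B(f,g) := \sum_{Q \in \mathcal{Q}_r} \frac{\sigma(Q) w(Q)}{|Q|^{1-\alpha/n}} \langle f \rangle^\sigma_Q \langle g \rangle^w_Q
\]
against $\|f\|_{L^p(\sigma)} \|g\|_{L^{q'}(w)}$ for non-negative $f,g$, where $\langle f\rangle^\sigma_Q := \sigma(Q)^{-1} \int_Q f\,d\sigma$; this suffices since $B(f,g)$ equals the pairing $\langle I_{\alpha}^{\mathcal{Q}_r}(\sigma f), g w\rangle$. The overall strategy is to use the pointwise estimate coming from the $\beta(Q)$ characteristic to peel off the geometric factor, split the remaining sum by discrete Hölder in the exponents $(p,q')$, and then absorb the entropy weights via a P\'erez--Lerner style Carleson embedding indexed by the level sets of $\rho_\sigma$ and $\rho_w$.

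Concretely, I would first rewrite
\[
\frac{\sigma(Q) w(Q)}{|Q|^{1-\alpha/n}} = \sigma(Q)^{1/p} w(Q)^{1/q'} \cdot \frac{\beta(Q)}{\rho_\sigma(Q)^{1/p} \epsilon_p(\rho_\sigma(Q))\, \rho_w(Q)^{1/q'} \epsilon_{q'}(\rho_w(Q))},
\]
bound $\beta(Q) \le [\sigma,w]_{p,q,\alpha,r}$ by hypothesis, and apply Hölder on the discrete sum with exponents $(p, q')$ to factor $B(f,g) \le [\sigma,w]_{p,q,\alpha,r} \cdot S_\sigma^{1/p} \cdot S_w^{1/q'}$, where
\[
S_\sigma := \sum_{Q \in \mathcal{Q}_r} \frac{\sigma(Q) \, (\langle f\rangle^\sigma_Q)^p}{\rho_\sigma(Q)\, \epsilon_p^{\,p}(\rho_\sigma(Q))}, \qquad S_w := \sum_{Q \in \mathcal{Q}_r} \frac{w(Q) \, (\langle g\rangle^w_Q)^{q'}}{\rho_w(Q)\, \epsilon_{q'}^{\,q'}(\rho_w(Q))}.
\]
It then remains to show $S_\sigma \lesssim \|f\|_{L^p(\sigma)}^p$, and, by symmetry, $S_w \lesssim \|g\|_{L^{q'}(w)}^{q'}$.

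To treat $S_\sigma$, I would partition $\mathcal{Q}_r$ into dyadic level sets $\mathcal{L}_k := \{Q : 2^k \le \rho_\sigma(Q) < 2^{k+1}\}$. The key input is the $\sigma$-Carleson-type sparsity property: for every cube $R$ one has $\sum_{Q \in \mathcal{L}_k,\, Q \subseteq R} \sigma(Q) \lesssim 2^k \,\sigma(R)$. The weighted Carleson embedding theorem then delivers $\sum_{Q \in \mathcal{L}_k} \sigma(Q) (\langle f\rangle^\sigma_Q)^p \lesssim 2^k \|f\|_{L^p(\sigma)}^p$, and summing on $k$ yields $S_\sigma \lesssim \sum_k \epsilon_p^{-p}(2^k) \|f\|_{L^p(\sigma)}^p$. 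Since $\epsilon_p$ is monotonic, the dyadic sum in $k$ is comparable to $\int_1^\infty \frac{dt}{t\,\epsilon_p^{\,p}(t)} = 1$ by hypothesis, giving $S_\sigma \lesssim \|f\|_{L^p(\sigma)}^p$.

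The main obstacle is establishing the sparsity estimate for the layers $\mathcal{L}_k$. The quantity $\rho_\sigma(Q)$ is precisely the Fujii--Wilson $\sigma$-$A_\infty$ characteristic localized to $Q$, and the fact that the super-level set $\{Q : \rho_\sigma(Q) > t\}$ is $\sigma$-sparse with constant $\sim t$ is the technical heart of the entropy-bump theory developed by Treil--Volberg and Hyt\"onen--P\'erez. Once this $\sigma$-sparsity is in hand, the integral condition $\int_1^\infty dt/(t\,\epsilon_p^{\,p}(t))=1$ is tailor-made to collapse the geometric series in $k$ into a finite constant, and no further stopping-time construction is needed because the two weights $\sigma$ and $w$ have already been decoupled by the Hölder step.
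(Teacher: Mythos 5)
The paper itself does not prove Lemma~\ref{L:rs}; it cites it to Rahm--Spencer \cite{RahSpe2015}, so there is no in-text proof to compare against. Your dualization to the bilinear form, the algebraic rewriting of $\sigma(Q)w(Q)/\abs{Q}^{1-\alpha/n}$ in terms of $\beta(Q)$, and the discrete H\"older split producing $S_\sigma$ and $S_w$ are all correct and match the opening moves of the entropy-bump arguments. The gap is in how you treat $S_\sigma$.

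The claimed Carleson property --- that for every cube $R$,
\begin{align*}
\sum_{Q\in\mathcal{L}_k,\ Q\subseteq R}\sigma(Q)\lesssim 2^k\,\sigma(R),\qquad \mathcal{L}_k:=\{Q:\rho_\sigma(Q)\sim 2^k\},
\end{align*}
--- is false. Take $\sigma$ to be Lebesgue measure: then $M(\sigma\unit_Q)\equiv 1$ on $Q$, so $\rho_\sigma(Q)=1$ for every cube and every dyadic subcube of $R$ lies in $\mathcal{L}_0$; yet $\sum_{Q\subseteq R}\abs{Q}$ diverges, so no bound $\lesssim 2^0\sigma(R)$ can hold with a constant independent of the (finite) collection. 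More generally $\rho_\sigma(Q)\ge 1$ always, and the bottom level set $\{\rho_\sigma\sim 1\}$ typically contains all sufficiently small cubes, so it is not a Carleson family. You have the implication backwards: what the Treil--Volberg/Rahm--Spencer machinery actually uses is that \emph{if} $\mathcal{S}$ is already an $\eta$-sparse family, then
\begin{align*}
\sum_{Q\in\mathcal{S},\,Q\subseteq R}\sigma(Q)\le \eta^{-1}\sum_{Q}\int_{E_Q}\frac{\sigma(Q)}{\abs{Q}}\,dx\le \eta^{-1}\int_R M(\sigma\unit_R)\,dx=\eta^{-1}\rho_\sigma(R)\,\sigma(R),
\end{align*}
where the $E_Q$ are the disjoint sparse pieces. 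The $\rho_\sigma$ factor is the \emph{output} of applying this to a sparse family; it does not make a level set of $\rho_\sigma$ sparse. What is missing from your argument is the stopping-time construction: one must first build $\sigma$- (resp.\ $w$-) sparse stopping families adapted to the averages $\langle f\rangle^\sigma_Q$ (resp.\ $\langle g\rangle^w_Q$), reorganize $S_\sigma$ into coronas where the averages are comparable to the stopping value, and only then invoke the lemma above and sum over the $\rho_\sigma$-levels using $\int_1^\infty dt/(t\,\epsilon_p^p(t))=1$. Without that sparse reduction the H\"older step leaves you with a sum over \emph{all} of $\mathcal{Q}_r$, and your proposed Carleson embedding does not hold. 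This is a genuine gap.
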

Now, define 
\begin{align*}
[\sigma,w]_{p,q,\alpha}^{(\theta)}
:=\sup_{Q\textnormal{ a cube}}
  \frac{\sigma(Q)^{\frac{1}{p'}}w(Q)^{\frac{1}{q}}}
  {\psi_{\theta/2}(Q)\abs{Q}^{1-\frac{\alpha}{n}}}
  \rho_\sigma(Q)^{\frac{1}{p}}
  \epsilon_{p}(\rho_\sigma(Q))
  \rho_w(Q)^{\frac{1}{q'}}
  \epsilon_{q'}(\rho_w(Q)).
\end{align*}
The conclusion in Lemma \ref{L:rs} can be stated as
\begin{align*}
\norm{I_{\alpha}^{\mathcal{Q}_{r}}(\sigma\cdot):L^p(\sigma)\to L^q(w)}
\lesssim 2^{r\theta / 2}[\sigma,w]_{p,q,\alpha}^{(\theta)}. 
\end{align*}
Thus using Lemma \ref{L:rs} 
and the decomposition of $I_\alpha^\mathcal{D}$ we have the following 
theorem 
\begin{theorem}\label{T:twt}
With definitions as above, there holds
\begin{align*}
\norm{L^{-\frac{\alpha}{2}}(\sigma\cdot):L^p(\sigma)\to L^q(w)}
\lesssim [\sigma,w]_{p,q,\alpha}^{(\theta)}.
\end{align*}
\end{theorem}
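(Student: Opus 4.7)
The plan is to assemble the theorem from three ingredients already present in the excerpt: the pointwise/integral domination of $L^{-\alpha/2}$ by the finite sum of dyadic operators $I^{\mathcal{D}}_{\alpha,\theta}$; the ``slab'' decomposition $I^{\mathcal{D}}_{\alpha,\theta} = \sum_{r\geq 0}2^{-r\theta}I_{\alpha}^{\mathcal{Q}_r}$; and the Rahm--Spencer two-weight bound (Lemma~\ref{L:rs}) applied on each slab. The scaling parameter $\theta$ that defines $\mathcal{Q}_r$ has been chosen specifically so that plugging these pieces together produces a convergent geometric sum.

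First I would reduce to estimating a single slab. By the displayed decomposition in the excerpt and the triangle inequality in operator norm, it is enough to establish
\begin{align*}
\norm{I^{\mathcal{Q}_r}_{\alpha}(\sigma\,\cdot\,):L^p(\sigma)\to L^q(w)}
\lesssim 2^{r\theta/2}\,[\sigma,w]^{(\theta)}_{p,q,\alpha}
\end{align*}
and then sum, since $\sum_{r\geq 0}2^{-r\theta}\cdot 2^{r\theta/2}=\sum_{r\geq 0}2^{-r\theta/2}<\infty$. So the whole argument is localized to a fixed slab $\mathcal{Q}_r$, where all cubes satisfy $\widetilde{\psi}_\theta(Q)\simeq 2^{r\theta}$.

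Next I would invoke Lemma~\ref{L:rs} to obtain $\norm{I^{\mathcal{Q}_r}_\alpha(\sigma\,\cdot\,):L^p(\sigma)\to L^q(w)}\lesssim [\sigma,w]_{p,q,\alpha,r}=\sup_{Q\in\mathcal{Q}_r}\beta(Q)$, and compare the two testing constants $\beta(Q)$ and the summand of $[\sigma,w]^{(\theta)}_{p,q,\alpha}$. Writing
\begin{align*}
\beta(Q)=\psi_{\theta/2}(Q)\cdot\frac{\sigma(Q)^{1/p'}w(Q)^{1/q}}{\psi_{\theta/2}(Q)|Q|^{1-\alpha/n}}\rho_\sigma(Q)^{1/p}\epsilon_p(\rho_\sigma(Q))\rho_w(Q)^{1/q'}\epsilon_{q'}(\rho_w(Q)),
\end{align*}
the second factor is uniformly bounded by $[\sigma,w]^{(\theta)}_{p,q,\alpha}$ (taking the supremum over all cubes). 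For $Q\in\mathcal{Q}_r$, Proposition~\ref{L:bigwtsmlwt} and its proof give $\psi_{\theta/2}(Q)\lesssim \widetilde{\psi}_\theta(Q)^{1/2}\simeq 2^{r\theta/2}$ (up to the harmless adjustment of $\theta$ by a constant that is absorbed into the implied constant). This yields $\sup_{Q\in\mathcal{Q}_r}\beta(Q)\lesssim 2^{r\theta/2}[\sigma,w]^{(\theta)}_{p,q,\alpha}$, which is the slab-level bound above.

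The main obstacle I expect is the bookkeeping for $\psi$ versus $\widetilde{\psi}$: Lemma~\ref{L:rs} is stated for a generic collection of dyadic cubes with the classical Muckenhoupt testing constant, while the final theorem uses the Schr\"odinger-adapted constant $[\sigma,w]^{(\theta)}_{p,q,\alpha}$ built from $\psi_{\theta/2}$. The comparison $\psi_\theta\simeq\widetilde{\psi}_{3\theta}$ from Proposition~\ref{L:bigwtsmlwt} introduces multiplicative constants in the exponents of $2^r$, and one must verify that the exponent chosen in the slab decomposition (i.e.\ $\theta$ inside $\widetilde{\psi}_\theta$ and $\theta/2$ inside $\psi_{\theta/2}$) really balances against the $2^{-r\theta}$ weight to leave a convergent series. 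Once the correct bookkeeping is in place, assembling the three ingredients is straightforward; everything else is a direct appeal to results that are already stated in the paper.
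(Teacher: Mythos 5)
Your high-level plan is the same as the paper's: dominate $L^{-\alpha/2}$ by the finite family of dyadic operators $I^\mathcal{D}_{\alpha,\theta}$, split the dyadic lattice into the slabs $\mathcal{Q}_r$ on which $\widetilde\psi_\theta\simeq 2^{r\theta}$, apply Lemma~\ref{L:rs} on each slab, and sum a geometric series. That reduction is correct. The gap is in the one line where you control the slab characteristic: you claim $\psi_{\theta/2}(Q)\lesssim\widetilde\psi_\theta(Q)^{1/2}$, calling the passage between $\psi$ and $\widetilde\psi$ a ``harmless adjustment of $\theta$ by a constant that is absorbed into the implied constant.'' This is not correct, and it is exactly the point you flagged earlier as ``the main obstacle I expect.''

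The inequality goes the wrong way. Since $\widetilde\rho(Q)=\sup_{x\in Q}\rho(x)\ge\rho(c_Q)$, one has $\widetilde\psi_\eta(Q)\le\psi_\eta(Q)$ for every $\eta$, so $\widetilde\psi_\theta(Q)^{1/2}=\widetilde\psi_{\theta/2}(Q)\le\psi_{\theta/2}(Q)$; there is no reverse bound without a loss in the exponent. What Proposition~\ref{L:bigwtsmlwt} (and its proof via Lemma~\ref{Lem1: rho}) actually gives is $\psi_\eta(Q)\lesssim\widetilde\psi_{3\eta}(Q)$, hence $\psi_{\theta/2}(Q)\lesssim\widetilde\psi_{3\theta/2}(Q)\simeq 2^{3r\theta/2}$ on $\mathcal{Q}_r$, not $2^{r\theta/2}$. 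That factor of $3$ is not a harmless multiplicative constant: it multiplies the exponent of $2^r$, and then
\begin{align*}
\sum_{r\ge 0} 2^{-r\theta}\sup_{Q\in\mathcal{Q}_r}\beta(Q)
\lesssim [\sigma,w]^{(\theta)}_{p,q,\alpha}\sum_{r\ge 0} 2^{-r\theta}\,2^{3r\theta/2}
=[\sigma,w]^{(\theta)}_{p,q,\alpha}\sum_{r\ge 0}2^{r\theta/2}
\end{align*}
diverges. To repair the argument, recall that the dyadic domination derived at the start of Section~\ref{s:fio} holds for an arbitrary exponent $\phi>0$, so you may take $\phi=3\theta$: $\abs{L^{-\alpha/2}f}\lesssim\sum_j I^{\mathcal{D}_j}_{\alpha,3\theta}f$. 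With the same slabs $\mathcal{Q}_r=\{Q:1+\ell(Q)/\widetilde\rho(Q)\simeq 2^r\}$, the decomposition becomes $I^{\mathcal{D}}_{\alpha,3\theta}\simeq\sum_{r\ge 0}2^{-3r\theta}I^{\mathcal{Q}_r}_\alpha$, and now $\sum_r 2^{-3r\theta}\cdot 2^{3r\theta/2}=\sum_r 2^{-3r\theta/2}<\infty$ closes the argument (equivalently, one could keep $\phi=\theta$ but define $[\sigma,w]^{(\theta)}_{p,q,\alpha}$ with $\psi_{\theta/6}$ or with $\widetilde\psi_{\theta/2}$). The paper's own write-up is terse and does not spell out this bookkeeping either, but a correct proof must make one of these adjustments.
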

See other results in \cite{RahSpe2015,Cru2015,CruMoe2013} to deduce similar 
two weight results in the present setting.  

The condition $[\sigma,w]_{p,q,\alpha}^{(\theta)}$ may seem to be 
complicated beyond the point of usability. Conditions like this are 
known as ``bump'' conditions. These bump conditions were introduced in 
\cite{TreVol2015} and studied more in \cites{LacSpe2015,RahSpe2015} and 
are typically smaller than other bump conditions such as Orlicz norms (this 
was shown by Treil and Volberg in \cite{TreVol2015}). For more information 
about two weight inequalities for the fractional integral operator, 
see \cites{Cru2015,CruMoe2013}.

Theorem \ref{T:twt} has a deficiency. The quantity $\rho_w(Q)$ is related
to the $A_\infty$ characteristic of a weight. In particular, 
$[w]_{A_\infty}:=\sup_{Q}\rho_w(Q)$. This is an important characteristic in 
the classical weighted theory. However, it is too large to capture enough 
information for weights in our classes. It will be interesting to develop 
an $A_\infty$ theory adapted to the operator $-\Delta + V$.

\bigskip
\noindent{\bf Acknowledgement: } The authors would like to thank Julian Bailey in Australian National University for 
pointing out errors in Section \ref{s:wtdthry} and to the statement of Theorem \ref{T:bmoas}.

\begin{bibsection}
\begin{biblist}

\bib{A}{article}{
author={Auscher, Pascal},
title={On necessary and sufficient conditions for $L^p$-estimates of Riesz transforms associated to elliptic operators on $\mathbb R^n$ and related estimates},
journal={Mem. Amer. Math. Soc.},
volume={186},
date={2007}
number={871},
pages={xviii+75 pp},

}

%\bib{BonHarSal2011}{article}{
%   author={Bongioanni, B.},
%   author={Harboure, E.},
%   author={Salinas, O.},
%   title={Classes of weights related to Schr\"odinger operators},
%   journal={J. Math. Anal. Appl.},
%   volume={373},
%   date={2011},
%   number={2},
%   pages={563--579}
%}

\bib{Bez2008}{article}{
   author={Beznosova, Oleksandra V.},
   title={Linear bound for the dyadic paraproduct on weighted Lebesgue space
   $L_2(w)$},
   journal={J. Funct. Anal.},
   volume={255},
   date={2008},
   number={4},
   pages={994--1007}
}

\bib{BGS}{article}{
    author={Bongioanni},
    author={Harboure},
    author={Salinas},
    title={Classes of weights related to Schr\"odinger operators},
    journal={J. Math. Anal. Appl.},
    volume={373},
    date={2011},
    number={2},
    pages={563--579},
}

\bib{BGS1}{article}{
    author={Bongioanni},
    author={Harboure},
    author={Salinas},
    title={Weighted inequalities for commutators of schr\"odinger-Riesz transforms},
    journal={J. Math. Anal. Appl.},
    volume={392},
    date={2012},
    number={1},
    pages={6--22},
}

\bib{Buckley1993}{article}{
   author={Buckley, Stephen M.},
   title={Estimates for operator norms on weighted spaces and reverse Jensen
   inequalities},
   journal={Trans. Amer. Math. Soc.},
   volume={340},
   date={1993},
   number={1},
   pages={253--272}
}

\bib{CD}{article}{
author={Coulhon, Thierry},
author={Duong, Xuan Thinh},
title={Riesz transforms for $1\leq p\leq 2$},
journal={Trans. Amer. Math. Soc.},
volume={351}
date={1999}, 
number={3},
pages={1151--1169},
}

\bib{Cru2015}{article}{
    author={Cruz-Uribe, David},
    title={Two weight norm inequalities for 
    fractional integral operators and commutators},
    date={2015},
    eprint={http://arxiv.org/abs/1412.4157}
}

\bib{CruMarPer2011}{book}{
   author={Cruz-Uribe, David V.},
   author={Martell, Jos{\'e} Maria},
   author={P{\'e}rez, Carlos},
   title={Weights, extrapolation and the theory of Rubio de 
   Francia},
   series={Operator Theory: Advances and Applications},
   volume={215},
   publisher={Birkh\"auser/Springer Basel AG, Basel},
   date={2011},
   pages={xiv+280}
}

\bib{CruMoe2013}{article}{
   author={Cruz-Uribe, David},
   author={Moen, Kabe},
   title={One and two weight norm inequalities for Riesz 
   potentials},
   journal={Illinois J. Math.},
   volume={57},
   date={2013},
   number={1},
   pages={295--323}
}

\bib{DraGraPerPet2005}{article}{
    author={Dragicevi\'c, Oliver},
    author={Grafakos, Loukas},
    author={Pereyra, Mar{\'{\i}}a Cristina},
    author={Petermichl, Stefanie},
    title={Extrapolation and sharp norm estimates for classical operators on
    weighted Lebesgue spaces},
    journal={Publ. Mat.},
    volume={49},
    date={2005},
    number={1}
}

\bib{Duo2001}{book}{
   author={Duoandikoetxea, Javier},
   title={Fourier analysis},
   series={Graduate Studies in Mathematics},
   volume={29},
   note={Translated and revised from the 1995 Spanish original by David
   Cruz-Uribe},
   publisher={American Mathematical Society, Providence, RI},
   date={2001},
   pages={xviii+222},
}

\bib{DM}{article}{
author={Duong, Xuan Thinh}, 
author={MacIntosh, Alan},
title={Singular integral operators with non-smooth kernels on irregular domains},
journal={Rev. Mat. Iberoamericana},
volume={15},
date={1999}, 
number={2}, 
pages={233--265},
}

\bib{DY1}{article}{
author={Duong, Xuan Thinh},
author={Yan, Lixin},
title={New function spaces of BMO type, the John-Nirenberg inequality, interpolation, and applications},
journal={Comm. Pure Appl. Math.},
volume={58},
date={2005},
number={10},
pages={1375--1420},

}

\bib{DY2}{article}{
author={Duong, Xuan Thinh},
author={Yan, Lixin},
title={Duality of Hardy and BMO spaces associated with operators with heat kernel bounds},
journal={J. Amer. Math. Soc.},
volume={18},
date={2005}, 
number={4}, 
pages={943--973},
}

\bib{DZ1}{article}{
 author={Dziuba\'nski},
 author={Zienkiewicz}, 
 title={Hardy space $H^1$ associated to Schr\"odinger operator with potential satisfying reverse H\"older inequality}, 
 journal={Rev. Mat. Iberoamericana},
 volume={15} 
 date={1999}, 
 number={2}, 
 pages={279--296},
}

\bib{DZ2}{article}{
 author={Dziuba\'nski},
 author={Zienkiewicz}, 
 title={$H^p$ spaces for Schr\"odinger operators, Fourier analysis and related topics (Bedlewo, 2000)}, 
 journal={Banach Center Publ., Polish Acad. Sci., Warsaw},
 volume={56} 
 date={2002}, 
 pages={45--53},
}

\bib{DZ3}{article}{
 author={Dziuba\'nski},
 author={Zienkiewicz}, 
 title={$H^p$ spaces associated with Schr\"odinger operators with potentials from reverse H\"older classes}, 
 journal={Colloq. Math.},
 volume={98} 
 date={2003},
 number={1}, 
 pages={5--38},
}

\bib{Gra2004}{book}{
   author={Grafakos, Loukas},
   title={Classical and modern Fourier analysis},
   publisher={Pearson Education, Inc., Upper Saddle River, NJ},
   date={2004},
   pages={xii+931}
}

\bib{HMar}{article}{
author={Hofmann, Steve},
author={Martell, Jos\'e Mar\'ia},
title={$A^\infty$ estimates via extrapolation of Carleson measures and applications to divergence form elliptic operators},
journal={Trans. Amer. Math. Soc.},
volume={364}
date={2012},
number={1},
pages={65--101},

}

\bib{HMay}{article}{
author={Hofmann, Steve},
author={Mayboroda, Svitlana},
title={Hardy and BMO spaces associated to divergence form elliptic operators},
journal={Math. Ann.},
volume={344},
date={2009},
number={1}, 
pages={37--116},
}

\bib{HMM}{article}{
author={Hofmann, Steve},
author={Mayboroda, Svitlana},
author={McIntosh, Alan},
title={Second order elliptic operators with complex bounded measurable coefficients in $L^p$, Sobolev and Hardy spaces},
journal={Ann. Sci. \'Ec. Norm. Sup\'er.},
volume={44}
date={2011}
number={5},
pages={723--800},
} 

\bib{Hyt2012}{article}{
   author={Hyt{\"o}nen, Tuomas P.},
   title={The sharp weighted bound for general Calder\'on-Zygmund operators},
   journal={Ann. of Math. (2)},
   volume={175},
   date={2012},
   number={3},
   pages={1473--1506}
}

\bib{KalVer1999}{article}{
   author={Kalton, N. J.},
   author={Verbitsky, I. E.},
   title={Nonlinear equations and weighted norm inequalities},
   journal={Trans. Amer. Math. Soc.},
   volume={351},
   date={1999},
   number={9},
   pages={3441--3497}
}

\bib{LacMoePerTor2010}{article}{
   author={Lacey, Michael T.},
   author={Moen, Kabe},
   author={P{\'e}rez, Carlos},
   author={Torres, Rodolfo H.},
   title={Sharp weighted bounds for fractional integral operators},
   journal={J. Funct. Anal.},
   volume={259},
   date={2010},
   number={5},
   pages={1073--1097}
 }
 
 \bib{LacPetReg2010}{article}{
   author={Lacey, Michael T.},
   author={Petermichl, Stefanie},
   author={Reguera, Maria Carmen},
   title={Sharp $A_2$ inequality for Haar shift operators},
   journal={Math. Ann.},
   volume={348},
   date={2010},
   number={1},
   pages={127--141}
}
 
\bib{LacSawUri2009}{article}{
  author={Lacey, Michael T.},
  author={Sawyer, Eric T.},
  author={Uriarte-Tuero, Ignacio},
  title={Two-weight Inequalities for Discrete Positive
    Operators},
  eprint={http://arxiv.org/abs/0911.3437},
  date={2009}
}

\bib{LacSpe2015}{article}{
   author={Lacey, Michael T.},
   author={Spencer, Scott},
   title={On entropy bumps for Calder\'on-Zygmund operators},
   journal={Concr. Oper.},
   volume={2},
   date={2015},
   pages={47--52}
}
 
\bib{Moe2012}{article}{
   author={Moen, Kabe},
   title={Sharp weighted bounds without testing or extrapolation},
   journal={Arch. Math. (Basel)},
   volume={99},
   date={2012},
   number={5},
   pages={457--466}
}

\bib{MucWhe1971}{article}{
   author={Muckenhoupt, Benjamin},
   author={Wheeden, Richard L.},
   title={Weighted norm inequalities for singular and fractional integrals},
   journal={Trans. Amer. Math. Soc.},
   volume={161},
   date={1971},
   pages={249--258}
}

\bib{Pet2007}{article}{
   author={Petermichl, S.},
   title={The sharp bound for the Hilbert transform on weighted Lebesgue
   spaces in terms of the classical $A_p$ characteristic},
   journal={Amer. J. Math.},
   volume={129},
   date={2007},
   number={5},
   pages={1355--1375}
}

\bib{Pet2008}{article}{
   author={Petermichl, Stefanie},
   title={The sharp weighted bound for the Riesz transforms},
   journal={Proc. Amer. Math. Soc.},
   volume={136},
   date={2008},
   number={4},
   pages={1237--1249}
}

\bib{PetVol2002}{article}{
   author={Petermichl, Stefanie},
   author={Volberg, Alexander},
   title={Heating of the Ahlfors-Beurling operator: weakly quasiregular maps
   on the plane are quasiregular},
   journal={Duke Math. J.},
   volume={112},
   date={2002},
   number={2},
   pages={281--305}
}

\bib{PotReg2013}{article}{
   author={Pott, Sandra},
   author={Reguera, Maria Carmen},
   title={Sharp B\'ekoll\'e estimates for the Bergman projection},
   journal={J. Funct. Anal.},
   volume={265},
   date={2013},
   number={12},
   pages={3233--3244}
}

\bib{RahSpe2015}{article}{
   author={Rahm, Robert},
   author={Spencer, Scott},
   title={Entropy bump conditions for fractional maximal and integral
   operators},
   journal={Concr. Oper.},
   volume={3},
   date={2016},
   pages={112--121}
}

\bib{Saw1984}{article}{
   author={Sawyer, Eric},
   title={A two weight weak type inequality for fractional integrals},
   journal={Trans. Amer. Math. Soc.},
   volume={281},
   date={1984},
   number={1},
   pages={339--345}
}

\bib{Saw1988}{article}{
   author={Sawyer, Eric T.},
   title={A characterization of two weight norm inequalities for fractional
   and Poisson integrals},
   journal={Trans. Amer. Math. Soc.},
   volume={308},
   date={1988},
   number={2},
   pages={533--545}
}

\bib{Sh} {article}{
author={Shen, Zhongwei},
title={$L^p$ estimates for Schr\"odinger operators with certain potentials},
journal={Ann. Inst. Fourier (Grenoble)},
volume={45}, 
date={1995}, 
pages={513--546}
}

\bib{Tang2011}{article}{
   author={Tang, Lin},
   title={Weighted norm inequalities for Schr\"odinger type operators},
   journal={Forum Math.},
   volume={27},
   date={2015},
   number={4},
   pages={2491--2532}
}

\bib{TreVol2015}{article}{
   author={Treil, Sergei},
   author={Volberg, Alexander},
   title={Entropy conditions in two weight inequalities for singular
   integral operators},
   journal={Adv. Math.},
   volume={301},
   date={2016},
   pages={499--548}
}

\end{biblist}
\end{bibsection}

\end{document}